\numberwithin{equation}{section}
\pgfplotsset{compat=1.15}
\newlist{legal}{enumerate}{10}
\setlist[legal]{label*=\arabic*.}
\newif\ifproofread
\DeclarePairedDelimiter\abs{\lvert}{\rvert}
\DeclarePairedDelimiter\norm{\lVert}{\rVert}
\newcommand*\bigcdot{\mathpalette\bigcdot@{.5}}
\newcommand*\bigcdot@[2]{\mathbin{\vcenter{\hbox{\scalebox{#2}{$\m@th#1\bullet$}}}}}
\DeclarePairedDelimiter\ceil{\lceil}{\rceil}
\newtheorem*{Lemma*}{Lemma}
\newtheorem*{Corollary*}{corollary}
\newtheorem*{theorem*}{Theorem}
\newtheorem*{definition*}{Definition}
\newtheorem{theorem}{Theorem}[section]
\newtheorem{definition}[theorem]{Definition}
\newtheorem{construction}[theorem]{Construction}
\newtheorem{lemma}[theorem]{Lemma}
\newtheorem{corollary}[theorem]{Corollary}
\newtheorem{claim}[theorem]{Claim}
\newtheorem{conj}[theorem]{Conjecture} 
\newtheorem{question}[theorem]{Open Question} 
\declaretheoremstyle[
  headfont=\normalfont\bfseries,
  numbered=unless unique,
  bodyfont=\normalfont,
  spaceabove=1em plus 0.75em minus 0.25em,
  spacebelow=1em plus 0.75em minus 0.25em,
]{hartending}
\declaretheorem[
  style=hartending,
  title=Remark,
  sibling=theorem,
]{remark}
\newcommand{\thistheoremname}{}
\newtheorem*{genericthm}{\thistheoremname}
\newcommand{\EE}{\mathbb{E}}
\newcommand{\NN}{\mathbb{N}}
\newcommand{\PP}{\mathbb{P}}
\newcommand{\QQ}{\mathbb{Q}}
\newcommand{\RR}{\mathbb{R}}
\newcommand{\ZZ}{\mathbb{Z}}
\newcommand{\cK}{\mathcal{K}}
\newcommand{\cO}{\mathcal{O}}
\newcommand{\stab}{\operatorname{stab}}
\newcommand{\diag}{\operatorname{diag}}
\newcommand{\bd}{{\rm d}}
\newcommand{\on}[1]{\operatorname{#1}}
\newcommand{\cov}[1]{\operatorname{cov}(#1)}
\newcommand{\bra}{\left\langle}
\newcommand{\ket}{\right\rangle}
\newcommand{\einv}{\mathcal M(X_n)^A_e}
\newcommand{\cinv}{\mathcal M(X_n)^A_c}
\newcommand{\pid}{\mathrel{\ooalign{$\lneq$\cr\raise.22ex\hbox{$\lhd$}\cr}}}
\DeclareMathOperator{\supp}{supp}
\title{Tori Approximation of Families of Diagonally Invariant Measures}
\date{}
\author{
    Omri Nisan Solan \footnote{\texttt{omrinisan.solan@mail.huji.ac.il}}
    \and 
    Yuval Yifrach\footnote{\texttt{yuval@campus.technion.ac.il}}
}
\begin{document}
\maketitle


\begin{abstract}
   Let $A$ be the full diagonal group in $\on{SL}_{n}(\RR)$. We study possible limits of Haar measures on periodic $A$-orbits in the space of unimodular lattices $X_n$. 

  We prove the existence of 
  non-ergodic measures which are also weak limits of these compactly supported $A$-invariant measures. 
  In fact, given any countably many $A$-invariant ergodic measures, we show that there exists a sequence of Haar measures on periodic $A$-orbits such that the ergodic decomposition of its weak limit has these measures as factors with positive weight. 
   In particular, we prove that any compactly supported $A$-invariant and ergodic measure is the weak limit of the restriction of different compactly supported periodic measures to a fixed proportion of the time.
   

  In addition, for any $c\in (0,1]$ we find a sequence of Haar measures on periodic $A$ orbits that converges weakly to $cm_{X_n}$ where $m_{X_n}$ denotes the Haar measure on $X_n$. In particular, we prove the existence of partial escape of mass for Haar measures on periodic $A$ orbits. These results give affirmative answers to questions posed by Shapira in ~\cite{ShapiraEscape}. Our proofs are based on a modification of Shapira's proof in ~\cite{ShapiraEscape} and on a generalization of a construction of Cassels, as well as on effective equidistribution estimates of Hecke neighbors by Clozel, Oh and Ullmo in \cite{HeeOhHeckeOld}.
\end{abstract}
\section{Introduction}
Let $(X_n, d_{X_n})$ denote the metric space of unimodular lattices in $\RR^n$ and let $A < \on{SL}_n(\RR)$ denote the subgroup of diagonal matrices with positive diagonal inside the group of real matrices with determinant 1. Among the ergodic $A$-invariant probability measures on $X_n$, those which are supported on compact orbits stand out due to their connection to number theory. 
Indeed, there is a surjective map between the set of full $\ZZ$-modules inside totally real number fields and the set of compact $A$-orbits in $X_n$. For any totally real degree $n$ number field $K$, let $\sigma_i: K\hookrightarrow\RR;i=1,\dots,n$ be some ordering of the natural embeddings of $K$ and let $\sigma=(\sigma_1,\dots,\sigma_n): K\rightarrow\RR^n$ denote their concatenation. The compact orbit corresponding to a full module $M\subset K$ is then the $A$-orbit of the unimodular lattice $\operatorname{cov}(\sigma(M))^{-1/n}\sigma(M)$ where $\on{cov}(\Lambda)$ denotes the co-volume of a lattice $\Lambda$. This orbit is indeed compact, and every compact $A$-orbit is given in this manner.
Denote:

\begin{equation*}
    \mathcal M(X_n)=\{\text{finite measures on $X_n$}\};	
\end{equation*}
\begin{equation*}
    \mathcal M(X_n)^A_e=\{\text{ergodic }A\text{-invariant probability measures on $X_n$}\};
\end{equation*}
\begin{equation*}
    \mathcal M(X_n)^A_c=\{\text{ergodic }A\text{-invariant probability measures on $X_n$ supported on compact orbits}\}.
\end{equation*}
We endow $\mathcal M(X_n)$ with the weak-$\ast$ topology and study the closure $\overline{\cinv}$ in it. 
It is known that apart from measures of compact orbits, $m_{X_n}, 0\in \mathcal M(X_n)$. The Haar measure is obtained by applying ergodicity to the result of Benoist and Oh \cite{benoist2007equidistribution}, or more directly using Shapira and Zheng's \cite{shapira2021translates}. The trivial measure is obtained by Shapira \cite{ShapiraEscape}. 
In this paper, we show that $\overline{\cinv}$ contains several families of natural and important measures, thus answering positively the discussion in \cite[\S 1.5]{ELMV1} and negatively Conjecture \cite[Conjecture 1.11]{ELMV1}. 

\subsection{Measures with Predetermined Ergodic Factors}
The first family of measures we consider contains non-ergodic measures. Given any finitely many elements $\nu_1,\dots,\nu_k\in \einv$, we find a measure in $\mu\in \overline{\cinv}$ such that the ergodic decomposition of $\mu$ has positive weight on each of the $\nu_i$'s for $i=1,\dots,k$. In ~\cite[p.4 Q1, Q2]{ShapiraEscape}, Shapira asks whether non-ergodic measures and measures with a given periodic $A$-invariant ergodic factor can be obtained as weak limits of $\cinv$ elements. The following Theorem shows in particular that this is indeed the case:
\begin{theorem}\label{thm: ergodicDecomp}
    Let $(\nu_i)_{i=1}^N \subseteq \einv$ be a possibly infinite sequence of $A$-invariant ergodic measures. Then there exists $\mu\in \overline{\cinv}$ such that the ergodic decomposition of $\mu$ has positive weights on the $\nu_i$'s. In particular for any $\mu\in \cinv$ there exists $\tilde{\mu}\in \overline{\cinv\setminus\{\mu\}}$ such that the ergodic decomposition of $\tilde \mu$, contains $\mu$ with positive weight. 
\end{theorem}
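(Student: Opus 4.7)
The plan is to build a sequence of compact $A$-orbits whose normalized Haar measures weakly converge to a measure $\mu$ whose ergodic decomposition contains each $\mu_i$ with positive weight. First I reduce to approximating orbit pieces via the pointwise ergodic theorem: for each $i$, choose a $\mu_i$-generic point $y_i \in X_n$ and a large box $B_{T_i} \subset A$ so that the empirical measure $\frac{1}{|B_{T_i}|}\int_{B_{T_i}} \delta_{ay_i}\, da$ approximates $\mu_i$ in the weak-$*$ topology to any prescribed precision. Thus, approximating $\mu_i$ by a piece of a compact orbit is the same problem as approximating an orbit piece of this explicit form.

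Next, I would apply the main technical result of the paper (approximating any portion of any $A$-orbit using a fixed proportion of a compact $A$-orbit), in a \emph{simultaneous} form for $k$ prescribed pieces at once. Concretely, I want to produce, for every $\epsilon>0$, a compact $A$-orbit with periodic probability measure $\nu_\epsilon$ and $k$ disjoint subsets $E_1,\dots,E_k$ of its fundamental domain, each of normalized Haar measure at least $c_i>0$ (with $c_i$ independent of $\epsilon$), such that the induced empirical measure on $X_n$ coming from $E_i$ is $\epsilon$-close to $\mu_i$. I expect this to be the main obstacle: the underlying construction of the compact torus (via the Cassels-type construction and the effective Hecke-neighbour equidistribution of Clozel--Oh--Ullmo mentioned in the abstract) must be shown to be flexible enough to accommodate all $k$ segments in a single compact orbit without collapse of their mass fractions. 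This is plausibly achieved by feeding the pieces into the basic construction sequentially, enlarging the compact orbit at each stage while preserving the previously arranged proportions on the nose.

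Assuming such $\nu_\epsilon$ exist, I would take $\epsilon=1/n$ and pass to a weak-$*$ limit along a subsequence, using compactness of $\{\nu\in\mathcal M(X_n):\nu(X_n)\le 1\}$, to obtain $\mu=\lim_n\nu_{1/n}\in\overline{\cinv}$. By construction $\nu_{1/n}\ge \sum_{i=1}^{k}c_i\lambda_{i,n}$, where $\lambda_{i,n}$ is the normalized empirical measure on $E_i$ and $\lambda_{i,n}\to\mu_i$, so the limit satisfies $\mu\ge\sum_{i=1}^{k}c_i\mu_i$. Since any two distinct $A$-invariant ergodic measures are mutually singular, in the ergodic decomposition $\mu=\int\nu\,d\tau(\nu)$ the atom $\tau(\{\mu_i\})$ is forced to be at least $c_i>0$, which gives the required positive weight on each $\mu_i$.

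For the second assertion, given $\mu\in\cinv$ I apply the first part with $k=2$, $\mu_1=\mu$ and $\mu_2$ any other element of $\einv$ (for instance the Haar measure $m_{X_n}$, which lies in $\einv$). The limit $\tilde\mu$ then has ergodic decomposition containing both $\mu$ and $\mu_2$ with positive weight; in particular $\tilde\mu\ne\mu$. Moreover, the compact orbits produced by the Cassels-type construction can be chosen of arbitrarily large volume, so the sequence may be taken disjoint from the single compact orbit supporting $\mu$, realising $\tilde\mu\in\overline{\cinv\setminus\{\mu\}}$ as claimed.
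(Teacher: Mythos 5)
Your overall architecture (generic points, approximating orbit pieces by a fixed proportion of a compact orbit, passing to a weak-$*$ limit, and using mutual singularity of distinct ergodic measures to force positive weights in the ergodic decomposition) matches the paper, and your handling of the final limit argument and of the second assertion is sound. However, there is a genuine gap exactly at the step you yourself flag as "the main obstacle": producing a \emph{single} compact orbit containing $k$ disjoint regions of fixed positive proportion, each shadowing a prescribed orbit piece. The mechanism you propose --- "feeding the pieces into the basic construction sequentially, enlarging the compact orbit at each stage while preserving the previously arranged proportions" --- is not viable: compact $A$-orbits are rigid objects parametrized by modules in totally real number fields, and the approximation machinery (Lemma \ref{lem: fastApproximation2}, built from the special number field of Theorem \ref{thm: special field} and the Hecke equidistribution of Theorem \ref{thm: equidistributionOfHecke}) only guarantees closeness of the compact orbit to the orbit of \emph{one} prescribed point $y$, hence shadowing of a single ball in $Ay$. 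There is no operation that "enlarges" a compact orbit while keeping a previously arranged segment in place.

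The paper resolves this in the opposite order. It first glues the $k$ generic orbit pieces into $k$ consecutive boxes along the $A$-orbit of a single (not necessarily periodic) point $x_R$: this is the content of the diagonal closing lemma (Lemma \ref{lem: diagonal closing lemma}), which writes any $x_1\in K$ as $k_Uk_Ak_Lx_0$ with uniformly bounded $k_U,k_A,k_L$, and of Corollary \ref{cor: Gluing boxes}, which exploits the contraction/expansion of $L$ and $U$ under $a_t$ to concatenate two boxed maps with $o_R(1)$ error on all but a negligible margin. Iterating gives Corollary \ref{cor: boxes approximation}. Only then is the compact-orbit approximation applied \emph{once}, to the single point $x_R$, with the box sizes $R_m$ calibrated against the shadowing radius $\alpha t_m$ so that all $k$ boxes inject into the fundamental domain of the periodic orbit. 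This gluing step is the missing idea in your proposal; without it (or an equivalent closing argument) the simultaneous approximation does not follow from the single-measure case.
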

\begin{remark}[The weights in the ergodic decomposition]\label{rem: uniformRN}
    For a single measure $\nu_1$, we get an explicit lower bound on the weight, namely $\mu \ge n\left(\frac{\lfloor n/2\rfloor}{4n^2(n^2-1)}\right)^{n-1}\nu_1$. 
    Generally, the proof fixes a sequence of positive real numbers $(c_i)_{i=1}^N$ with $\sum_{i=1}^N c_i = 1$ and constructs $\mu$ with $\mu \ge \left(\frac{\lfloor n/2\rfloor}{4n^2(n-1)^2(n+1)}\right)^{n-1} c_i^{n-1}\nu_i$. 
\end{remark}
\subsection{Partial Escape of Mass and Entire Mass Approximations}
The next measures we will find in $\overline{\cinv}$ are measures with total mass strictly between $0$ and $1$. In his paper ~\cite[p.4, Q2]{ShapiraEscape}, Shapira proves that the zero measure is contained in $\overline{\cinv}$ and poses the question of whether measures with total mass strictly between $0$ and $1$ also lie there. 
In \cite{david2018dirichlet} David and Shapira find a sequence of orbits with a lower bound on the escape of mass but did not prove that the limit is not zero. 
We answer Shapira's question in the affirmative in two ways. The first is the following corollary of Theorem \ref{thm: ergodicDecomp}:
\begin{corollary}\label{cor: tailoredEscape}
	For every sequence $\nu_1,\nu_2, \dots, \nu_k,\ldots\in \einv$ there exists $\mu\in \overline{\cinv}$ such that $\mu(X_n)\in (0,1)$ and the ergodic decomposition of $\mu$ has positive weights on $\nu_i$ for all $i$.
\end{corollary}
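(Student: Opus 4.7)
The plan is to combine the countable version of Theorem \ref{thm: ergodicDecomp} with the partial-escape-of-mass construction that the paper develops elsewhere. The theorem alone produces a measure in $\overline{\cinv}$ whose ergodic decomposition has positive weight on each $\mu_i$; what remains is to arrange that the total mass lands strictly between $0$ and $1$.

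Concretely, I would first apply the countable version of Theorem \ref{thm: ergodicDecomp} to the sequence $(\mu_i)_{i\ge 1}$ with any positive weights $c_i$ summing to one. This gives a sequence $\tau_k \in \cinv$ converging weakly to some $\mu \in \overline{\cinv}$ satisfying $\mu \ge \Theta(c_i^{n-1})\mu_i$ for all $i$. In particular $\mu(X_n) \ge \Theta(c_1^{n-1}) > 0$, and since each $\tau_k$ is a probability measure, $\mu(X_n) \le 1$. If $\mu(X_n) < 1$ then the corollary is already established. Otherwise, I would modify the construction to introduce controlled escape of mass: in the proof of Theorem \ref{thm: ergodicDecomp} one builds a compact $A$-orbit for each $k$ and partitions it into sub-arcs shadowing generic trajectories of the target $\mu_i$'s; I would reserve an additional sub-arc occupying a fixed proportion $\delta \in (0,1)$ of the orbit and engineer that sub-arc to escape to the cusp of $X_n$, via the same mechanism Shapira \cite{ShapiraEscape} used to realize the zero measure. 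The modified $\tau_k$'s continue to shadow the $\mu_i$'s on the remaining $1-\delta$ proportion of each orbit, so the weak-$\ast$ limit $\mu$ still satisfies $\mu \ge (1-\delta)\Theta(c_i^{n-1})\mu_i$ while now $\mu(X_n) \le 1-\delta < 1$.

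The main obstacle is showing that a partial-escape sub-arc can be grafted onto the approximating compact orbits without spoiling the approximations on the other sub-arcs. This requires the same tori-approximation and effective-equidistribution tools that drive Theorem \ref{thm: ergodicDecomp}: one must decompose each compact $A$-orbit into sub-arcs of prescribed proportions that behave essentially independently, with gluing errors vanishing as $k \to \infty$. The flexibility of the Cassels-type number-field construction the paper develops, combined with effective Hecke-neighbor equidistribution, should provide exactly this freedom and allow the simultaneous control of approximation and escape.
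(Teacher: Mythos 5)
Your overall strategy --- combine Theorem \ref{thm: ergodicDecomp} with Shapira's escape-of-mass construction --- is the right one, but as written the proof has a genuine gap at exactly the point you flag as ``the main obstacle.'' You propose to graft an escaping sub-arc onto the approximating compact orbits and then assert that the tori-approximation and Hecke-equidistribution machinery ``should provide exactly this freedom.'' That grafting is not carried out, and carrying it out would amount to redoing the proof of Theorem \ref{thm: ergodicDecomp} with an extra moving part. Also, your first step (apply the countable version and hope that $\mu(X_n)<1$ already) gives no control: nothing in Theorem \ref{thm: ergodicDecomp} forces strict mass loss, so the fallback branch is the whole proof.

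The observation that closes the gap, and which the paper uses, is that no re-engineering of the construction is needed: Shapira's measures $\nu_m\in\cinv$ with $\nu_m\to 0$ weakly are themselves ergodic $A$-invariant probability measures, i.e.\ legitimate \emph{targets} for Theorem \ref{thm: ergodicDecomp}. One applies the theorem (for each fixed $m$) to the finite collection $\{\nu_m,\mu_1,\dots,\mu_k\}$, obtaining $\mu^{(m)}\in\overline{\cinv}$ whose ergodic decomposition contains each of $\nu_m,\mu_1,\dots,\mu_k$ with weight bounded below uniformly in $m$ (by Remark \ref{rem: uniformRN}). The ``escaping sub-arc'' you wanted to engineer is then automatic: it is the portion of the orbit shadowing a generic point of $\nu_m$, which sits high in the cusp precisely because $\nu_m$ does. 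Passing to a weak limit $\mu^{(\infty)}$ as $m\to\infty$, the $\nu_m$-component carries a fixed positive proportion of mass to infinity, so $\mu^{(\infty)}(X_n)\le 1-\Theta(k^{2-n})<1$, while the lower bounds $\mu^{(m)}\ge \Theta(k^{2-n})\mu_i$ survive the limit since each $\mu_i$ is a fixed probability measure. This turns your two-stage plan (approximate first, then modify) into a one-stage application of the theorem you already have, and it is the step your write-up is missing.
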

The next family of measures we find inside the closure of the compactly supported periodic measures is the family $\{cm_{X_n}: c\in (0,1]\}$ where $m_{X_n}$ is the Haar probability measure on $X_n$. This family of measures is substantially different from the measure families we approximated so far since we can describe their entire ergodic decomposition (which is simply $cm_{X_n}$) rather than describe some of the factors and have no control over the remaining factors. 
In \cite{shapira2021translates}, they show that $m_{X_n}\in \overline{\cinv}$. 
Here we prove a more general theorem.
\begin{theorem}\label{thm: haarApproximation}
	Let $c\in (0,1]$. Then $cm_{X_n}\in \overline{\cinv}$.
\end{theorem}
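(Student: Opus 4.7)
\bigskip
\noindent\textbf{Proof proposal.}
The plan is to combine three ingredients. First, the work of Shapira and Zheng \cite{shapira2021translates} produces a sequence of compact $A$-orbits whose associated periodic measures $\mu_N^{(\mathrm{H})}$ converge weakly to $m_{X_n}$. Second, Shapira's construction in \cite{ShapiraEscape} produces a sequence of compact $A$-orbits whose periodic measures $\mu_N^{(\mathrm{esc})}$ converge weakly to the zero measure, exhibiting complete escape of mass. Third, the central tori-approximation technique of this paper lets one build compact $A$-orbits whose time parameter contains a prescribed block of positive density approximating any chosen segment of any other $A$-orbit.

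Given $c\in(0,1]$, the strategy is to construct, for each sufficiently large $N$, a single compact $A$-orbit $\tilde O_N$ whose periodic measure $\tilde\mu_N$ decomposes, via the natural time parameterization of $\tilde O_N$ by $A$ (modulo the stabilizer of a base point), into two asymptotic blocks: a block of relative length $c$ made to shadow the orbit supporting $\mu_N^{(\mathrm{H})}$, and a block of relative length $1-c$ made to shadow the orbit supporting $\mu_N^{(\mathrm{esc})}$. This amounts to applying the approximation theorem with, as the target "reference trajectory", the concatenation of the two reference orbit segments in proportions $c$ and $1-c$.

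Passing to the weak limit along $N$, the contribution of the first block to $\tilde\mu_N$ converges to $c\,m_{X_n}$ (because $\mu_N^{(\mathrm{H})}\to m_{X_n}$ and the tori-approximation is close in the weak topology on any compact set), while the contribution of the second block vanishes weakly (because $\mu_N^{(\mathrm{esc})}\to 0$). Consequently $\tilde\mu_N \to c\,m_{X_n}$, proving $c\,m_{X_n}\in\overline{\cinv}$.

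The main obstacle is the simultaneous realization demanded in the construction of $\tilde O_N$: since compact $A$-orbits correspond to full modules in totally real degree-$n$ number fields, the two prescribed blocks are coupled through the arithmetic of a single number field rather than chosen independently. This is addressed via the paper's generalization of Cassels' construction together with the construction of a special totally real number field, which together produce fields whose unit group admits a fundamental domain splitting compatible with the $c{:}(1-c)$ time decomposition. The effective equidistribution of Hecke neighbours of Clozel, Oh and Ullmo then quantifies how quickly the first block approaches $m_{X_n}$ as one refines the construction, enabling this rate to be balanced against the escape rate in the second block; without such effective input one could not guarantee that the limit is exactly $c\,m_{X_n}$ rather than a measure with spurious residual components.
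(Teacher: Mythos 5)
Your approach has a genuine gap at its core. The tori--approximation machinery of this paper (Lemma \ref{lem: fastApproximation2} together with the box--gluing of Corollary \ref{cor: Gluing boxes}) only controls a \emph{fixed small proportion} of the constructed compact orbit: the approximating point $y_k$ satisfies $d(y_k,y)<\exp(-\alpha t_k)$ with $\alpha=\frac{\lfloor n/2\rfloor}{4n(n^2-1)}$, which is far smaller than $n$, while the stabilizer lattice of $y_k$ has inradius $\Theta(t_k)$; hence the shadowing survives only on a box occupying a proportion roughly $\left(\frac{\alpha}{n}\right)^{n-1}$ of the fundamental domain, and the orbit's behaviour on the remaining (majority) proportion of its period is completely uncontrolled. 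This is precisely why Theorem \ref{thm: ergodicDecomp} asserts only that the prescribed measures appear in the ergodic decomposition \emph{with some positive weight}, and why the paper singles out Theorem \ref{thm: haarApproximation} as requiring control of the \emph{entire} ergodic decomposition. Your construction needs the two shadowing blocks to have relative lengths $c$ and $1-c$ summing to the whole orbit; the technique cannot deliver that, so the uncontrolled remainder would contribute spurious ergodic components and you could conclude at best $\mu\ge \Theta(1)\cdot c\, m_{X_n}$, not $\mu=c\,m_{X_n}$. (Full-orbit shadowing is essentially the open question in the "Further Research" section, where the paper argues it cannot be obtained by merely sharpening the bounds.) The effectivity of the Clozel--Oh--Ullmo estimate, which you invoke at the end, improves the quality of the approximation on the controlled block but does nothing to enlarge that block's relative size.

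The paper's actual proof avoids gluing entirely and does not use the special number field of Theorem \ref{thm: special field}. It takes a single Cassels--Shapira orbit $Ax_{\ZZ[\alpha]}$ which spends a density-one proportion of its life in the cusp but whose depth in the cusp, measured by $\tfrac{1}{\log M}\log\lambda_1$, has a continuous limiting distribution (Corollary \ref{cor: quantitive lambda n escape}). A Hecke operator $T_{a_p}$ with $p\approx M^{n\eta}$ is calibrated so that exactly the proportion-$c$ part of the orbit lying above the depth threshold is pulled out of the cusp (Theorem \ref{thm: Hecke in the cusp}), while the deeper $1-c$ part stays in the cusp and escapes; a second operator $T_{a_q}$ equidistributes the extracted part. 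The resulting measure is a convex combination of finitely many compact-orbit measures whose limit in the one-point compactification is $c\,m_{X_n}+(1-c)\delta_{*}$; since this is an ergodic decomposition, almost every component must itself converge to $c\,m_{X_n}+(1-c)\delta_{*}$, which produces a single sequence in $\cinv$ converging to $c\,m_{X_n}$. If you want to pursue your two-ingredient idea, you would need a mechanism that controls the whole orbit, which is exactly what this cusp-plus-Hecke argument supplies and the gluing argument does not.
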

In particular, the above theorem gives another answer to Shapira's question regarding the partial escape of mass. 
\begin{remark}[Volume of the compact orbits]\label{rem: size regulator}
Every compact orbit $Ax$ has a volume $\on{Vol}(Ax)=\on{Vol}(\RR^{n-1}/\log\stab_A(x))$ and a corresponding order $R_x$. The compact orbits we construct for the proof of Theorem \ref{thm: ergodicDecomp} have volume $O(\log \on{Disc}(R_x))$. As written, our proof of Theorem \ref{thm: haarApproximation} doesn't provide estimates on the volumes. 
  In Remark \ref{rem: one Hecke suffices and polynomial dependence in discriminant}, we argue that similarly to the proof of Theorem \ref{thm: haarApproximation}, one can choose the approximating compact orbits $Ax$ to have volume $O(\on{Disc}(R_x)^{c'})$ for some $c'>0$.
\end{remark}
The estimate of the regulator in Remark \ref{rem: size regulator} that provides a sequence of orbits with volume polynomial in the discriminant of the corresponding order and partial escape of mass, contradicts \cite[Conjecture 1.11]{ELMV1}, which states that every such sequence of orbits must converge to an algebraic measure. The spirit of the conjecture is not disproved, and one could hope that the conjecture can be corrected by considering only partial escape of mass. We do not believe this is the case for general orbits.
Instead, we provide the following conjecture:
\begin{conj}\label{conj: big orbit behavior}
    Let $Ax_i\subset X_n$ be a sequence of compact orbits corresponding to orders $R_i$.  Suppose that $\on{Vol}(Ax_i) > \on{disc}(Ax_i)^\varepsilon$ for some $\varepsilon>0$. Here $\on{disc}(Ax_i)$ is the discriminant of the orbit $Ax_i$. It is defined as in \cite{ELMV1}. 
    Then $\lim_{i\to \infty} \mu_{Ax_i}$ is a combination of countably many periodic algebraic orbits of groups $H_j\supsetneq A$, with total mass bounded from below as a function of $\varepsilon$. 
    Moreover, if $K_i = R_i\otimes \QQ$, and $(\on{disc}(R_i) / \on{disc} K_i) < \on{disc}(R_i)^{\delta}$ for some $\delta=\delta(\varepsilon)$, then $\lim_{i\to \infty} \mu_{Ax_i}$ is algebraic. 
    We suppose the same will hold for general arithmetic $G/\Gamma$ with the appropriate generalization of $\on{disc}(R_i) / \on{disc} K_i$. The reader may compare this discussion to a similar discussion carried out in \cite[Corollary 1.7]{ELMV1}.
\end{conj}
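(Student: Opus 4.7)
The conjecture fits the paradigm of measure rigidity for higher-rank diagonalizable actions, so my plan hinges on the Einsiedler--Katok--Lindenstrauss (EKL) classification of positive-entropy $A$-invariant measures on $X_n$. The strategy is to pass to a weak limit $\mu$ along a subsequence, show that $\mu$ has positive entropy for some element of $A$, invoke EKL, and then catalogue the intermediate subgroups $H_j \supsetneq A$ that can support the ergodic components.

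The main technical step is converting the volume hypothesis $\on{Vol}(Ax_i) > \on{disc}(Ax_i)^{\varepsilon}$ into a lower bound on $h_\mu(a)$ for a suitable $a \in A$. I would follow the spirit of \cite{ELMV1}: a compact $A$-orbit of large volume must have long, essentially independent returns in many $A$-directions, and if enough of this mass is retained in the limit, one can extract positive entropy by a Shannon--McMillan type averaging. Concretely, I would fix a compact $K \subset X_n$, cover it by $(a,n,\epsilon)$-Bowen balls, and bound from below the number of such balls that $A x_i \cap K$ must visit; the volume lower bound should force this count to grow at rate at least $\varepsilon \log \on{disc}(Ax_i)$, which, once translated to the weak limit, yields positive entropy on the non-escaping part together with a uniform-in-$\varepsilon$ mass lower bound.

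Once positive entropy is in hand, EKL yields that the non-escaping part of $\mu$ is a countable convex combination of algebraic measures on periodic orbits of closed subgroups $H \supsetneq A$, which (after listing the finitely many conjugacy classes of connected $\RR$-split intermediate subgroups in $\on{SL}_n(\RR)$) gives the conjectured form. For the \emph{moreover} clause, the hypothesis $\on{disc}(R_i)/\on{disc}(K_i) < \on{disc}(R_i)^{\delta}$ rules out the orbits sitting in a proper sub-order of the maximal order of $K_i$, which are exactly the mechanism by which non-algebraic mass could otherwise survive in the limit (compare the discussion in \cite{ELMV1}). Here I expect the Clozel--Oh--Ullmo effective Hecke equidistribution already exploited in this paper to play the role of absorbing residual mass, by expressing each approximate orbit as a Hecke translate of a fixed maximal-order orbit and equidistributing the translates. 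The envisioned generalization to arbitrary arithmetic $G/\Gamma$ would require a suitable analogue of $\on{disc}(R_i)/\on{disc}(K_i)$ and of the Hecke input, but the skeleton should survive.

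The hardest step, by a large margin, is the entropy lower bound. Volume and entropy are not comparable by soft means: one can imagine orbits of large volume that concentrate most of their mass along a single one-parameter subfoliation, contributing nothing to the entropy of transverse directions. Ruling this out requires quantitative input specific to the compact $A$-orbits arising from totally real number fields --- in particular, control on the geometry of the unit log-lattice of $R_i$ and on how $Ax_i$ distributes over the Weyl chamber --- and this is where I expect the bulk of the work, and very likely a genuinely new idea, to be needed.
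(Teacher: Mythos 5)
The statement you are trying to prove is stated in the paper as an open conjecture, not a theorem: the authors offer no proof, and indeed they formulate it precisely because their own constructions (compact orbits with volume polynomial in the discriminant exhibiting partial escape of mass) refute the stronger \cite[Conjecture 1.11]{ELMV1}, so they are proposing a corrected statement they believe but cannot establish. There is therefore nothing in the paper to compare your argument against, and your proposal should be judged as a research programme rather than a proof.

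As a programme it is the natural one --- pass to a weak-$*$ limit, extract positive entropy for some $a\in A$ from the volume hypothesis, and invoke the Einsiedler--Katok--Lindenstrauss classification of positive-entropy ergodic components --- and this is exactly the circle of ideas in \cite{ELMV1}. But the two steps you would actually need are precisely the ones you leave open. First, the passage from $\on{Vol}(Ax_i)>\on{disc}(Ax_i)^{\varepsilon}$ to an entropy lower bound for the limit is only known in the absence of escape of mass (via the separation of orbit points at scale $\on{disc}^{-\star}$ and a Bowen-ball count); with partial escape of mass --- which this paper proves genuinely occurs for such sequences --- the entropy bound degrades, and controlling the trade-off between escaping mass and surviving entropy uniformly in $\varepsilon$ is the actual content of the conjecture. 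You acknowledge this is where ``a genuinely new idea'' is needed, which is an accurate self-assessment but means the proposal does not close the gap. Second, the \emph{moreover} clause is not addressed by any argument: asserting that the sub-order hypothesis ``rules out'' non-algebraic mass, or that Clozel--Oh--Ullmo Hecke equidistribution will ``absorb residual mass,'' is a hope, not a mechanism --- the Hecke operators relate orbits of different orders in the same field, but nothing in your sketch converts that relation into a classification of the limit. In short: the strategy is the right one to have in mind, but the statement remains a conjecture, and your proposal does not advance beyond the heuristic level at which the authors themselves left it.
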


\subsection{Method of Proof}\label{ssec: method of proof}
We will first show a sketch of the proof of Theorem \ref{thm: ergodicDecomp}, 
for the particular case $k=1$ where we approximate a single measure $\mu$.
We will first show how to do that depending on a conjecture. 
Then we will show how to overcome the need to use the conjecture.

\begin{conj}\label{conj: number fields packets equidistributed}
    Let $K$ be a number field. For every $I \le \mathcal{O}_K$, we get a measure $\mu_I$ on a compact orbit associated to $I$ (See Definition \ref{def: inv measures}). 
    The measure $\mu_I$ depends only on the class of $I$ in the class group ${\rm Cl}_{\mathcal{O}_K}$. 
    It is believed that 
    \begin{align}\label{eq: packets}
      \mu_K := \frac{1}{\#{\rm Cl}_{\mathcal{O}_K}} \sum_{I\in {\rm Cl}_{\mathcal{O}_K}} \mu_I \xrightarrow{\on{disc}(K)\to \infty} m_{X_n},
    \end{align}
    where $m_{X_n}$ is the Haar measure on $X_n$, and $\on{disc}(K)$ is the discriminant of $K$.
    Moreover, we expect that $d_{\mathcal{M}^1(X_n)}(\mu_K, m_{X_n}) = O(\on{disc}(K)^{-\star})$, for $\star>0$, where $d_{\mathcal{M}^1(X_n)}$ is the natural metric on the space of probability measures $\mathcal{M}^1(X_n)$. 
\end{conj}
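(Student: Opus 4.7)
\medskip

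\noindent\textbf{Proof proposal.} The plan is to follow the toral-packet strategy of Einsiedler--Lindenstrauss--Michel--Venkatesh, supplemented by a spectral expansion in order to extract the polynomial rate in $\on{disc}(K)$. The first step is non-escape of mass: for every $\varepsilon > 0$, the proportion of $\mu_K$ lying in a fixed compact set of $X_n$ must exceed $1-\varepsilon$ once $\on{disc}(K)$ is large. The natural route is to apply Minkowski's second theorem to $\sigma(M)$ with $M$ running over representatives of $\on{Cl}_{\cO_K}$, controlling the $A$-translates that can carry very short vectors. For $n \le 3$ this is accessible through the geometry of numbers; for larger $n$ it is itself a deep problem. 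Granted non-escape, any weak-$\ast$ limit $\mu_\infty$ is an $A$-invariant probability measure on $X_n$.

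Next I would exhibit additional symmetries of $\mu_\infty$. Since the sum over the class group is invariant under multiplication by any ideal class, $\mu_K$ is preserved by the Hecke correspondence at every rational prime $p$ which splits completely in $K$. By Chebotarev these primes have positive density, so $\mu_\infty$ is recurrent under infinitely many independent Hecke correspondences. Combined with effective estimates on the fibers of such correspondences in the spirit of Clozel--Oh--Ullmo (already invoked in the paper), this produces many near-returns to typical packet points, and hence positive entropy of $\mu_\infty$ along many directions of $A$. Applying the measure-classification theorem of Einsiedler--Katok--Lindenstrauss for positive-entropy $A$-invariant measures on $X_n$, together with Hecke recurrence to rule out intermediate algebraic measures supported on proper subgroup orbits, forces $\mu_\infty = m_{X_n}$.

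For the quantitative rate $O(\on{disc}(K)^{-\star})$ I would pass to the spectral side. Expanding a smooth test function $f$ via the Plancherel decomposition on $\on{SL}_n(\ZZ)\backslash \on{SL}_n(\RR)$, the difference $\int f\,d\mu_K - \int f\,dm_{X_n}$ becomes a sum of toric periods of cuspidal and Eisenstein automorphic forms twisted by characters $\chi$ of $\on{Cl}_{\cO_K}$. Through Waldspurger-type period formulas these periods are controlled by central values $L(1/2, \pi \otimes \chi)$ of Rankin--Selberg type, and a subconvex bound in the discriminant aspect, after a Weyl-law truncation to finitely many $\pi$, delivers the required power saving. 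The principal obstacle, and the reason the statement remains conjectural for $n \ge 3$, is precisely this pair of inputs: a uniform non-escape of mass for toral packets, and an effective subconvex bound in the discriminant aspect for $\on{GL}_n$ Rankin--Selberg $L$-functions. The first is open beyond low rank; the second is presently established only for small $n$, so any genuine proof of the conjecture as stated would require substantial new inputs for at least one of these problems, and it is for this reason that the paper works to bypass the conjecture rather than to prove it.
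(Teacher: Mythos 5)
The statement you were asked to prove is presented in the paper as a \emph{conjecture}, not a theorem: the paper offers no proof of it, explicitly calls it ``very complicated,'' cites Duke for the case $n=2$ and a non-effective version for $n=3$ under splitting hypotheses (Einsiedler--Lindenstrauss--Michel--Venkatesh, via subconvexity for cubic $L$-functions), and then spends the rest of the introduction and Sections 3--4 engineering a route --- special number fields with logarithmically small units combined with the Clozel--Oh--Ullmo effective Hecke equidistribution --- precisely so as to avoid ever invoking it. So there is no proof in the paper to compare yours against, and the correct response to this statement is to identify it as open.

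Your proposal, for its part, is not a proof either, and to your credit you say so. It is a reasonable survey of the standard attack (non-escape of mass, extra Hecke/torus invariance, positive entropy, the Einsiedler--Katok--Lindenstrauss classification, and subconvexity for the effective rate), but both load-bearing inputs --- uniform non-escape of mass for toral packets in rank $\ge 3$, and discriminant-aspect subconvexity for the relevant higher-rank $L$-functions --- are open for general $n$. Two places where the sketch is too optimistic even modulo those inputs: (i) positive entropy of the weak-$\ast$ limit does not follow formally from recurrence under Hecke correspondences; in the ELMV treatment the entropy lower bound is itself the hard arithmetic step (a Linnik-type counting argument, or subconvexity), and this is exactly where the splitting restrictions enter for $n=3$; (ii) the Einsiedler--Katok--Lindenstrauss theorem only classifies ergodic components of positive entropy, so one must separately control the zero-entropy part of the ergodic decomposition and exclude intermediate homogeneous measures, neither of which is routine. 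As written, your text is a correct diagnosis of why the conjecture is hard, not an argument that establishes it.
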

In the following paragraph we will use the $\Theta$ notation. In this notation, we write $f=\Theta(g)$ for two functions $f,g$, if there exist constants $C_1,C_2>0$ such that $f<C_1g$ and $g<C_2f$. 

Another ingredient of the proof is a construction of a number field with very small units compared to its discriminant.
A construction of Shapira \cite{ShapiraEscape} following Cassels \cite{cassels1952product} is a series of number fields $K$ whose unit groups are generated by elements $u\in \mathcal{O}_K^\times$ such that $\log |u| = \Theta(\log {\rm disc}(K))$ (which is exponentially small compared to the bound given by the class number formula: $\log |u| = O({\rm disc}(K)^{1/(n-1)})$, provided that the unit lattice is sufficiently balanced).
Applying Conjecture \ref{conj: number fields packets equidistributed} to these number fields, we get that the different measures $\mu_I$ for $I\in {\rm Cl}_{\mathcal{O}_K}$ are equidistributed. Using this equidistribution we can approximate a generic point $x\in X_n$ of our desired ergodic measure $\mu$, namely there is $I\in {\rm Cl}_{\mathcal{O}_K}$ such that $d_{X_n}(x, x') = O({\rm disc}^{-\star}(K))$ for some $x'\in \supp \mu_I$. 
Hence for $r=\Theta(\log({\rm disc}(K)))$ ball in $A$, $a\in B_A(r)$ we have that $ax$ is close to $ax'\in \supp(\mu_I)$. 
On the other hand, the orbit $ax'$ is not much larger. The periods are controlled by the size of the units, which are again $\Theta(\log({\rm disc}(K)))$. This would imply that we can approximate every ball in an $A$-orbit with measures $\mu_I$ for $I$ in the class group of the special number field $K$. 

To overcome the need to use Conjecture \ref{conj: number fields packets equidistributed}, we prove a weaker form on a special collection of orders. We take a measure $\mu_I$ as above and apply to it a $p$-Hecke operator $T_p$. On the one hand, Hecke operators are known to have well-behaved equidistribution properties (See Clozel, Oh and Ullmo \cite{HeeOhHeckeOld} for an effective equidistribution result on Hecke operators and \cite{benoist2007equidistribution,Shapira-Aka} for other results using equidistribution of Hecke operations applied to closed orbits). We expect to have $f(T_p\mu_I, m_{X_n}) = O(p^{-\star})$. 
On the other hand, the resulting measure $T_p\mu_I$ is the average of measures on compact orbits, corresponding to modules of a sub-order $R=\ZZ+p\mathcal{O}_K \subset \mathcal{O}_K$ of index $[\mathcal{O}_K: R]=p^{n-1}$. 

While this guarantees that the orbits would be equidistributed, to bound the sizes of the orbits we tweak the construction of $K$. 
Not only $\mathcal{O}_K$ should have units of logarithmic size, but also $R$. This guarantees that $T_p\mu_I$ is an average of measures of logarithmically small compact orbits, and enables the proof to work without Conjecture \ref{conj: number fields packets equidistributed}.

We will now describe the construction of Shapira \cite[Section 5.1]{ShapiraEscape}, following Cassels \cite{cassels1952product}, and our modification.
\begin{construction}
    For any tuple $\overline m=(m_1,\dots,m_n)\in \ZZ^n$ define the polynomial $R_{\overline m}=\prod_{i=1}^n(x-m_i)-1$. Under mild conditions on $\overline m$ we get that $R_{\overline m}$ is irreducible.
    In the number field $\QQ[x]/(R_{\overline m})$, the elements $x-m_i$ are units. 
    The description of these units guarantees that they have polynomial size compared to the discriminant of the number field.    
\end{construction}
Shapira uses the structure of the units to show that the compact $A$-orbits corresponding to $\ZZ[x]/(R_{\overline m})$ spend a high percentage of their time near the cusp (see Theorem \ref{thm: Shapira escape}).

In our construction, we use a similar construction of polynomials: 
\begin{construction}
    Let $p$ be a prime number.
    For any tuple $\overline m=(m_1,\dots,m_n)\in \ZZ^n$ define the polynomial $R_{\overline m, p}=\frac{\prod_{i=1}^n(px-m_i)-1}{p^n}$. Under some modular conditions on $\overline m$ we get that $R_{\overline m, p}$ is integral and irreducible.
    In the number field $K=\QQ[x]/(R_{\overline m, p})$, the elements $px-m_i$ are units. 
    The description of these units guarantees that they have polynomial size compared to the discriminant of the number field, but also lies in $\ZZ + p\mathcal O_K$.
\end{construction}
This allows us to find a small compact orbit $Ax$ such that, after applying the Hecke operator to it, the new collection of compact orbits is effectively equidistributed in $X_n$ and is composed of many small compact $A$ orbits.
\begin{remark}
    Conjecture \ref{conj: number fields packets equidistributed} seems very complicated. 
    It was proven in case $n=2$ in \cite{duke1988hyperbolic}. 
    A non-effective version of it is proven for $n = 3$ under some splitting restriction in \cite{einsiedler2011distribution}. The proof uses a deep and nontrivial number theoretic result, namely, the sub-convexity estimate of $L$ functions of cubic fields. 
\end{remark}

The proof of Theorem \ref{thm: haarApproximation} goes along similar lines. We start with a compact orbit $Ax_0$, constructed using the techniques used in \cite{ShapiraEscape}. As in \cite{ShapiraEscape}, this orbit will lie in the cusp for a density $1$ proportion of its lifetime. To achieve a measure with partial escape of mass, we apply two Hecke operators. The first Hecke operator is responsible for pulling an appropriate proportion of the orbit out of the cusp. The second operator is used to get equidistribution from the section of the orbit outside of the cusp (this can be done with one operator, see Remark \ref{rem: one Hecke suffices and polynomial dependence in discriminant}).
However, this is not a single $A$-orbit. Instead, we prove by ergodicity that at least one of the $A$-orbits that constitute the Hecke operators applied to the orbit has to be almost equidistributed as well. 

\begin{remark}[Comparison to \cite{benoist2007equidistribution} and \cite{shapira2021translates}]\label{rem: compariason to hee and shap}
    Plugging $c=1$ to Theorem \ref{thm: haarApproximation} yields a construction identical to the one in \cite{benoist2007equidistribution} (with some irrelevant extra steps). We begin with the Hecke operator of a compact orbit and get a new collection of $A$-orbits. We obtain its equidistribution and then use ergodicity to sample one of them that equidistributes.
    The construction in \cite{shapira2021translates} is of the same kind, however, they show a way to sample a specific compact orbit that equidistributes, not using ergodicity in the same fashion.
\end{remark}

\subsection{Further Research}
The first natural improvement of Theorem \ref{thm: ergodicDecomp} is the full approximation. 
\begin{question}
    Let $\mu \in \cinv$. Can it be the limit measure of other measures in $\cinv$? In other words, is $\mu \in \overline{\cinv\setminus \{\mu\}}$?
\end{question}
It can be seen that simply improving the bounds we give in this paper cannot give a positive answer to this question, for the following reason.
If $n=3$, $Ay$ is a compact orbit and $Ax$ is an orbit. We can consider the set $B = \{a\in A: d_{X_3}(ax, Ay)<\delta\}$. 
The connected components of $B$ are roughly hexagons. It can be seen that two such hexagons $H_1, H_2$ cannot have $R$ long parts of the boundaries which are $\delta R$ close to one another, for some $\delta>0$ and all $R>0$ sufficiently large.

Focusing on a different aspect of this work, we would like to find even less trivial limits of large $A$-orbits in the following sense:
\begin{question}
    What are the possible limits $\lim_{i\to \infty} \mu_{Ax_i}$ where $Ax_i$ is a compact orbit corresponding to an order $R_i$ satisfying that $\on{reg}(R_i) > \on{disc}(R_i)^{\varepsilon}$?
    Can it be a non-ergodic $A$-invariant probability measure?
    Can it contain as an ergodic component a compact orbit uniform measure $\mu_{Ay}$?
\end{question}

Another possible direction for improvement is by working in different homogenous spaces. 
Many questions can be asked, and here we choose to focus on a single homogenous space:
\begin{question}
  Let $B$ be an $n^2$-dimensional central simple algebra over $\QQ$ which splits over $\RR$. Let $\cO_B$ be an order in $B$. Let $\Gamma = \on{SL}_1(\cO_B) \subseteq \on{SL}_n(\RR)$ a lattice. 
  There are many compact orbits in $\on{SL}_n(\RR)/\Gamma$, coming from degree-$n$ totally real number fields $K\subset B$. 
  What are the possible limits of these compact orbits? Can Haar measure be obtained? 
  Can you extend Theorem \ref{thm: ergodicDecomp} to this setting? Can you compute an explicit non-Haar limit measure (in contrast to Theorem \ref{thm: ergodicDecomp} where we have control only over a small portion of the measure)?
\end{question}

\subsection{Acknowledgments}
The second author would like to express his deep gratitude to Uri Shapira for his support and encouragement. Without them, this paper would not exist. We thank Uri Shapira for bringing the main questions answered in this paper to our attention and for many intriguing discussions with him which contributed a lot to this paper. Moreover, the first author thanks Andreas Wieser and Elon Lindenstrauss for many fruitful discussions. The first author thanks the generous donation of Dr. Arthur A. Kaselemas.
The second author acknowledges the support of ISF grants number 871/17. 
This work has received funding from the European Research Council (ERC) under the European Union’s Horizon 2020 research and innovation programme (Grant Agreement No. 754475), and ERC grant HomDyn, No. 833423.
This work is part of the first author's Ph.D. thesis.
\section{Notation and Preliminaries}
\begin{definition}[$O$-notations]\label{def: O notations}
    For two real functions $f,g$ on a set $A$ we write $f\ll g$  if there exists a constant $C$ independent on the parameters of $f$ and $g$ such that $|f|\leq Cg$ on $A$. 
    The notation $O(g)$ will refer to some implicit function $f$ which satisfies $f\ll g$. 
    The notation $\Theta(g)$ will refer to some implicit function $f$ which satisfies $g \ll f\ll g$. 
    Whenever $r$ is a parameter going to $0$ or $\infty$, the notation $o_r(g)$ will refer to some implicit function $f$ which satisfies $f\ll g\cdot h$, for some implicit function $h \to 0$ as $r$ goes to $0$ or $\infty$ respectively. 
\end{definition}
Fix $\norm{\cdot}$ to denote the $\ell^2$ norm on $\RR^n$. 
Given a lattice $\Lambda\subset \RR^n$ we use $\cov{\Lambda}$ to denote the co-volume of $\Lambda$.
Let $X_n$ denote the space of unimodular lattices in $\RR^n$ and let $d_{X_n}(\cdot,\cdot)$ 
denote the Riemannian metric on $X_n = \on{SL}_n(\RR)/\on{SL}_n(\ZZ)$ coming from the right invariant Riemannian metric $d_{\on{SL}_n(\RR)}(\cdot, \cdot)$. 
Let $\RR^{n-1}_0=\{v\in \RR^n:\sum_iv_i=0\}$. We abuse notations and define $\exp=\exp\circ \diag:\RR^{n-1}_0\to A$ to be the standard parametrization.
We denote by $m_{X_n}$ probability measure on $X_n = \on{SL}_n(\RR)/\on{SL}_n(\ZZ)$ coming from the Haar measure on $\on{SL}_n(\RR)$.
\begin{definition}[Space of Measures]\label{def: space of measures}
	Let $\mathcal M(X_n)$ denote the space of finite measures on $X_n$ endowed with the topology induced by $\mu_k\rightarrow \mu$ if for any $f\in C_c(X_n)$ it holds that $\mu_k(f)\rightarrow \mu(f)$. 
\end{definition}
The following definition is relevant to \S\ref{sec: gluing lemma}. Denote by $L, U\subset \on{SL}_n(\RR)$ the subgroups of lower and upper triangular matrices with diagonal $1$. 

\begin{definition}[Special subgroups of $\on{SL}_n(\RR)$]
    \label{def: subgroups}
    Let 
    \begin{equation}
        w_0 = \left(\frac{2i-n-1}{2}\right)_{i=1}^n\in \RR^{n-1}_0, \qquad a_t = \exp(tw_0) \text{ for all }t\in \RR,
    \end{equation}
    and note that 
    \begin{align*}
        L = \{g\in \on{SL}_n(\RR): a_{-t}ga_{t}\xrightarrow{t\to \infty}I\},\quad
        U = \{g\in \on{SL}_n(\RR): a_tga_{-t}\xrightarrow{t\to \infty}I\},
    \end{align*}
    that is, $L,U$ are the expanding and contracting horospheres with respect to $a_t$.         
\end{definition}

We now present formally the relation between compact $A$-orbits in $X_n$ and subgroups of number fields. 
\begin{definition}\label{def: compact orbit}
    For every degree $n$, totally real number field $K$, denote by ${\rm Lat}_K'$ the set of free $\ZZ$-modules of rank $n$ in $K$. We define an equivalence relation on ${\rm Lat}_K'$ by identifying two lattices $\Lambda_1,\Lambda_2 \subset K$ if $\Lambda_1 = k\Lambda_2$ for some $k\in K^\times$. The quotient space is denoted by ${\rm Lat}_K$, and for every $\Lambda\in {\rm Lat}_K'$, denote by $[\Lambda]\in {\rm Lat}_K$ its equivalence class.
    For every rank $n$, $\ZZ$-module $\Lambda\in {\rm Lat}_K'$ consider the lattice $x_\Lambda := \sigma(\Lambda)/(\cov{\sigma(\Lambda)})^{1/n} \in X_n$, where $\sigma_i: K\hookrightarrow\RR;i=1,\dots,n$ is some ordering of the natural embeddings of $K$ and let $\sigma=(\sigma_1,\dots,\sigma_n): K\rightarrow\RR^n$ denote their concatenation. 
    Denote by $\cO_\Lambda = \{k\in K: k\Lambda\subseteq \Lambda\}$. This is a ring. Denote by $\cO_\Lambda^{\times, >0} = \{u\in \cO_\Lambda^\times:\sigma_i(u)>0: i=1,\dots,n\}.$
    For every $U\subseteq \cO_K^{\times, >0}$ denote $A_U = \{\diag(\sigma_1(u), \sigma_2(u),\dots,\sigma_n(u)): u\in U\}$. 
    Note that these definitions depend implicitly on the ordering of the real embeddings of $K$.
\end{definition}
\begin{theorem}\label{thm: all compact orbits}
    Fix $n\ge 0$.
    For every totally real number field $K$ of degree $n$, $[\Lambda] \in {\rm Lat}_K$, and choice of the ordering of the real embeddings of $K$,
    the orbit $Ax_\Lambda$ is compact, independent of the representative $\Lambda\in {\rm Lat}_K'$ of $[\Lambda]\in {\rm Lat}_K$ and this is a one to one parametrization of all compact $A$-orbits in $X_n$. 
\end{theorem}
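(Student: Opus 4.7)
The plan is to verify three assertions---compactness of $Ax_\Lambda$, independence of the chosen representative in $[\Lambda]$, and the bijection with compact $A$-orbits---each resting on Dirichlet's unit theorem together with a structural analysis of the commutant of $A$ inside $M_n(\QQ)$. For compactness, I first identify $\Stab_A(x_\Lambda)$. If $a=\diag(a_1,\dots,a_n)\in A$ satisfies $a\sigma(\Lambda)=\sigma(\Lambda)$, then picking any $\lambda\in\Lambda$ with all $\sigma_i(\lambda)\neq 0$ and writing $a\sigma(\lambda)=\sigma(\lambda')$ forces $a_i=\sigma_i(\lambda'/\lambda)$ for the unit $u=\lambda'/\lambda\in\cO_\Lambda^\times$, and positivity of $a$ forces $u\in\cO_\Lambda^{\times,>0}$. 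Conversely every totally positive unit yields a stabilizer element, so $\Stab_A(x_\Lambda)=A_{\cO_\Lambda^{\times,>0}}$. Dirichlet applied to the order $\cO_\Lambda\subset K$ gives $\on{rank}\cO_\Lambda^{\times,>0}=n-1$, and its logarithmic embedding into $\RR^{n-1}_0$ is a discrete full-rank subgroup, so $\Stab_A(x_\Lambda)$ is cocompact in $A\cong\RR^{n-1}_0$ and $Ax_\Lambda$ is compact.

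For independence of representative, if $\Lambda'=k\Lambda$ then $x_{\Lambda'}=(D_k/|N(k)|^{1/n})x_\Lambda$ with $D_k=\diag(\sigma_i(k))$. When $k$ is totally positive, $D_k/|N(k)|^{1/n}\in A$ and the two orbits coincide directly. In general, the signed diagonal matrix $D_k/|N(k)|^{1/n}$ has determinant $\pm 1$, and one must absorb its sign pattern into $Ax_\Lambda$ by composing with a unit $u\in\cO_\Lambda^\times$ with matching signs together with an appropriately signed permutation in $\on{SL}_n(\ZZ)$; this step depends on the precise sign and ordering conventions used to identify $X_n=\on{SL}_n(\RR)/\on{SL}_n(\ZZ)$ with unimodular lattices.

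For surjectivity, I start from a compact orbit $Ax$, write $x=g\on{SL}_n(\ZZ)$, and consider $\Gamma=g^{-1}\Stab_A(x)g\subset\on{SL}_n(\ZZ)$, a free abelian group of rank $n-1$ consisting of simultaneously $\RR$-diagonalizable matrices with positive eigenvalues. The $\QQ$-subalgebra $R=\QQ[\Gamma]\subseteq M_n(\QQ)$ is commutative and semisimple (being $\RR$-conjugate to a subalgebra of the diagonal algebra), so $R\cong K_1\times\cdots\times K_r$ with each $K_i$ totally real. Under the $R$-module decomposition $\QQ^n=\bigoplus V_i$, each $V_i$ is a free rank-one $K_i$-module and $n=\sum[K_i:\QQ]$. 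Since elements of $\Gamma$ are integer matrices, $\Gamma$ embeds into $\prod\cO_{K_i}^{\times,>0}$, and Dirichlet bounds $\on{rank}\Gamma\leq\sum([K_i:\QQ]-1)=n-r$; thus $\on{rank}\Gamma=n-1$ forces $r=1$ and $K:=K_1$ is totally real of degree $n$. Identifying $V_1\cong K$ as $K$-modules sends $g^{-1}\ZZ^n$ to a full $\ZZ$-module $\Lambda\subseteq K$ with $Ax=Ax_\Lambda$ for a suitable ordering of embeddings. Injectivity follows immediately, since the field $K$ and the $\cO_\Lambda$-module $\Lambda$ are recovered intrinsically from the orbit up to $K^\times$-scaling and permutation of embeddings.

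The main obstacle I anticipate is Step 2: a $K^\times$-scalar $k$ with mixed sign pattern can genuinely take $x_\Lambda$ out of $Ax_\Lambda$ unless the sign can be compensated inside $\cO_\Lambda^{\times}\cdot\on{SL}_n(\ZZ)$ (this is essentially the narrow-versus-wide class-group distinction). Pinning down the correct orientation and ordering convention on $X_n$ reduces the general case to the totally positive one, after which the remaining steps are routine.
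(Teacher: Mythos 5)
First, a point of comparison: the paper does not prove this theorem at all --- it is quoted from McMullen \cite{mcmullen2005minkowski}, with the injectivity dismissed as folklore and explicitly not used --- so there is no in-paper argument to measure yours against. On its own merits, your compactness and surjectivity steps are the standard and essentially correct arguments: the stabilizer computation identifying $\Stab_A(x_\Lambda)$ with $A_{\cO_\Lambda^{\times,>0}}$, Dirichlet for the order $\cO_\Lambda$, and the rank count $\operatorname{rank}\Gamma\le\sum_i([K_i:\QQ]-1)=n-r$ forcing $\QQ[\Gamma]$ to be a single totally real field of degree $n$ are all sound. (In the surjectivity step you need one more degree of freedom than ``a suitable ordering of embeddings'': the diagonal matrix relating $g\ZZ^n$ to $\sigma(\Lambda)$ may have negative entries, and you repair this by replacing $\Lambda$ with $k\Lambda$ for some $k\in K^\times$ of the matching sign pattern, which always exists since $\sigma(K)$ meets every open orthant.)

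The genuine gap is exactly the one you flagged in Step 2, and it cannot be closed, because the assertion is false as stated: $Ax_\Lambda$ does \emph{not} depend only on the class of $\Lambda$ under the equivalence $\Lambda_1=k\Lambda_2$, $k\in K^\times$. Take $n=2$, $K=\QQ(\sqrt3)$, $\Lambda=\ZZ[\sqrt3]$, $k=\sqrt3$. Every unit of $\ZZ[\sqrt3]$ has norm $+1$ (the fundamental unit is $2+\sqrt3$), hence is $\pm(\text{totally positive})$, so no unit matches the sign pattern $(+,-)$ of $\sqrt3$. The $A$-invariant function $N(v)=v_1v_2$ takes on the normalized orbit $Ax_{\cO_K}$ exactly the values $(a^2-3b^2)/(2\sqrt3)$, and on $Ax_{\sqrt3\cO_K}$ exactly the negatives of these values; since $a^2-3b^2$ represents $1$ but not $-1$, the two orbits are distinct even though $\sqrt3\cO_K = \sqrt3\cdot\cO_K$. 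No orientation or ordering convention rescues this: permuting the coordinates preserves $\sigma(\cO_K)$ by Galois stability and does not change the value set of $N$. The correct statement --- which your Step 1 stabilizer computation essentially already proves --- is that, for a fixed ordering of embeddings, $Ax_{\Lambda_1}=Ax_{\Lambda_2}$ iff $\Lambda_1=k\Lambda_2$ with $k$ totally positive up to the sign characters realized by $\cO_\Lambda^\times$; the fibres of the parametrization are narrow classes, not wide ones. Your closing injectivity sentence (``recovered up to $K^\times$-scaling'') inherits the same imprecision and should be stated with totally positive scaling instead.
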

This theorem is equivalent to \cite{mcmullen2005minkowski}, apart from the part of this parametrization being one-to-one, which is Folklore and we do not use it in this paper.
\begin{definition}\label{def: inv measures}
    For every point $x\in X_n$ such that the orbit $Ax$ is compact, denote $\mu_{Ax}$ the $A$-invariant measure probability on $Ax$. 
    For every $[\Lambda]\in {\rm Lat}_K$ denote $\mu_\Lambda = \mu_{Ax_\Lambda}$. 
\end{definition}
\section{Effective Approximations}\label{2}
In this section, we prove an effective approximation theorem of points in $X_n$ by points of compact orbits. Formally, the goal is to prove the following lemma.
\begin{lemma}\label{lem: fastApproximation2}
    For every compact set $\cK\subset X_n$ there exists $C_{\cK}>0$ and a sequence $t_k\xrightarrow{k\to \infty}\infty$ such that the following holds:
    for any $y\in \cK$ there exists a sequence $(y_k)_k\subset X_n$ such that $d_{X_n}(y_k, y) < C_{\cK}\exp\left(-\frac{\lfloor n/2\rfloor}{4n(n^2-1)}t_k\right)$ and such that $y_k$ is stabilized by a subgroup $\exp(\Lambda_k)\subseteq A$ where $\Lambda_k\subseteq \RR^{n-1}_0$ is generated by vectors $(v^{(i)}_k)_{i=1}^{n-1}$ such that $\left|\left( v_k^{(i)} \right)_j - (1 - \delta_{ij}n)t_k\right| = O(1)$.
\end{lemma}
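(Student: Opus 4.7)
The plan is to realize the approximating points $y_k$ as points on compact $A$-orbits attached to specially engineered orders in totally real number fields, and to extract the quantitative closeness from effective equidistribution of Hecke packets.

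First, I would produce, for a sequence $t_k \to \infty$, a totally real degree-$n$ number field $K_k$ and a sub-order $R_k = \ZZ + p_k \mathcal{O}_{K_k}$ (with $p_k$ a prime to be chosen) such that $R_k^{\times, >0}$ contains units $u_1, \dots, u_{n-1}$ whose log-embedding vectors $(\log|\sigma_j(u_i)|)_{j=1}^n$ lie within $O(1)$ of $(1-\delta_{ij}n)t_k$. This is a directional refinement of the Cassels--Shapira construction mentioned in the introduction: rather than merely producing degree-$n$ fields whose unit lattice is of logarithmic covolume, the construction is arranged so that the log-embedding lattice of $R_k$ admits a basis pointing almost exactly along the $n-1$ canonical directions of the trace-zero hyperplane $\RR^{n-1}_0$. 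For any full $R_k$-module $\Lambda \subset K_k$, the compact orbit $A x_\Lambda$ is then stabilized by exactly the kind of subgroup $\exp(\Lambda_k)\subseteq A$ demanded by the conclusion.

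Next, I would extract the approximation via Hecke operators. Applying the $p_k$-Hecke operator $T_{p_k}$ to the measure $\mu_{\mathcal{O}_{K_k}}$ on the compact orbit $Ax_{\mathcal{O}_{K_k}}$ produces, by the standard identification of $p$-Hecke neighbors with modules over $\ZZ + p\mathcal{O}_K$, an average of $A$-invariant probability measures on compact orbits $Ax_\Lambda$ with $\mathcal{O}_\Lambda = R_k$. By the Clozel--Oh--Ullmo effective equidistribution theorem \cite{HeeOhHeckeOld}, the measure $T_{p_k}\mu_{\mathcal{O}_{K_k}}$ is within $O(p_k^{-\beta})$ of the Haar measure $m_{X_n}$, in the metric of Definition \ref{def: space of measures}, for some absolute $\beta = \beta(n) > 0$. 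The role of the compact set $K$ from the hypothesis is to control the supports of the Lipschitz bump functions used to convert this weak-$\ast$ estimate into a uniform covering statement, namely that any $y\in X_n$ lies within distance $O(p_k^{-\beta/(n^2-1)})$ of a support point $y_k$ of one of the constituent orbit measures of $T_{p_k}\mu_{\mathcal{O}_{K_k}}$.

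Finally I would calibrate $p_k \asymp \exp(\alpha t_k)$ with $\alpha = \tfrac{\lfloor n/2\rfloor}{4n\beta}$, so that $p_k^{-\beta/(n^2-1)} \le \exp\!\left(-\tfrac{\lfloor n/2\rfloor}{4n(n^2-1)}t_k\right)$, matching the rate in the conclusion. The main obstacle is the first step: performing the modified Cassels--Shapira construction so that not only $\mathcal{O}_{K_k}$ but also $R_k = \ZZ + p_k\mathcal{O}_{K_k}$ carries a basis of units whose log-embeddings essentially point along the canonical directions $(1-\delta_{ij}n)t_k$, while $p_k$ simultaneously grows on the prescribed exponential scale. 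The tuning of these three parameters against each other is exactly what fixes the numerical exponent $\tfrac{\lfloor n/2\rfloor}{4n(n^2-1)}$ appearing in the statement; the Hecke equidistribution step and the final rate-matching are essentially bookkeeping once the number-theoretic construction is in place.
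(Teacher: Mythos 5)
Your proposal follows essentially the same route as the paper: Theorem \ref{thm: special field} is exactly the ``directional refinement'' of Cassels--Shapira you describe (units of $\ZZ+p\mathcal{O}_K$ whose log-embeddings are $(1-\delta_{ij}n)\cdot 2n\log p+O(1)$), Lemma \ref{lem: Hecke effect on compact orbit} and Corollary \ref{cor: compactOrbitsConstruction2} record that these units stabilize \emph{every} $p$-Hecke neighbor of $x_{\cO_K}$, and the approximation rate comes from Clozel--Oh--Ullmo with the calibration $t_k=2n\log p_k$.

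One intermediate step, as you state it, does not hold. The claim that $T_{p_k}^{\rm M}\mu_{\mathcal{O}_{K_k}}$ is within $O(p_k^{-\beta})$ of $m_{X_n}$ is not a consequence of the operator-norm bound on $T^{\rm F}_a|_{L^2_0}$: the orbit measure $\mu_{\mathcal{O}_{K_k}}$ is singular with respect to $m_{X_n}$ and its orbit need not stay in a region of uniformly bounded injectivity radius, so one cannot pair it against $T^{\rm F}_a\tilde f$ in $L^2$. Effective equidistribution of Hecke translates of closed orbits is a genuinely harder statement (compare Conjecture \ref{conj: number fields packets equidistributed}) and is precisely what the paper's argument avoids needing. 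The repair --- and what the paper does in Lemma \ref{lem: heckeApproximation} --- is to work with a single base point: by Tomanov--Weiss one may translate $x_{\cO_K}$ by an element of $A$ into a fixed compact set $C$ (controlling the injectivity radius on the \emph{source} side, which your write-up also omits), and then a pairing of indicator functions of small balls around $x_p$ and $y$ against $T^{\rm F}_a$ shows that some Hecke neighbor of $x_p$ lies within $O\bigl(\|T^{\rm F}_a|_{L^2_0}\|^{1/(n^2-1)}\bigr)$ of $y$. Since all these neighbors are stabilized by $\exp(\Lambda_k)$, this yields the lemma with no statement about the full orbit measure. Finally, the specific exponent is not left as a black box $\beta$: the paper takes the Hecke operator with $k_1=\dots=k_{\lfloor n/2\rfloor}=0$ and $k_{\lfloor n/2\rfloor+1}=\dots=k_n=1$, whose norm on $L^2_0$ is at most $p^{-\lfloor n/2\rfloor/2}$ by Theorem \ref{thm: equidistributionOfHecke}; this value $\beta=\lfloor n/2\rfloor/2$ together with $t_k=2n\log p_k$ is what produces $\frac{\lfloor n/2\rfloor}{4n(n^2-1)}$.
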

\begin{remark}\label{rem: approximation of parts of orbits}
    Lemma \ref{lem: fastApproximation2} implies that $Ay_k$ is compact with total volume $n^{n-3/2}t^{n-1}_k+O(t_k^{n-2})$ in the $\RR^{n-1}_0$ parametrization. In addition, it implies that for every $v\in \RR^{n-1}_0$ and $y,y_k$ as in the lemma, we have:
    \begin{align}\label{eq: when close}
        d_{X_n}((\exp v).y_k,(\exp v).y) \le O\left(\exp\left(\max_{1\le i,j\le n}|v_{i}-v_j|-\frac{\lfloor n/2\rfloor}{4n(n^2-1)}t_k\right)\right).
    \end{align}
    Since the set $B_0 = \{v\in \RR^{n-1}_0: \max_{1\le i,j\le n}|v_{i}-v_j|\le 1\}$ has volume $\on{vol}(B_0) = \sqrt n$, we deduce that a portion of $n\left(\frac{1}{n}\frac{\lfloor n/2\rfloor}{4n(n^2-1)}\right)^{n-1} + O(1/t_k)$ of the orbit $Ay_k$ approximates the the orbit $Ay$. 
\end{remark}

The proof of Lemma \ref{lem: fastApproximation2} will be composed of two components. The first is an approximation of lattices via Hecke neighbors. We will define Hecke neighbors in Definition \ref{def: Hecke} and obtain a good approximation by them in Lemma \ref{lem: heckeApproximation}.
The second component generates infinitely many points in $X_n$ with compact $A$-orbits, with a control on the geometry of their stabilizers in $A$ and the stabilizers of their sublattices of a given index. This gives us the arsenal of points on which we apply Lemma \ref{lem: heckeApproximation} to deduce Lemma \ref{lem: fastApproximation2}.

\subsection{Hecke Density}
The source of our good approximation comes from the quantitative version of the equidistribution of Hecke neighbors. 
There are many references to the equidistribution of Hecke neighbors such as ~\cite[Theorem 1.1]{HeeOhHeckeOld}, ~\cite[Theorem 3.7]{HeeOhHeckeNew},~\cite[Theorem 1.2]{NonQuantitativeHeckeEquidistribution}.
In this section, we will cite the result of Laurent Clozel, Hee Oh and Emmanuel Ullmo \cite{HeeOhHeckeOld} and deduce the approximation result we need.
\begin{definition}[Function space]\label{def: L^2_0}
    Let $L^2_0(X_n) = \{f\in L^2(X_n, m_{X_n}): \int_{X_n} f \bd m_{X_n} = 0\}$ be the Hilbert space of $L^2$ functions on $X_n$ with $0$ mean.
\end{definition}
\begin{definition}[Definition of the $p$-Hecke neighbors and the Hecke operator]\label{def: Hecke}
    For every sequence of nonnegative integers $k_1\le k_2\le \dots\le k_n$ consider:
    \[a = a_{p; k_1, k_2, \dots, k_n} = \frac{1}{p^{(k_1+\dots+k_n)/n}}\diag(p^{k_1}, p^{k_2}, \dots, p^{k_n})\in \on{SL}_n(\RR).\]
    For every $x=g\on{SL}_n(\ZZ)\in X_n$ denote $T_a(x) = g\on{SL}_n(\ZZ)a \on{SL}_n(\ZZ)$. This set is finite since $a\on{SL}_n(\ZZ)a^{-1}$ is commensurable to $\on{SL}_n(\ZZ)$.
    The size $\#T_a(x) = \#(\on{SL}_n(\ZZ)a\on{SL}_n(\ZZ)/\on{SL}_n(\ZZ))$ depends only on $k_1,\dots,k_n$ and not on $x$.
    Equivalently, 
    $$T_a(x) = \left\{\frac{1}{\sqrt[n]{\cov{x'}}} x': x'\subseteq x \text{ with }x/x' \cong \ZZ/p^{k_1}\ZZ \oplus\cdots \oplus \ZZ/p^{k_n}\ZZ\right\}.$$
    For every function $f\in L^2(X_n)$ define the Hecke action on functions \[T^{\rm F}_a(f)(x) = \frac{1}{\#T_a(x)} \sum_{x'\in T_a(x)}f(x')\in L_0^2(X_n).\]
    For every point $x\in X_n$, define the Hecke action on points
    \[T_a^{\rm M}(x) := \frac{1}{\#T_a(x)}\sum_{x'\in T_a(x)}\delta_{x'}\]
    and for every measure $\mu$ on $X_n$ define the Hecke action on measures
    \[T_a^{\rm M}(\mu) := \int_{X_n}T_a^{\rm M}(x)\bd\mu(x).\]
\end{definition} 
The following theorem is a particular case of \cite[Theorem 1.1]{HeeOhHeckeOld}, specialized for $\on{SL}_n$ as in \cite[Example 5.1]{HeeOhHeckeOld}.
\begin{theorem}\label{thm: equidistributionOfHecke}
    For every prime $p$ and $k_1\le k_2\le \dots\le k_n, a = a_{p; k_1, k_2, \dots, k_n}\in \on{SL}_n(\RR)$ as in Definition \ref{def: Hecke}, the operator norm of $\left.T^{\rm F}_a\right|_{L^2_0(X_n)}$ is bounded by:
    \[\left\|\left.T^{\rm F}_a\right|_{L^2_0(X_n)}\right\| \le \prod_{i\le n/2} \frac{1}{p^{(k_{n+1-i}-k_i)/2}}\frac{(k_{n+1-i}-k_i)(p-1) + (p+1)}{p+1} \le  p^{-\frac{1}{2}\sum_{i=1}^{n/2}(k_{n+1-i}-k_i)} \cdot C(k_1,\dots,k_n),\]
    where $C(k_1,\dots,k_n)$ depends polynomially on $k_1,\dots,k_n$.
\end{theorem}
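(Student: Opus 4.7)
The plan is to derive this estimate directly from the general bound \cite[Theorem 1.1]{HeeOhHeckeOld} by specialization to the $\on{SL}_n$ situation. First, I would identify $T_a^{\rm F}$ with convolution by (the normalized indicator of) the double coset $\on{SL}_n(\ZZ_p)\, a\, \on{SL}_n(\ZZ_p)$ in the local group $\on{SL}_n(\QQ_p)$: every $p$-Hecke neighbor of $x\in X_n$ corresponds, via strong approximation, to a coset of $\on{SL}_n(\ZZ_p)$ inside this double coset, and the factor $\frac{1}{\#T_a(x)}$ is precisely the local normalization. Because $L^2_0(X_n)$ has no $\on{SL}_n$-invariant vectors, the norm of $T_a^{\rm F}$ on $L^2_0(X_n)$ is bounded by the supremum of Hecke eigenvalues of $a$ over all non-trivial spherical automorphic representations.

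Second, I would invoke \cite[Theorem 1.1]{HeeOhHeckeOld}, which bounds each such eigenvalue by the value at $a$ of the Harish-Chandra tempered spherical function on $\on{SL}_n(\QQ_p)$. The explicit form of this spherical function for $\on{SL}_n$ is worked out in \cite[Example 5.1]{HeeOhHeckeOld} via the Satake/Macdonald formula, and factorizes as a product indexed by the $\lfloor n/2\rfloor$ pairs $(i, n+1-i)$ of opposite positive roots in the minuscule pattern, each factor being an $\on{SL}_2$ spherical value at $\diag(p^{k_i}, p^{k_{n+1-i}})$. The standard $\on{SL}_2(\QQ_p)$ Satake computation (geometric sum of $p^{-j/2}$ weighted by the Plancherel density on $K_p\backslash\on{SL}_2(\QQ_p)/K_p$) yields each factor as
\[
    p^{-(k_{n+1-i}-k_i)/2}\cdot \frac{(k_{n+1-i}-k_i)(p-1)+(p+1)}{p+1},
\]
and multiplying over $i\le n/2$ produces the first stated inequality. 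The second inequality is obtained by collecting the (linear in $k_{n+1-i}-k_i$) numerators into a single polynomial factor $C(k_1,\ldots,k_n)$.

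The main obstacle is pure bookkeeping rather than new analysis: matching the normalization of $T_a^{\rm F}$ (defined here as an average over $\#T_a(x)$ Hecke neighbors) with the convolution kernel used in \cite{HeeOhHeckeOld}, checking that the rescaling $\diag(p^{k_1},\ldots,p^{k_n}) \mapsto p^{-(k_1+\cdots+k_n)/n}\diag(p^{k_1},\ldots,p^{k_n})$ needed to land in $\on{SL}_n(\RR)$ acts trivially on the local computation (since the center of $\on{GL}_n$ acts by a scalar), and confirming the $(p+1)$ denominator coming from the local Plancherel measure on $\on{SL}_2(\QQ_p)$. No new analytic input beyond the cited result is needed; the entire content is a careful translation.
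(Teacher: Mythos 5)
Your proposal matches the paper's approach exactly: the paper gives no independent proof of this theorem, stating only that it is a particular case of \cite[Theorem 1.1]{HeeOhHeckeOld} specialized to $\on{SL}_n$ as in \cite[Example 5.1]{HeeOhHeckeOld}, which is precisely the reduction you outline. Your sketch of the translation (adelic/double-coset identification of $T^{\rm F}_a$, bounding the norm by Hecke eigenvalues on the nontrivial spherical spectrum, and the product of $\on{SL}_2(\QQ_p)$ spherical values over the pairs $(i,n+1-i)$ --- more precisely a strongly orthogonal system of roots in the sense of Oh's matrix-coefficient bounds) is consistent with the cited source and correctly reproduces the stated factors.
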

\begin{lemma}\label{lem: heckeApproximation}
    For every compact subset $\cK\subset X_n$ there exists $C = C(\cK) > 0$ such that for any $x, y\in \cK$ and $p, k_1\le k_2\le \dots\le k_n, a = a_{p; k_1, k_2, \dots, k_n}\in \on{SL}_n(\RR)$ as in Definition \ref{def: Hecke}, there exists a Hecke neighbor $x'\in T_a(x)$ such that:
    \begin{equation}
            d_{X_n}\left(x',y\right)\leq C(\cK)\left\|\left.T^{\rm F}_a\right|_{L^2_0(X_n)}\right\|^{1/(n^2-1)}.
    \end{equation}
\end{lemma}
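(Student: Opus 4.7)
The plan is to deduce the pointwise ``near-hit'' statement from the $L^2$ operator-norm bound of Theorem \ref{thm: equidistributionOfHecke} in the standard way: pair two small bumps, one at $x$ and one at $y$, against the Hecke operator, and then transfer a Hecke near-hit for a perturbed point back to $x$ itself using equivariance of Hecke orbits under left $\on{SL}_n(\RR)$-translation. Write $d=n^2-1=\dim X_n$, fix $x,y\in K$, and set $\delta:=C_0\|T^{\rm F}_a|_{L^2_0(X_n)}\|^{1/d}$, with $C_0=C_0(K)$ to be chosen.

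Pick non-negative continuous bumps $\phi_y,\psi_x:X_n\to[0,1]$ supported in $B(y,\delta/2)$ and $B(x,\delta/2)$, each $\ge \mathbbm{1}$ on the half-radius ball, so that $\int\phi_y\,dm_{X_n}\ge c_K\delta^d$ and similarly for $\psi_x$; the constant $c_K>0$ is uniform in $y\in K$ and in small $\delta$ because $K$ is compact and $m_{X_n}$ is locally Riemannian. Decompose $\phi_y=c\,\mathbbm{1}_{X_n}+\phi_{y,0}$ with $c=\int\phi_y\,dm_{X_n}\ge c_K\delta^d$ and $\phi_{y,0}\in L^2_0$. Then
\begin{equation*}
  \langle T^{\rm F}_a\phi_y,\psi_x\rangle_{L^2}=c\!\int\psi_x\,dm_{X_n}+\langle T^{\rm F}_a\phi_{y,0},\psi_x\rangle.
\end{equation*}
The main term is at least $c_K^2\delta^{2d}$, while Cauchy--Schwarz together with $\|\phi_{y,0}\|_2\le\|\phi_y\|_2\ll\delta^{d/2}$ bounds the error by $\|T^{\rm F}_a|_{L^2_0}\|\cdot\delta^d$. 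Choosing $C_0$ large enough that $c_K^2\delta^d$ dominates, the pairing is strictly positive, producing some $\tilde x\in B(x,\delta/2)$ with $T^{\rm F}_a(\phi_y)(\tilde x)>0$, hence a Hecke neighbor $\tilde x'\in T_a(\tilde x)$ with $d(\tilde x',y)<\delta/2$.

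It remains to pass from $\tilde x$ to $x$. Because the metric on $X_n$ descends from a right-invariant metric on $\on{SL}_n(\RR)$, the inequality $d(\tilde x,x)<\delta/2$ allows us to write $\tilde x=hx$ for some $h\in\on{SL}_n(\RR)$ with $\|h-I\|_{op}\ll\delta$. The equivariance $T_a(hx)=h\cdot T_a(x)$ then gives $x'\in T_a(x)$ with $\tilde x'=hx'$; since $\tilde x'$ lies within $\delta/2$ of $y\in K$ it sits in a fixed compact enlargement $K^+$ of $K$, on which the $\on{SL}_n(\RR)$-action near the identity is uniformly Lipschitz, whence $d(x',\tilde x')\ll_K\delta$. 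The triangle inequality yields $d(x',y)\le C(K)\delta$ as desired. The only delicate point is the uniform volume lower bound $\on{vol}(B(y,\delta))\asymp_K\delta^d$ underlying $c_K$; this is immediate from compactness of $K$ and local regularity of $m_{X_n}$, and all remaining steps are routine $L^2$ bookkeeping combined with the hypothesis on $\|T^{\rm F}_a|_{L^2_0}\|$.
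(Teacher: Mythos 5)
Your argument is correct and is essentially the paper's own proof: both pair a bump at $x$ with the Hecke image of a bump at $y$ in $L^2$, use the operator-norm bound on $L^2_0$ to make the constant (mean) term dominate the Cauchy--Schwarz error once the radius exceeds $\asymp\|T^{\rm F}_a|_{L^2_0}\|^{1/(n^2-1)}$, and then transfer the resulting near-hit from the perturbed point back to $x$ via the equivariance $T_a(hx)=hT_a(x)$. The only (cosmetic) differences are that the paper uses indicator functions and argues by contradiction from empty intersection, and that both treatments are equally brief about the degenerate regime where $\|T^{\rm F}_a|_{L^2_0}\|$ is not small compared to the injectivity radius on $K$.
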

\begin{proof}
    Recall the right invariant Riemannian metric $d_{\on{SL}_n(\RR)}$ on $\on{SL}_n(\RR)$, and its descend to $X_n$, the metric $d_{X_n}$. 
    Let $r_0<\min(\on{inj}(x),\on{inj}(y))$, where $\on{inj}(x)$ is the injectivity radius of $x$, that is, the maximal radius $r$ such that the translation map $g\mapsto gx$ is injective on $B_{\on{SL}_n(\RR)}(I; r)$, itself being the radius $r$ ball around the identity in $\on{SL}_n(\RR)$.
    Let $f_x=\chi_{B_{X_n}(x; r_0)}$ be the indicator of a radius $r_0$ ball around $x$.
    Then $\int_{X_n}f_x\bd m_{X_n} = \on{vol}(B_{\on{SL}_n(\RR)}(I; r_0)) = \Theta(r_0^{n^2-1})$, and a similar equality holds for $f_y=\chi_{B_{X_n}(y; r_0)}$.
    Denote by $v_{r_0} = \on{vol}(B_{\on{SL}_n(\RR)}(I; r_0))$. 
    Then $\tilde f_x = f_x - v_{r_0}\in L^2_0(X_n)$ and 
    $\tilde f_y = f_y - v_{r_0}\in L^2_0(X_n)$
    have norm $\|f_x\|^2 = \|f_y\|^2 = v_{r_0}(1-v_{r_0})^2 + (1-v_{r_0})v_{r_0}^2 = (1-v_{r_0})v_{r_0}$. 
    Consider $\bra \tilde f_x, T^{\rm F}_a(\tilde f_y)\ket$. 
    On the one hand, it is at most $\left\|\left.T^{\rm F}_a\right|_{L^2_0(X_n)}\right\| \|\tilde f_x\|\|\tilde f_y\| = \left\|\left.T^{\rm F}_a\right|_{L^2_0(X_n)}\right\|(1-v_{r_0})v_{r_0}$. 
    On the other hand, consider the set: 
    \[T_a^{-1}(B_{X_n}(y; r_0)) := \{x'\in X_n : T_a(x')\cap B_{X_n}(y; r_0)\neq \emptyset\}\]
    and note that $T^{\rm F}_a(\tilde f_y)|_{(T_a^{-1}(B_{X_n}(y; r_0)))^c} \equiv -v_{r_0}$. 
    Assume that $T_a^{-1}(B_{X_n}(y; r_0))\cap B_{X_n}(x; r_0) = \emptyset$, and get that $T^{\rm F}_a(\tilde f_y)|_{B_{X_n}(x; r_0)} \equiv -v_{r_0}$.
    It follows that:
    \begin{align*}
        \bra \tilde f_x, T^{\rm F}_a(\tilde f_y)\ket &= \int_{X_n}\tilde f_xT^{\rm F}_a(\tilde f_y)\bd m_{X_n} = \int_{X_n}f_xT^{\rm F}_a(\tilde f_y)\bd m_{X_n} = 
        \int_{B_{X_n}(x; r_0)}T^{\rm F}_a(\tilde f_y)\bd m_{X_n}\\& = v_{r_0}\cdot (-v_{r_0}) = -v_{r_0}^2.
    \end{align*}
    We deduce that \[v_{r_0}^2\le \left\|\left.T^{\rm F}_a\right|_{L^2_0(X_n)}\right\| \cdot \|\tilde f_x\|\cdot \| \tilde f_y\| = \left\|\left.T^{\rm F}_a\right|_{L^2_0(X_n)}\right\|(1-v_{r_0})v_{r_0},\] and hence $v_{r_0}\le \left\|\left.T^{\rm F}_a\right|_{L^2_0(X_n)}\right\|$. 
    Using this logic in reverse, we deduce that if $v_{r_0}>\left\|\left.T^{\rm F}_a\right|_{L^2_0(X_n)}\right\|$ then $T_a^{-1}(B_{X_n}(y; r_0))\cap B_{X_n}(x; r_0)\neq \emptyset$, that is, there exist 
    $x' = g_0x, y'=g_1y$ such that $y' \in T_a(x')$ and $g_0, g_1 \in B_{\on{SL}_n(\RR)}(I; r_0)$.
    Thus $g_0^{-1}y' = g_0^{-1}g_1y \in g_0^{-1}T_a(x') = T_a(x)$. On the other hand, 
    $g_0^{-1}y' = g_0^{-1} g_1y$ satisfies 
    $d_{X_n}(g_0^{-1} g_1y, y)\le 2r_0$. 
    
    Altogether, we have proved that if $\left\|\left.T^{\rm F}_a\right|_{L^2_0(X_n)}\right\| < v_{r_0}$ and $r_0 \le \min(\on{inj}(x), \on{inj}(y))$ then there exists $x'' = g_0^{-1} g_1y$ with $x''\in T_a(x)$ and $d_{X_n}(x'', y)\le 2r_0$. 
    Now, let $\cK\subset X_n$ be a compact set. Denote the minimum of the injectivity radius on $\cK$ by $r_\cK>0$. 
    If $v_{r_\cK}>\left\|\left.T^{\rm F}_a\right|_{L^2_0(X_n)}\right\|$, then we can find $r_0 = \Theta\left(\left\|\left.T^{\rm F}_a\right|_{L^2_0(X_n)}\right\|^{1/(n^2-1)}\right)$ with $r_0<r_\cK$ and $v_{r_0}>\left\|\left.T^{\rm F}_a\right|_{L^2_0(X_n)}\right\|$, and the desired follows.
    If $v_{r_\cK}\le \left\|\left.T^{\rm F}_a\right|_{L^2_0(X_n)}\right\|$, then the desired follows for $C(\cK) = \frac{\on{diam}(\cK)}{(v_{r_\cK})^{1/(n^2-1)}}$.
\end{proof}

\subsection{Construction of special number field}\label{COA}
\begin{theorem}\label{thm: special field}
  For every prime number $p \equiv 1 \mod 2n$ sufficiently big as a function of $n$, one can find a totally real number field $K$ with the following properties:
  \begin{enumerate}[label=\emph{(\emph{\alph*})}, ref=(\emph\alph*)]
	\item 
		The unit group $\mathcal{O}_K^\times$ contains units $u_1,\dots,u_{n}$ with $u_1 u_2\cdots u_n = 1$. 
	\item One can order the real embeddings $\sigma_1,\dots,\sigma_n:K\to \RR$ such that $\sigma_i(u_j) > 0$ for all $1\le i,j\le n$, and 
		\begin{align}\label{eq: size of logs}
			\log \sigma_i(u_j) = \begin{cases}
				-2n(n-1)\log p + O(1),&  \text{if }i=j,\\
				2n\log p + O(1),&  \text{if }i\neq j.
			\end{cases}
		\end{align}
	\item \label{point: unit in ring} The units $u_i$ lie in the ring $\ZZ+p\mathcal{O}_K$. 
  \end{enumerate}
  
\end{theorem}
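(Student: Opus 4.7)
The construction is a Cassels--Shapira-style explicit number field. Set $b_i := p(i-1)$ for $1\le i\le n$, all congruent to $0$ modulo $p$, and consider the monic polynomial
\[f(x) := \prod_{i=1}^n(x-b_i) - 1 \in \ZZ[x].\]
Let $\theta$ be a root, $K := \QQ(\theta)$, $v_j := \theta - b_j$, and take
\[u_j := v_j^{2n} \qquad (1\le j\le n).\]
The ``base'' elements $v_j$ turn out to be units of norm $\pm 1$; the $2n$-th powers are chosen so that $u_j$ is simultaneously totally positive (even power) and congruent to $1 \pmod{p\mathcal{O}_K}$ (since $\theta^n \equiv 1 \pmod{p\mathcal{O}_K}$, as shown below), which is what will place $u_j$ in $\ZZ + p\mathcal{O}_K$.

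\textbf{The verification proceeds in five steps.} (i) Because $\prod_{i\neq j}(b_j-b_i) = \Theta(p^{n-1})$, a Rouch\'e-type comparison of $f$ near each $b_j$ produces a unique real root $\theta_j = b_j+\delta_j$ with $|\delta_j| = \Theta(p^{-(n-1)})$; these account for all $n$ roots of $f$, so $K$ is totally real. (ii) For irreducibility, suppose $f=gh$ with $g\in\ZZ[x]$ monic of degree $|S|\in\{1,\dots,n-1\}$ where $S$ indexes the roots of $g$, and pick $j\in S$. Then
\[|g(b_j)| = |b_j-\theta_j|\prod_{i\in S\setminus\{j\}}|b_j-\theta_i| = \Theta(p^{-(n-1)})\cdot\Theta(p^{|S|-1}) = \Theta(p^{|S|-n}) < 1\]
for $p$ large, contradicting $g(b_j) \in \ZZ\setminus\{0\}$ from $g(b_j)h(b_j)=f(b_j)=-1$; hence $[K:\QQ]=n$. (iii) The identity $\prod_j v_j = f(\theta)+1 = 1$ gives $\prod_j u_j=1$, and the norm $N_{K/\QQ}(v_j) = (-1)^n f(b_j) = (-1)^{n+1}$ shows $v_j\in\mathcal{O}_K^\times$, hence $u_j\in\mathcal{O}_K^\times$. (iv) From
\[|\sigma_k(v_j)| = |\theta_k-b_j| = \begin{cases}|\delta_j| = \Theta(p^{-(n-1)}) & k=j,\\ |b_k-b_j|(1+O(p^{-n})) = \Theta(p) & k\neq j,\end{cases}\]
raising to the $2n$-th power yields $|\sigma_k(u_j)| = \Theta(p^{-2n(n-1)})$ for $k=j$ and $\Theta(p^{2n})$ for $k\neq j$, which on taking logarithms is precisely \eqref{eq: size of logs} with $d=n$. (v) Working modulo $p\ZZ[\theta]$, every $b_i$ vanishes, so $f(\theta)=0$ collapses to $\theta^n\equiv 1$; consequently $u_j = v_j^{2n} \equiv \theta^{2n} \equiv 1 \pmod{p\mathcal{O}_K}$, placing $u_j \in 1+p\mathcal{O}_K \subseteq \ZZ+p\mathcal{O}_K$. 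Ordering the embeddings as $\sigma_k(\theta):=\theta_k$ with $\theta_1<\cdots<\theta_n$ delivers positivity.

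\textbf{Main obstacle.} The principal design tension is that the $p$-adic integrality condition $u_j\in\ZZ+p\mathcal{O}_K$ and the prescribed logarithmic sizes must hold for the same elements. The exponent $2n$ in $u_j = v_j^{2n}$ is essentially forced: an $n$-th power is required for the congruence $v_j^n \equiv \theta^n \equiv 1 \pmod{p\mathcal{O}_K}$ (a smaller power would reduce to non-constant elements across the factors of $\mathcal{O}_K/p\mathcal{O}_K$, hence not lie in $\ZZ+p\mathcal{O}_K$), and an additional factor of $2$ forces total positivity. The anchor spacing $b_i = p(i-1)$ is then the unique natural calibration: all $b_i$ must vanish mod $p$ (so that $f(\theta)=0$ collapses to $\theta^n\equiv 1$), and they must differ by $\Theta(p)$ so that $v_j^{2n}$ lands on precisely the target sizes $-2n(n-1)\log p$ and $2n\log p$. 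The irreducibility estimate in step (ii) is the only place where $p$ must be taken sufficiently large as a function of $n$.
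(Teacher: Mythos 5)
Your construction is correct: all five steps check out, and the resulting units $u_j=(\theta-b_j)^{2n}$ satisfy the three required properties (the product is $1$ because $\prod_j(\theta-b_j)=f(\theta)+1=1$; the sizes $-2n(n-1)\log p$ and $2n\log p$ match \eqref{eq: size of logs} with $d=n$; and $u_j\equiv\theta^{2n}\equiv 1 \pmod{p\mathcal{O}_K}$ since $f\equiv x^n-1 \pmod p$). The overall skeleton is the same Cassels-type construction as in the paper: a polynomial $\prod_i(x-c_i)-1$ with well-separated anchors, units coming from the linear factors, irreducibility via the norm of a hypothetical factor evaluated at an anchor, and root localization by a first-order expansion. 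Where you genuinely diverge is in how Property \ref{point: unit in ring} is engineered. The paper takes the anchors $a_i$ to be Hensel lifts of the roots of $x^n+1$ modulo $p^n$, spaced $\Theta(p^n)$ apart, and works with $R(x)=p^{-n}\bigl(\prod_i(px-a_i)-1\bigr)$; then $p\alpha-a_i\equiv -a_i\pmod{p\mathcal{O}_K}$ is already an integer, so only a square is needed (for positivity). You instead force all anchors to vanish mod $p$, so the base units reduce to $\theta$ with $\theta^n\equiv 1$, and you compensate by taking the $2n$-th power; the anchor spacing $\Theta(p)$ rather than $\Theta(p^n)$ then lands the logarithms on exactly the same values. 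Your route buys a real simplification --- no Hensel's lemma, no verification that $R$ has integer coefficients, and in fact no use of the hypothesis $p\equiv 1\bmod 2n$ at all (which is harmless, since the theorem only asserts existence for such $p$) --- at the cost of a slightly less canonical choice of units. Either version feeds identically into Corollary \ref{cor: compactOrbitsConstruction2}.
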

\begin{proof}
  Since $p\equiv 1 \mod 2n$, the polynomial $x^n + 1$ has a $n$ different solutions mod $p$. By Hensel's Lemma (See \cite{conrad2015hensel}), it has $n$ solutions mod $p^n$, call them $a_0',\dots,a_{n-1}'\in \{1,\dots,p^n - 1\}$.
  Let $a_i = a_i' + 2ip^n$ for $i=0,\dots,n-1$, and note that $p^n +1 \le a_{i+1}-a_i\le 3p^n$ for all $0\le i\le n-2$. 
  consider now the polynomial $R(x) = \frac{(px - a_1)(px - a_2)\cdots (px - a_n) - 1}{p^n}$. It has integer coefficients since by assumption, $\sum_{i_1<\dots<i_k}a_{i_1}\cdots a_{i_k} = 0 \mod{p^n}$ for all $k=1,\dots,n-1$ and $a_1a_2\cdots a_n = (-1)^n \mod{p^n}$. 
  
  To control the real embeddings of the resulting number field, we will approximate the real roots of $R$. 

  By Taylor's theorem (see \cite[\S 20.3]{kline1998calculus}) applied at $a_{i_0}/p$ we have 
  \[R(x) = R(a_{i_0}/p) + R'(a_{i_0}/p) (x-a_{i_0}/p) + \frac{1}{2}R''(x')(x-a_{i_0}/p)^2,\]
  for some $x'\in [a_{i_0}/p, x]$ (this interval notation does not assume $a_{i_0}/p\le x$). 
  Note that 
  \[R(a_{i_0}/p) = - \frac{1}{p^n}, \qquad R'(a_{i_0}/p) = \prod_{j\neq i_0}(a_{i_0}/p-a_j/p) = (-1)^{n - i_0} \alpha_{i_0} p^{n^2-2n+1},\] for some $\alpha_{i_0}\ge 1$, with $\alpha_{i_0}=\Theta(1)$,
  and for all $x'\in [-p^{n-1}, 2np^{n-1}]$ we have 
  \[|R''(x')| = \left|\sum_{1\le k<l\le n}\prod_{j\neq k,l}(x-a_j/p)\right| = O(p^{n^2-3n+2}).\]
  That implies that for all $x\in [-p^{n-1}, 2np^{n-1}]$ we have 
  \begin{align}\label{eq: taylor}
    R(x) = -\frac{1}{p^n} + (-1)^{n-i_0} \alpha_{i_0} p^{n^2-2n+1} (x-a_{i_0}/p) + O(p^{n^2-3n+2})(x-a_{i_0}/p)^2,
  \end{align}
  Choose $x_{i_0}'$ with $x_{i_0}'-a_{i_0}/p = 2\frac{(-1)^{n-i_0}}{p^{n^2-n+1}\alpha_{i_0}}$.
  Note that $|x_{i_0}'-a_{i_0}/p|<1$, and hence $x_{i_0}'\in [-p^{n-1}, 2np^{n-1}]$,
  Then 
  \[
    R(x_{i_0}') = -\frac{1}{p^n} + (-1)^{n-i_0} \alpha_{i_0} p^{n^2-2n+1} (x_{i_0}'-a_{i_0}/p) + O(p^{-n^2-n}) = \frac{1}{p^n} + O(p^{-n^2-n}).
    \]
  If $p$ is sufficiently large as a function of $n$ then $R(x_{i_0}')>0$. 
  Since $R(a_{i_0}) = -\frac{1}{p^n} < 0$, the mean value theorem implies that there is $x_{i_0}\in [a_{i_0}/p,x_{i_0}']$ with $R(x_{i_0}) = 0$. 
  By Eq. \eqref{eq: taylor} we deduce that:
  \begin{align}
    0 = R(x_{i_0}) = -\frac{1}{p^n} + (-1)^{n-i_0} \alpha_{i_0} p^{n^2-2n+1} (x_{i_0}-a_{i_0}/p) + O(p^{n^2-3n+2})(x_{i_0}-a_{i_0}/p)^2.
  \end{align}
  This implies:
  \begin{align*}
    \frac{1}{p^n} &= (x_{i_0}-a_{i_0}/p)\left((-1)^{n-i_0} \alpha_{i_0} p^{n^2-2n+1} + O(p^{n^2-3n+2})(x_{i_0}-a_{i_0}/p)\right) \\&
    = (x_{i_0}-a_{i_0}/p)(-1)^{n-i_0} \alpha_{i_0} p^{n^2-2n+1}\left(1 + O\left(p^{-n^2}\right)\right),
  \end{align*}
  and hence, 
  \[x_{i_0} = a_{i_0}/p + \frac{(-1)^{n-i_0}}{p^{n^2-n+1} \alpha_{i_0}} + O\left(\frac1{p^{2n^2-n+1}}\right).\]
  Since $|a_{i_0}/p-x_{i_0}|<1$ we deduce that all $x_i$ are different, and hence these are the distinct roots of $R$. 
  Since $|a_i-a_j|\ge p^n$ for all $i\neq j$ it follows that $|a_i/p-x_j|\ge p^{n-1} - 1 > 1$. 
  It follows that $R = \prod_{i=1}^n(x-x_i)$ is irreducible, as if $R_0 = \prod_{i\in I}(x-x_i)$ is an integer polynomial for some nonempty $I\subsetneq \{1,\dots,n\}$, then in the ring $\ZZ[x]/(R_0)$, the element $px-a_{i_0}$ is a unit for every $i_0\notin I$, but its norm in $\ZZ[x]/(R_0)$ is $\prod_{i\in I}(px_i-a_{i_0})$, which is greater than $1$ in absolute value.

  The number field $K=\QQ[\alpha]$ where $R(\alpha)=0$ has units $u_i' = p\alpha - a_i$. Let $u_i = (u_i')^2$. 
  The real embeddings of $K$ are $\sigma_i:\alpha\mapsto x_i$. They satisfy $\log \sigma_i(u_j) = 2 \log |px_i-a_j|$. 
  Now Eq. \eqref{eq: size of logs} follows from the properties of $x_i$.
\end{proof}



\subsection{Application of Hecke operator to compact orbits}
In this section, we introduce the following lemma, which describes how to verify that a Hecke operator splits a compact orbit into sufficiently many compact diagonal orbits.
\begin{lemma}\label{lem: Hecke effect on compact orbit}
    Let $K$ be a totally real number field of degree $n$, let $x_{\cO_K}\in X_n$ the point with compact orbit corresponding to the $\ZZ$-module $\cO_K$ and let $p, k_1\le \dots\le k_n, a = a_{p; k_1, k_2, \dots, k_n}\in \on{SL}_n(\RR)$ as in Definition \ref{def: Hecke}. Let $U\subseteq \mathcal{O}_K^{\times, >0}$ be a subgroup and $A_U\subset A$ the corresponding diagonal subgroup as in Definition \ref{def: compact orbit}. If $U \subseteq \ZZ+p^k \mathcal{O}_K$ and $0\le k_1 \le k_n\le k$ then $A_U \subseteq \stab_A(x')$ for every $x'\in T_a(x_{\cO_K})$. 
\end{lemma}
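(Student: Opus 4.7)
The plan is to translate the claim into a purely algebraic statement about $\mathbb{Z}$-sublattices of $\mathcal{O}_K$ and then verify that statement directly using the decomposition $u = m + p^k v$ guaranteed by $U \subseteq \mathbb{Z} + p^k\mathcal{O}_K$.

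First I will recall the identification of Hecke neighbors with sublattices. Every $x' \in T_a(x_{\mathcal{O}_K})$ has the form $x' = \text{cov}(\Lambda')^{-1/n}\Lambda'$ where $\Lambda' \subseteq \sigma(\mathcal{O}_K)$ is a $\mathbb{Z}$-sublattice with $\sigma(\mathcal{O}_K)/\Lambda' \cong \mathbb{Z}/p^{k_1}\mathbb{Z} \oplus \cdots \oplus \mathbb{Z}/p^{k_n}\mathbb{Z}$. Pulling back via $\sigma$, such $\Lambda'$ correspond bijectively to $\mathbb{Z}$-sublattices $\Lambda \subseteq \mathcal{O}_K$ with $\mathcal{O}_K/\Lambda \cong \mathbb{Z}/p^{k_1}\mathbb{Z} \oplus \cdots \oplus \mathbb{Z}/p^{k_n}\mathbb{Z}$. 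The key observation is that the exponent of this abelian group is $p^{k_n}$, hence $p^{k_n}\mathcal{O}_K \subseteq \Lambda$, and since $k_n \le k$, in fact $p^k\mathcal{O}_K \subseteq \Lambda$.

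Next I will translate the $A_U$-action on $x'$ to multiplication in $K$. For $u \in U \subseteq \mathcal{O}_K^{\times,>0}$, the diagonal matrix $\text{diag}(\sigma_1(u),\dots,\sigma_n(u)) \in A_U$ satisfies $\sigma(u)\cdot \sigma(\lambda) = \sigma(u\lambda)$ for every $\lambda \in \mathcal{O}_K$; moreover total positivity of $u$ and $u \in \mathcal{O}_K^\times$ give $N_{K/\mathbb{Q}}(u) = 1$, so $\sigma(u) \in A$ and preserves covolumes. Consequently, showing $A_U \subseteq \stab_A(x')$ reduces to showing $u\Lambda = \Lambda$ as $\mathbb{Z}$-sublattices of $\mathcal{O}_K$.

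The main step is the following direct check. Write $u = m + p^k v$ with $m \in \mathbb{Z}$ and $v \in \mathcal{O}_K$. For any $\lambda \in \Lambda$, $u\lambda = m\lambda + p^k v\lambda$; the first summand lies in $\Lambda$ since $\Lambda$ is a $\mathbb{Z}$-submodule, and the second summand lies in $p^k\mathcal{O}_K \subseteq \Lambda$ by the containment from the first paragraph. Hence $u\Lambda \subseteq \Lambda$, and applying the same argument to $u^{-1} \in U$ (which also lies in $\mathbb{Z} + p^k\mathcal{O}_K$ because $U$ is a subgroup) yields the reverse inclusion, giving $u\Lambda = \Lambda$ and completing the proof. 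I do not foresee genuine obstacles; the only care needed is to keep straight that $\Lambda$ is only a $\mathbb{Z}$-module (not an $\mathcal{O}_K$-module), which is why the hypothesis $U \subseteq \mathbb{Z} + p^k\mathcal{O}_K$ rather than just $U \subseteq \mathcal{O}_K^\times$ is essential: it is what allows the non-integer part of $u$ to be absorbed into the containment $p^k\mathcal{O}_K \subseteq \Lambda$.
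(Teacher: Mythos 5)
Your proposal is correct and is essentially the paper's own argument: the paper phrases the key step as ``multiplication by $u$ acts on $\cO_K/p^k\cO_K$ as the scalar $m$, hence fixes every intermediate subgroup $p^k\cO_K \subseteq \Lambda \subseteq \cO_K$,'' which is exactly your element-wise computation $u\lambda = m\lambda + p^k v\lambda \in \Lambda + p^k\cO_K \subseteq \Lambda$ together with the observation that the quotient $\cO_K/\Lambda$ has exponent dividing $p^{k_n}\mid p^k$. Your explicit check that $u^{-1}\in U$ handles the reverse inclusion just as well as the paper's appeal to the scalar action preserving all subgroups.
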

\begin{proof}
    Let $u\in U$. Since $u\in \ZZ+p^k \mathcal{O}_K$, there is $m\in\ZZ$ such that $u\equiv m\mod{p^k}$. Thus for every $\bar b\in \mathcal{O}_K/(p^k)$ we have $u\bar b = m\bar b$, that is, the multiplication by $u$ action on $\cO_K/(p^k)$ is in fact a multiplication by a scalar.
    Hence, the element $a_u = \diag(\sigma_1(u), \sigma_2(u), \dots, \sigma_n(u))\in A_U$ preserves $x_{\cO_K}$, $p^kx_{\cO_K}$ and acts on the quotient by multiplication by the scalar $m$. This implies that $a_u$ acts on the set of mid-groups $\{\Lambda: p^kx_{\cO_K}\subseteq \Lambda \subseteq x_{\cO_K}\}$ trivially.  Indeed, there is an isomorphism \[\{\Lambda: p^kx_{\cO_K}\subseteq \Lambda \subseteq x_{\cO_K}\}\cong \{\bar\Lambda \subseteq x_{\cO_K}/p^k x_{\cO_K}\},\]
    and $u$ acts on the right-hand side as a multiplication by a scalar, which preserves all groups.
    This implies that $a_u$ preserves all Hecke neighbors, by the second construction (See Definition \ref{def: Hecke}).
\end{proof}
\begin{remark}
    Lemma \ref{lem: Hecke effect on compact orbit} remains true if we replace $x_{\cO_K}$ by $x_\Lambda$ for some $\Lambda\in {\rm Lat}_K$ and $U$ by a subgroup of $\cO_\Lambda$ satisfying $U \subseteq \ZZ+p^k \mathcal{O}_\Lambda$. The proof is similar.
\end{remark}
\begin{corollary}\label{cor: compactOrbitsConstruction2}
    Let $p, k_1\le \dots\le k_n, a = a_{p; k_1, k_2, \dots, k_n}\in \on{SL}_n(\RR)$ as in Definition \ref{def: Hecke}. There is a compact subset $\cK\subset X_n$ such that for any prime $p\equiv 1\mod{2n}$ large enough as a function of $n$, there exists $x_p\in \cK$, and a lattice $\Lambda \subset \RR^{n-1}_0$ such that,
    \begin{enumerate}[label=\textit{\emph{\arabic*.}}, ref=\arabic*]
        \item \label{part: group stabilizing} For every Hecke operator $T_a$ with $0\le k_1\le \dots\le k_n\le 1$ and for every element $y\in T_a(x_p)$ we have that $\exp(\Lambda)\subseteq A$ stabilize $y$. 
        \item \label{part: gens of lattice} The lattice $\Lambda$ is generated by vectors $(v^{(i)})_{i=1}^{n-1}$ satisfying $\left|\left( v^{(i)} \right)_j - (2n - 2n^2\delta_{ij})\log p\right| = O(1)$.
    \end{enumerate}
\end{corollary}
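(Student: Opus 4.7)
My plan is to construct $x_p$ as a suitable $A$-translate of $x_{\cO_K}$ on the compact orbit associated to the number field $K$ produced by Theorem~\ref{thm: special field}, and to certify the stabilization property via Lemma~\ref{lem: Hecke effect on compact orbit}. For the given prime $p \equiv 1 \bmod 2n$ sufficiently large, apply Theorem~\ref{thm: special field} to obtain a totally real degree-$n$ number field $K$ together with units $u_1, \ldots, u_n \in \cO_K^{\times, >0}$ satisfying $u_1 \cdots u_n = 1$, $u_j \in \ZZ + p\cO_K$, and the log-embedding control in Eq.~\eqref{eq: size of logs}. Set
\[
v^{(j)} := (\log \sigma_1(u_j), \ldots, \log \sigma_n(u_j)) \in \RR^{n-1}_0
\]
(the coordinate sum vanishes because $|N(u_j)|=1$ and all embeddings of $u_j$ are positive), and let $\Lambda \subset \RR^{n-1}_0$ be the $\ZZ$-lattice generated by $v^{(1)}, \ldots, v^{(n-1)}$. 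Eq.~\eqref{eq: size of logs} directly gives the bound in part~2, and a rank check on the leading-order matrix $\bigl(2n - 2n^2\delta_{ij}\bigr)_{i,j}$ shows the $v^{(j)}$ are linearly independent in $\RR^{n-1}_0$ for $p$ sufficiently large.

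For part~1, each $u_j$ preserves $\cO_K$ by multiplication, so $\exp(v^{(j)}) = \diag(\sigma_1(u_j), \ldots, \sigma_n(u_j))$ stabilizes $x_{\cO_K}$. Because $u_j \in \ZZ + p\cO_K$, applying Lemma~\ref{lem: Hecke effect on compact orbit} with $k=1$ to the subgroup $U = \langle u_1, \ldots, u_{n-1}\rangle$ upgrades this to $\exp(\Lambda) = A_U \subseteq \stab_A(y')$ for every Hecke neighbor $y' \in T_a(x_{\cO_K})$ with $0 \le k_1 \le \cdots \le k_n \le 1$. Now set $x_p := a_0 x_{\cO_K}$ for an $a_0 \in A$ to be chosen. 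Since Hecke operators commute with the left $\on{SL}_n(\RR)$-action we have $T_a(x_p) = a_0 T_a(x_{\cO_K})$, and since $A$ is abelian each element of $\exp(\Lambda)$ still stabilizes $a_0 y'$. Therefore $\exp(\Lambda)$ stabilizes every element of $T_a(x_p)$, which is exactly part~1.

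The main obstacle is choosing $a_0 \in A$ so that $x_p$ lies in a compact $C \subset X_n$ independent of $p$; equivalently $\lambda_1(x_p) \ge c_n$ for a dimensional constant $c_n > 0$. My plan is to exploit the explicit structure from Theorem~\ref{thm: special field}: the conjugates $x_1, \ldots, x_n$ of $\alpha$ have magnitudes $\Theta(p^{n-1})$ and pairwise gaps $\Theta(p^{n-1})$, so a suitable $a_0 = \diag(\mu_1, \ldots, \mu_n)$ with $\prod_i \mu_i = 1$ produces a balanced Minkowski image $a_0\sigma(\cO_K)/\on{disc}(K)^{1/(2n)}$. Combining the norm inequality $\prod_i \mu_i|\sigma_i(\xi)| = |N(\xi)| \ge 1$ with a Vandermonde-type estimate on the coefficients of any candidate short element $\xi = \sum_{k<n} c_k \alpha^k$ is intended to force the integer coefficients of such $\xi$ to vanish whenever all $\mu_i|\sigma_i(\xi)|$ are simultaneously small relative to $\on{disc}(K)^{1/(2n)}$, yielding the uniform lower bound $\lambda_1(a_0 x_{\cO_K}) \ge c_n$. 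Taking $C = \{x \in X_n : \lambda_1(x) \ge c_n\}$ then provides the desired compact set containing $x_p$, completing the proof.
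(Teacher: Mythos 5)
Your construction of $\Lambda$ and the verification of both parts of the corollary follow the paper's proof almost verbatim: take $K$ and $u_1,\dots,u_n$ from Theorem~\ref{thm: special field}, let $\Lambda$ be generated by the logarithmic embeddings of $u_1,\dots,u_{n-1}$ (Part~2 is then Eq.~\eqref{eq: size of logs}), get stabilization of all Hecke neighbors of $x_{\cO_K}$ from Lemma~\ref{lem: Hecke effect on compact orbit} together with $u_j\in\ZZ+p\cO_K$, and transport everything by an element $a_0\in A$ using the commutation of $T_a$ with the $\on{SL}_n(\RR)$-action and the commutativity of $A$. All of that is correct, and your explicit remark that $A$ being abelian is what lets $\exp(\Lambda)$ survive the translation is a point the paper leaves implicit.

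The one place you diverge is the only nontrivial remaining step: why $a_0$ can be chosen so that $x_p=a_0x_{\cO_K}$ lies in a compact set \emph{independent of $p$}. The paper simply invokes \cite[Proposition A.1]{tomanov2003closed}, which says that a single compact subset of $X_n$ meets every (closed, in particular every compact) $A$-orbit. You instead sketch a hands-on argument, and as written it has a gap. The inequality $\prod_i\mu_i|\sigma_i(\xi)|=|N(\xi)|\ge 1$ is applied to the lattice \emph{before} the unimodular normalization; after dividing by $\operatorname{cov}(\sigma(\cO_K))^{1/n}=\on{disc}(K)^{1/(2n)}$ it only yields $\lambda_1(a_0x_{\cO_K})\ge \on{disc}(K)^{-1/(2n)}$, and for the fields of Theorem~\ref{thm: special field} the discriminant grows polynomially in $p$ raised to a power like $n^2$, so this bound tends to $0$. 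Thus the norm inequality alone cannot produce a $p$-independent constant $c_n$; the "Vandermonde-type estimate on the coefficients" that is supposed to rescue this is only announced ("is intended to force"), not carried out, and it is exactly where the real work lies. Either complete that computation for this specific family (which is delicate, since the relevant short vectors need not be powers of $\alpha$), or replace the whole paragraph by the citation the paper uses: every compact $A$-orbit intersects a fixed compact subset of $X_n$.
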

\begin{proof}
    Fix $p\equiv 1\mod {2n}$ sufficiently big for Theorem \ref{thm: special field}. 
    Let $K$ be the number field constructed in Theorem \ref{thm: special field}, and $u_1,\dots,u_n$ the corresponding units. Let $(\sigma_i)_{i=1}^n$ be the real embeddings of $K$.
    Let $\Lambda$ be the subgroup of $\RR^{n-1}_0$ generated by $(\log \sigma_i(u_j))_{i=1}^n$ for $j=1,\dots,n-1$. The bounds on $u_j$ in Theorem \ref{thm: special field} imply Part \ref{part: gens of lattice}. 
    Consider the point $x_{\cO_K}\in X_n$. 
    It follows from \cite[Proposition A.1]{tomanov2003closed} that there is a compact set $\cK\subseteq X_n$ which intersects every $A$-orbit. 
    Hence there is $x_p = mx_{\cO_K}\in \cK$ for some $m\in A$. 
    Since the $\on{SL}_n(\RR)$ action on $X_n$ commutes with $T_a$, to prove part \ref{part: group stabilizing} it is sufficient to prove that $\exp(\Lambda)\subseteq A$ stabilize $y$ for every $y\in T_a(x_{\cO_K})$. 
    This follows from Lemma \ref{lem: Hecke effect on compact orbit} and Property \ref{point: unit in ring} of Theorem \ref{thm: special field} for the units $u_1,\dots,u_k$. 
\end{proof}
\subsection{Proof of Lemma \ref{lem: fastApproximation2}}
    The points we construct are indexed by primes $p\equiv 1\mod{2n}$. Fix such a prime $p$. Denote $t_p = 2n\log p$ and let $x_p$ and $\Lambda$ be as in Corollary \ref{cor: compactOrbitsConstruction2}. 
    Note that Corollary \ref{cor: compactOrbitsConstruction2} implies the exact bounds on a set of generators of $\Lambda$ we need.
    Let $k_1=\dots=k_{\lfloor n/2\rfloor}=0, k_{{\lfloor n/2\rfloor}+1}=\dots=k_n=1$, and $a = a_{p; k_1, k_2, \dots, k_n}\in A$ as in Definition \ref{def: Hecke}. 
    We claim that for some $y_p\in T_ax_p$ we have $d_{X_n}(y,y_p)\ll\exp(-\frac{\lfloor n/2\rfloor}{4n(n^2-1)}t_p)$. 
    Indeed, by Theorem \ref{thm: equidistributionOfHecke} we have $\left\|\left.T^{\rm F}_a\right|_{L^2_0(X_n)}\right\| \le p^{-\lfloor n/2\rfloor/2}$. Thus by Lemma \ref{lem: heckeApproximation}, there is $y_p\in T_a(x_p)$ with \[d_{X_n}(y_p, y) = O(p^{-\frac{\lfloor n/2\rfloor}{2(n^2-1)}}) = O\left( \exp\left( t_p\cdot \frac{\lfloor n/2\rfloor}{4n(n^2-1)} \right) \right). \]
    Note that the implicit constant was provided by Theorem \ref{thm: equidistributionOfHecke} and depends only on the compact set containing $y$.\qed
\section{Approximation of Invariant Measures}\label{sec: gluing lemma}
In this section, we prove Theorem \ref{thm: ergodicDecomp}. We approximate measures by approximating their generic points. When we want to approximate a finite collection of measures simultaneously, we need to find regions of $A$-orbits that approximate all measures on subregions.
The following lemma is our tool for this purpose:
\begin{lemma}[Higer-rank closing lemma]\label{lem: diagonal closing lemma}
    For every compact set $\cK\subseteq X_n$ there are compact sets $\cK_A\subset A, \cK_L \subset L, \cK_U\subset U$, such that for every $x_0,x_1\in \cK$ there are $k_A\in \cK_A, k_L\in \cK_L, k_U\in \cK_U$ such that $k_Lk_Ak_Ux_0 = x_1$. 
    Here $A,L, U$ are defined as in Definition \ref{def: subgroups}. 
\end{lemma}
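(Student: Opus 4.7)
The plan is to exploit the open-dense Bruhat-type subset $UAL \subseteq \on{SL}_n(\RR)$: the multiplication map $\mu \colon U \times A \times L \to \on{SL}_n(\RR)$ is a diffeomorphism onto its image $UAL$, which is precisely the Zariski-open locus where the bottom-right $k \times k$ minors are nonzero for every $k = 1, \dots, n-1$ (this is the big cell for the opposite Bruhat decomposition, obtained from the standard $LAU$ cell by conjugating with the antidiagonal permutation). First I lift $K$ to a compact set $\tilde K \subseteq \on{SL}_n(\RR)$ with $\pi(\tilde K) = K$, where $\pi \colon \on{SL}_n(\RR) \to X_n$ is the natural projection. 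Then for $(x_0, x_1) \in K \times K$ with lifts $(g_0, g_1) \in \tilde K \times \tilde K$, the task reduces to finding $\gamma \in \on{SL}_n(\ZZ)$ with $g_1 \gamma g_0^{-1} \in UAL$; the $\mu$-factorization of this element then supplies the desired triple $(k_U, k_A, k_L)$, and indeed $k_U k_A k_L x_0 = g_1 \gamma g_0^{-1} \cdot g_0 \on{SL}_n(\ZZ) = g_1 \on{SL}_n(\ZZ) = x_1$.

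Such a $\gamma$ exists for every pair $(g_0, g_1)$: the translate $g_1^{-1} UAL g_0$ is a nonempty Zariski-open subset of $\on{SL}_n(\RR)$, and since $\on{SL}_n(\ZZ)$ is Zariski-dense in $\on{SL}_n(\RR)$ (Borel density theorem, or directly from the fact that the elementary unipotent matrices generate a Zariski-dense subgroup), the intersection $\on{SL}_n(\ZZ) \cap g_1^{-1} UAL g_0$ is nonempty.

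For the uniformity, consider for each $\gamma \in \on{SL}_n(\ZZ)$ the open set $V_\gamma := \{(g_0, g_1) \in \on{SL}_n(\RR)^2 : g_1 \gamma g_0^{-1} \in UAL\}$. The previous paragraph shows the family $\{V_\gamma\}_\gamma$ covers $\tilde K \times \tilde K$. By local compactness, each point $(g_0, g_1) \in \tilde K^2$ admits a compact neighborhood (in $\on{SL}_n(\RR)^2$) contained in some $V_\gamma$; extracting a finite subcover from the compact set $\tilde K^2$ yields compact sets $C_1, \dots, C_N$ and corresponding indices $\gamma_1, \dots, \gamma_N \in \on{SL}_n(\ZZ)$ with $C_j \subseteq V_{\gamma_j}$ and $\bigcup_j C_j \supseteq \tilde K^2$. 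On each $C_j$ the continuous map $(g_0, g_1) \mapsto g_1 \gamma_j g_0^{-1}$ has compact image inside $UAL$; applying the continuous $\mu^{-1}$ on that image produces compact subsets $K_U^{(j)} \subseteq U$, $K_A^{(j)} \subseteq A$, $K_L^{(j)} \subseteq L$ containing the factors for all pairs in $C_j$. Setting $K_U := \bigcup_j K_U^{(j)}$, and likewise $K_A, K_L$, completes the argument.

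The main obstacle I foresee is ensuring the refinement $C_j \subseteq V_{\gamma_j}$ can be taken compact inside the open $V_{\gamma_j}$: if one merely worked with the open sets $V_{\gamma_j} \cap (\tilde K \times \tilde K)$, the image $g_1 \gamma_j g_0^{-1}$ could drift toward $\partial(UAL)$, where $\mu^{-1}$ is unbounded and no uniform compact bound on the factors could be asserted. Local compactness of $\on{SL}_n(\RR)^2$ circumvents this cleanly. The existence input via Zariski density is standard and presents no real difficulty.
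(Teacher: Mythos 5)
There is a genuine gap in your existence step, and it sits exactly where the paper has to do real work. You identify $UAL$ with the Zariski-open big cell consisting of the matrices whose bottom-right $k\times k$ minors are nonzero for all $k$. That identification fails here because $A$ is the group of diagonal matrices with \emph{positive} entries: the $k\times k$ bottom-right minor of $ual$ equals the product of the last $k$ diagonal entries of $a$, so $UAL$ is the locus where these minors are all \emph{positive}. This is one of the $2^{n-1}$ connected components of the big cell $UDL$ (with $D$ the full diagonal torus); it is open in the Euclidean topology and semialgebraic, but it is not Zariski open. Consequently, Zariski density of $\on{SL}_n(\ZZ)$ only yields $\gamma$ with $g_1\gamma g_0^{-1}\in UDL$, with no control on the signs of the diagonal part. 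For $n=2$ you can repair this by replacing $\gamma$ with $-\gamma$, but for $n\ge 3$ the sign defect $\epsilon$ is a non-central diagonal sign matrix and cannot be absorbed into the lattice on the correct side: $g_1\gamma g_0^{-1}\epsilon^{-1} = g_1\gamma\,(g_0^{-1}\epsilon^{-1}g_0)\,g_0^{-1}$ and $g_0^{-1}\epsilon^{-1}g_0\notin \on{SL}_n(\ZZ)$ in general. The positivity of the $A$-component is not cosmetic: the lemma is applied in Corollary \ref{cor: Gluing boxes} with $k_A=\exp(v_0)$ for $v_0\in\RR^{n-1}_0$.

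The paper avoids this by an argument that works in the Hausdorff topology, where $UALx_0$ \emph{is} open: it takes a Birkhoff-generic point $u_0x_1$ on the expanding horosphere $Ux_1$ and uses that the open set $UALx_0$ has positive Haar measure, so the forward $a_t$-orbit of $u_0x_1$ must enter it; this lands in the correct sign component automatically. Your second half is fine: the finite cover of $\tilde K^2$ by compact sets $C_j\subseteq V_{\gamma_j}$ together with continuity of the inverse of the product map on $UAL$ is a correct, and arguably more explicit, substitute for the paper's upper-semicontinuity argument for the uniform bound. So the compactness part of your proposal stands; the existence part needs to be replaced by an argument that can see connected components rather than Zariski-open sets, for instance the paper's genericity argument.
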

\begin{proof}
    First, we will show that for every $x_0, x_1\in X_n$ we have $x_1\in LAUx_0$. 
    Since $L$ is the expanding horosphere of $a_t$, almost every point in $Lx_1$ is generic with respect to the forward $a_t$ action and the Haar measure $m_{X_n}$. Choose a $a_t$-generic point point $l_1 x_1\in Lx_1$. 
    The $LU$-decomposition implies that the product map $L\times A\times U\to \on{SL}_n(\RR)$ is one-to-one. Since $L\times A\times U$ and $\on{SL}_n(\RR)$ are $n^2-1$ dimensional, this implies that $LAU\subseteq \on{SL}_n(\RR)$ is open.
    Hence $LAUx_0$ is an open set in $X_n$, and hence of positive measure. 
    Hence:
    \begin{equation}
    	\frac{1}{T}\int\nolimits_0^T \chi_{a_tu_0x_0 \in LAUx_0}\bd t\xrightarrow{T\to \infty} m_{X_n}(LAUx_0) > 0
    \end{equation}
	and hence for some $t>0$ we have $a_tl_1x_1 \in LAUx_0$. 
    Since $AL=LA$ we deduce that $x_1\in LAUx_0$.


    Let $\|\cdot\|_A: A\to[0,\infty), \|\cdot\|_L: L\to [0,\infty), \|\cdot\|_U: U\to [0,\infty)$ be the distance from the identity. 
    Define by $f: \cK^2\to [0,\infty)$ the map attaching \[f(x_0, x_1)=\inf\{\max(\|a\|_A, \|l\|_L, \|u\|_U): a\in A, l\in L, u\in U: x_1 = laux_0\}.\]
    Since $LAU$ is open and the multiplication map $L\times A\times U\to LAU$ is a homeomorphism, if we have $laux_0=x_1$ for $a\in A, l\in L, u\in U$, then for every sufficiently close $\hat x_0\sim x_0, \hat x_1\sim x_1$ there are $\hat a\in a, \hat l\in L, \hat u\in U$ arbitrarily close to $a, l,u$ such that $\hat l\hat a\hat u \hat x_0=\hat x_1$. This implies that $f$ is upper semicontinuous. 
    In particular, for every $(x_0,x_1)\in \cK^2$ we get that $f$ is bounded in a neighborhood of $(x_1,x_2)$. Since $\cK^2$ is compact this implies that $f$ is bounded everywhere, as desired.
\end{proof}
\begin{definition}
For $v,v'\in \RR^{n-1}_0$, we say that $v\preceq v'$ if for all $i=1,\dots,n-1$ we have 
    $v_{i+1}-v_i \le v_{i+1}'-v_i'$. 
    For every $v_0\in \RR^{n-1}_0$ with $0 \preceq v_0$, define a \emph{box}
    $S_{v_0} = \{v\in \RR^{n-1}_0: 0\preceq v\preceq v_0\}$.
    A \emph{boxed map} is a map $f: S_{v_0}\to X_n$ of the form $f(v) = \exp(v).x_0$ for some $x_0\in X_n$. 
    For every compact set $\cK\subset X_n$, a boxed map $f: S_{v_0}\to X_n$ is said to be \emph{$\cK$-bounded} if $f(0),f(v_0)\in \cK$. 
    Recall that $w_0 = (\frac{2i-n-1}{2})_{i=1}^n \succeq 0$. 
\end{definition}
\begin{corollary}[Gluing boxes]\label{cor: Gluing boxes}
    Let $\cK\subset X_n$ be compact and $\cK \subset \cK' \subset X_n$
    a compact neighborhood. 
    In this corollary, all $O$ notations depend on $\cK, \cK'$. 
    Let $R>0$ be large enough as a function of $\cK, \cK'$. Fix any two $\cK$-bounded boxed maps $f_1: S_{v_1}\to X_n, f_2: S_{v_2}\to X_n$ with $v_1,v_2 \succeq Rw_0$. 
    Then there exists a $\cK'$-bounded boxed map $f: S_{v_3}\to X_n$ with 
    \begin{enumerate}[label=\textit{\emph{(\textit{\arabic*})}}]
        \item $v_3 = v_1+ v_2+O_{\cK,\cK'}(1)$;
        \item There are two points $w_1, w_2\in S_{v_3}$
        with $\|w_1\| = o_R(R)$, $\|w_2 - v_1\| = o_R(R)$ 
        and $0<\rho = o_R(R)$ we have that for all $i=1,2$ and $v\in S_{v_i-2\rho w_0}$, 
        \begin{align}\label{eq: big box approx small}
            d_{X_n}\left( 
                f_i(\rho w_0 + v),
                f(w_i + v)
                \right) = o_R(1)
            .
        \end{align}
    \end{enumerate}
    See Figure \ref{fig: boxes} for a visualization of these conditions.
\end{corollary}
\definecolor{zzffff}{rgb}{0.6,1.,1.}
\definecolor{uuuuuu}{rgb}{0.26666666666666666,0.26666666666666666,0.26666666666666666}
\definecolor{zzttqq}{rgb}{0.6,0.2,0.}
\definecolor{ududff}{rgb}{0.30196078431372547,0.30196078431372547,1.}
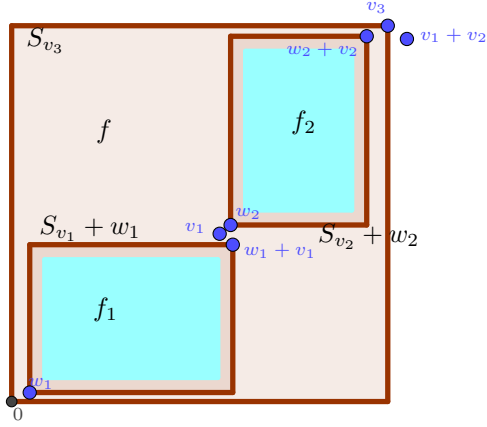
\begin{figure}
    \begin{center}
        \begin{tikzpicture}[line cap=round,line join=round,>=triangle 45,x=1.0cm,y=1.0cm]
            \clip(-0.5095730225518396,-0.3874071194705439) rectangle (6.764163095569009,5.528964283061113);
            \fill[line width=2.pt,color=zzttqq,fill=zzttqq,fill opacity=0.10000000149011612] (0.24,2.0867479281250763) -- (2.9395656192824715,2.0867479281250763) -- (2.9395656192824715,0.12) -- (0.24,0.12) -- cycle;
            \fill[line width=2.pt,color=zzttqq,fill=zzttqq,fill opacity=0.10000000149011612] (0.,0.) -- (0.,5.) -- (5.,5.) -- (5.,0.) -- cycle;
            \fill[line width=2.pt,color=zzttqq,fill=zzttqq,fill opacity=0.10000000149011612] (2.9104470128270585,4.86) -- (4.72,4.86) -- (4.72,2.3488153862237944) -- (2.9104470128270585,2.3488153862237944) -- cycle;
            \fill[line width=2.pt,color=zzffff,fill=zzffff,fill opacity=1.0] (0.44,1.8867479281250763) -- (0.44,0.32) -- (2.7395656192824713,0.32) -- (2.7395656192824713,1.8867479281250763) -- cycle;
            \fill[line width=2.pt,color=zzffff,fill=zzffff,fill opacity=1.0] (4.52,2.5488153862237946) -- (4.52,4.66) -- (3.1104470128270587,4.66) -- (3.1104470128270587,2.5488153862237946) -- cycle;
            \draw [line width=2.pt,color=zzttqq] (0.24,2.0867479281250763)-- (2.9395656192824715,2.0867479281250763);
            \draw [line width=2.pt,color=zzttqq] (2.9395656192824715,2.0867479281250763)-- (2.9395656192824715,0.12);
            \draw [line width=2.pt,color=zzttqq] (2.9395656192824715,0.12)-- (0.24,0.12);
            \draw [line width=2.pt,color=zzttqq] (0.24,0.12)-- (0.24,2.0867479281250763);
            \draw [line width=2.pt,color=zzttqq] (0.,0.)-- (0.,5.);
            \draw [line width=2.pt,color=zzttqq] (0.,5.)-- (5.,5.);
            \draw [line width=2.pt,color=zzttqq] (5.,5.)-- (5.,0.);
            \draw [line width=2.pt,color=zzttqq] (5.,0.)-- (0.,0.);
            \draw [line width=2.pt,color=zzttqq] (2.9104470128270585,4.86)-- (4.72,4.86);
            \draw [line width=2.pt,color=zzttqq] (4.72,4.86)-- (4.72,2.3488153862237944);
            \draw [line width=2.pt,color=zzttqq] (4.72,2.3488153862237944)-- (2.9104470128270585,2.3488153862237944);
            \draw [line width=2.pt,color=zzttqq] (2.9104470128270585,2.3488153862237944)-- (2.9104470128270585,4.86);
            \draw [line width=2.pt,color=zzffff] (0.44,1.8867479281250763)-- (0.44,0.32);
            \draw [line width=2.pt,color=zzffff] (0.44,0.32)-- (2.7395656192824713,0.32);
            \draw [line width=2.pt,color=zzffff] (2.7395656192824713,0.32)-- (2.7395656192824713,1.8867479281250763);
            \draw [line width=2.pt,color=zzffff] (2.7395656192824713,1.8867479281250763)-- (0.44,1.8867479281250763);
            \draw [line width=2.pt,color=zzffff] (4.52,2.5488153862237946)-- (4.52,4.66);
            \draw [line width=2.pt,color=zzffff] (4.52,4.66)-- (3.1104470128270587,4.66);
            \draw [line width=2.pt,color=zzffff] (3.1104470128270587,4.66)-- (3.1104470128270587,2.5488153862237946);
            \draw [line width=2.pt,color=zzffff] (3.1104470128270587,2.5488153862237946)-- (4.52,2.5488153862237946);
            \draw (0.9595930304258272,1.5053023001313062) node[anchor=north west] {$f_1$};
            \draw (3.5935033416200226,3.9936195790484237) node[anchor=north west] {$f_2$};
            \draw (0.9993002210468452,3.87449800718537) node[anchor=north west] {$f$};
            \draw (0.24486359924750284,2.590632177105794) node[anchor=north west] {$S_{v_1}+w_1$};
            \draw (3.95, 2.484746335449746) node[anchor=north west] {$S_{v_2} + w_2$};
            \draw (0.07279910655642474,5.0921851862299174) node[anchor=north west] {$S_{v_3}$};
            \begin{scriptsize}
                \draw [fill=ududff] (2.9395656192824715,2.0867479281250763) circle (2.5pt);
                \draw[color=ududff] (3.0905455937537942,2.001642182894029) node {$~~~~~~~~~~w_1+v_1$};
                \draw [fill=ududff] (0.24,0.12) circle (2.5pt);
                \draw[color=ududff] (0.39045663152456894,0.24129006536223846) node {$w_1$};
                \draw [fill=ududff] (4.72,4.86) circle (2.5pt);
                \draw[color=ududff] (4.559711646731461,4.688495414916236) node {$w_2+v_2~~~~~~~~~$};
                \draw [fill=ududff] (2.9104470128270585,2.3488153862237944) circle (2.5pt);
                \draw[color=ududff] (3.050838403132776,2.5045999307602553) node {$~~w_2$};
                \draw [fill=uuuuuu] (0.,0.) circle (2.0pt);
                \draw[color=uuuuuu] (0.08603483676343075,-0.16901757105494586) node {$0$};
                \draw [fill=ududff] (5.,5.) circle (2.5pt);
                \draw[color=ududff] (4.850897711285594,5.244396083610486) node {$v_3$};
                \draw [fill=ududff] (2.7648539805499923,2.232340960402142) circle (2.5pt);
                \draw[color=ududff] (2.4419948136105,2.306063977655166) node {$v_1$};
                \draw [fill=ududff] (5.25449483248782,4.823896934933907) circle (2.5pt);
                \draw[color=ududff] (5.499448491428888,4.873795637814319) node {$~~~~~~~~v_1+v_2$};
            \end{scriptsize}
        \end{tikzpicture}
    \end{center}
    \caption{Plot of the boxes in Corollary \ref{cor: Gluing boxes}. One can see $S_{v_3}$, and in it $S_{v_1}+w_1, S_{v_2}+w_2$, and in them the regions approximated by $f_1, f_2$. }
    \label{fig: boxes}
\end{figure}
\begin{proof}
    Let $\cK_A\subset A, \cK_L \subset L, \cK_U\subset U$ as in Lemma \ref{lem: diagonal closing lemma}, constructed for $\cK$.
    Let $C_0>0$ be large enough so that $\cK_A = \exp(\cK_0)$ for some compact 
    \[\cK_0\subseteq \{w\in \RR^{n-1}_0:-C_0 w_0 \preceq w\preceq C_0w_0\} = S_{2C_0w_0} - C_0w_0.\]
    Note that 
    \begin{align}\label{eq: L expanding}
        \text{for all~~} T>0, v \succeq Tw_0, k_L\in \cK_L\text{~~we have~~}d_{\on{SL}_n(\RR)}(\exp(-v)k_L\exp(v), I) = O(\exp(-T)),
    \end{align}
    and similarly, 
    \[
        \text{for all~~} T>0, v \succeq Tw_0, k_U\in \cK_U\text{~~we have~~}d_{\on{SL}_n(\RR)}(\exp(v)k_U\exp(-v), I) = O(\exp(-T)). 
    \]
    
    Apply Lemma \ref{lem: diagonal closing lemma} for $f_1(v_1), f_2(0)$, we get that there are 
    $v_0\in \RR^{n-1}_0, k_L\in \cK_L, k_U\in \cK_U$ such that \[|(v_0)_i - (v_0)_{i+1}| < C_0\text{ for all }i=1,\dots,n-1,\] and 
    \[k_L\exp(v_0)k_Uf_2(0) = f_1(v_1).\] 
    Let $\rho = \sqrt{R}$, $v_3 = v_1 + v_2 + v_0, w_1 = 0, w_2 = v_1+v_0$, $x_0 = \exp(-v_1) k_L\exp(v_1) f_1(0)$, $f: S_{v_3}\to X_n$ defined by 
    $f(v) = \exp(v)x_0$ for all $v\in S_{v_3}$.
    Let $v \in S_{v_1 - \rho w_0}$. By the definition of $f$,
    \begin{align*}
        f(v) &= \exp(v)x_0 = \exp(v - v_1) k_L\exp(v_1 - v) f_1(v).
    \end{align*}
    Since $v\preceq v_1 -  \rho w_0$ we get that $v_1 - v \succeq \rho w_0$, and hence by Eq. \eqref{eq: L expanding},
    \[
        d_{\on{SL}_n(\RR)}\left( \exp(v - v_1) k_L \exp(v_1 - v), I \right) = o_R(1),
    \]
    which implies 
    $d_{\on{SL}_n(\RR)}(f(v), f_1(v)) = o_R(1)$. 
    Eq. \eqref{eq: big box approx small} for $i=1$. Substituting $v=0$ yields 
    $d_{\on{SL}_n(\RR)}(f(0),  f_1(0)) = o_R(1)$, which implies that $f(0) \in \cK'$ provided that $R$ is large enough as a function of $\cK,\cK'$. 
    Showing \eqref{eq: big box approx small} for $i=2$ and $f(v_3) \in \cK'$ is done similarly. 


\end{proof}
\begin{corollary}\label{cor: boxes approximation}
	For any $\nu_1, \dots, \nu_k$ ergodic $A$-invariant measures on $X_n$, $c_1,c_2,\dots,c_k > 0$ with $c_1+\dots+c_k = 1$ there exists a collection of lattices $(x_R)_{R>0}\in X_n$ such that for every $i=1,\dots, k$:
	\begin{equation}\label{eq: 18:51}
		\lim_{R\rightarrow \infty}\frac{1}{\on{vol}(S_{R, i})}\left(v\mapsto \exp\left(v\right).x_R\right)_*m_{\RR^{n-1}_0}\mid_{S_{R, i}}=\nu_i,
	\end{equation}
    where $S_{R, i} = (c_1+\dots+c_{i-1})Rw_0 + S_{c_iR w_0}$.
    In addition, the set $\{x_R: R>0\}$ is pre-compact.
\end{corollary}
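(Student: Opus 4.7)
The plan is to build $x_R$ by iteratively applying the gluing lemma (Corollary \ref{cor: Gluing boxes}) to $k+1$ boxed maps: $k$ of them coming from points that generically approximate $\mu_1, \ldots, \mu_k$ on scale $R$, plus a single ``padding'' boxed map inserted at the start so that the block for $\mu_i$ ends up near position $iRw_0$ rather than $(i-1)Rw_0$.

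Choose a compact $K \subset X_n$ with $\mu_i(K) > 1 - \frac{1}{4k}$ for every $i$. Since $A \simeq \RR^{n-1}$ is abelian and $(S_{Rw_0})_{R>0}$ is a Følner family, the pointwise ergodic theorem yields, for $\mu_i$-a.e.\ $y$, the weak convergence $\Phi_R(y) := \frac{1}{\on{vol}(S_{Rw_0})}(\exp(\cdot).y)_* m|_{S_{Rw_0}} \to \mu_i$. Thus for each $\epsilon > 0$ the set $\{y : d(\Phi_R(y), \mu_i) < \epsilon\}$ has $\mu_i$-measure $1 - o_R(1)$, while the ``long-leg'' set $K \cap \exp(-Rw_0).K$ has $\mu_i$-measure at least $1 - \frac{1}{2k}$ by $A$-invariance. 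Intersecting and using a diagonal argument in $\epsilon \to 0$, $R \to \infty$, we find for each large $R$ and each $i \in \{1, \ldots, k\}$ a point $y_i^R \in K$ with $\exp(Rw_0).y_i^R \in K$ and $\Phi_R(y_i^R) \xrightarrow{R \to \infty} \mu_i$; also pick any $y_0^R \in K$ with $\exp(Rw_0).y_0^R \in K$ as a pad. The resulting boxed maps $f_i^R : S_{Rw_0} \to X_n$, $f_i^R(v) = \exp(v).y_i^R$, are all $K$-bounded.

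Fix a nested chain $K = K_0 \subset K_1 \subset \cdots \subset K_k$ of compact neighborhoods. Starting with $g_0^R := f_0^R$, inductively apply Corollary \ref{cor: Gluing boxes} to $(g_{j-1}^R, f_j^R)$ (both contained in $K_{j-1}$ after a harmless enlargement to include $K$), producing a $K_j$-bounded boxed map $g_j^R : S_{V_j^R} \to X_n$ with $V_j^R = (j+1)Rw_0 + o_R(R)$. By the inductive application of the corollary, for every $m \le j$ the restriction of $g_j^R$ to a sub-box centered at $w_m^{(j)} = mRw_0 + o_R(R)$ is $o_R(1)$-close pointwise to $f_m^R$, on a sub-box of volume $(1 - o_R(1))\on{vol}(S_{Rw_0})$. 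Define $x_R$ as the basepoint of $g_k^R$; by construction $x_R \in K_k$, a fixed compact set, so $\{x_R : R > 0\}$ is pre-compact.

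Finally, fix $i \in \{1, \ldots, k\}$ and $f \in C_c(X_n)$. Uniform continuity of $f$ on $K_k$ combined with the $o_R(R)$ positional shift $\alpha_i := iRw_0 - w_i^{(k)} + \rho w_0 = o_R(R)$ (absorbed by a change of integration variable $v \mapsto v - \alpha_i$) yields
\[
\frac{1}{\on{vol}(S_{Rw_0})}\int_{S_{Rw_0}} f(\exp(v + iRw_0).x_R)\,dv = \frac{1}{\on{vol}(S_{Rw_0})}\int_{S_{Rw_0}} f(\exp(v).y_i^R)\,dv + o_R(1),
\]
and the right-hand side tends to $\int f\,d\mu_i$ by our construction of $y_i^R$. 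The principal obstacle is the bookkeeping across the $k$ iterated gluings: each one adds $o_R(R)$ positional shifts and $O(1)$ additive box-size overhead, and enlarges the ambient compact from $K_{j-1}$ to $K_j$. Since $k$ is fixed, the accumulated shifts remain $o_R(R)$, the final box size is $(k+1)Rw_0 + O(1)$, $K_k$ is a fixed compact, and all error terms vanish as $R \to \infty$.
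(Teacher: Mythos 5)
Your proposal is correct and follows essentially the same route as the paper: select for each $\mu_i$ a generic point whose orbit segment over $S_{Rw_0}$ gives a $K$-bounded boxed map approximating $\mu_i$, then iteratively glue these with Corollary \ref{cor: Gluing boxes} along an increasing chain of compact neighborhoods, with pre-compactness coming from the basepoint lying in the fixed final compact set. The only (harmless) deviations are that you arrange the $K$-bounded endpoint via an $R$-dependent choice of generic point and an explicit padding block, whereas the paper fixes the generic points once and shrinks the box by $\rho_i = o_R(R)$ to land its far corner in $K$.
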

\begin{proof}
    Let $\cK$ be a compact set with $\nu_i(\cK)>0$ for each $i$, and for each $i=1,\dots,k$ choose $y_i\in \cK$ to be a generic point for the $A$ action on $\nu_i$, with the F\"olner sequence $\{S_{Rw_0}:R>0\}$.
    By ergodicity of $\nu_i$:
    \[\lim_{R\to \infty} \frac{1}{\on{vol}(S_{c_iRw_0})}\int_{S_{c_iRw_0}}\chi_{\exp(v)y_i\in \cK} \bd m_{\RR^{n-1}_0} = \nu_i(\cK) > 0.\]
    Denote \[\rho_i = \sup\{\rho>0: \exp(v)y_i \notin \cK, \forall v\in c_iRw_0 - S_{\rho w_0}\}.\]
    Since $\int_{c_iRw_0 - S_{\rho w_0}}\chi_{\exp(v)y_i\in \cK}\bd m_{\RR^{n-1}_0} = 0$,
    we get that $\frac{\rho_i}{R}\xrightarrow{R\to \infty}0$.
    By the Definition of $\rho_i$, there is $v_{R, i}\in c_iRw_0 - S_{\rho_iw_0}$ such that $\exp(v_{R, i})y_i \in \cK$. 
    Since $\abs{\|v_{R, i}\|-c_iR} = O(\rho_i) = o_R(R)$ we deduce that 
    \[\frac{1}{\on{vol}(S_{v_{R, i}})}\left(v\mapsto \exp(v)y_i\right)_*m_{\RR^{n-1}_0}\mid_{S_{v_{R, i}}}\xrightarrow{R\to \infty}\nu_i.\]
    The corollary follows now from iteratively applying Corollary \ref{cor: Gluing boxes} to glue the boxed maps $(v\mapsto \exp(v)y_i)|_{S_{v_{R, i}}}$ to one boxed map $f$, using an increasing set of compact neighborhoods $\cK\subset \cK_1\subset \cK_2\subset\dots\subset \cK_k$. 
\end{proof}
As a corollary of Lemma \ref{lem: fastApproximation2} and Corollary \ref{cor: boxes approximation} we can prove Theorem \ref{thm: ergodicDecomp}:
\begin{proof}[Proof of Theorem \ref{thm: ergodicDecomp}]
    To prove Theorem \ref{thm: ergodicDecomp}, together with the bounds predicted in Remark \ref{rem: uniformRN}, it is sufficient to prove them with finitely many measures. 
    Let $\nu_1,\nu_2,\dots, \nu_k$ be ergodic $A$-invariant measures on $X_n$, and $(c_i)_{i=1}^k$ positive numbers with $c_1+\dots+c_k=1$. 
    For any $R>0$, let $x_R\in X_n$ be the point guaranteed by Corollary \ref{cor: boxes approximation}. 
    Let $\cK$ the compact closure of $\{x_R: R>0\}$.

    Lemma \ref{lem: fastApproximation2} 
    guarantees a sequence $(t_m)_{m=0}^\infty$ such that $t_m \xrightarrow{m\to \infty}\infty$ such that for all $R>0, m\ge 0$ there is $y_{R,m} \in X_n$ such that $d_{X_n}(x_R, y_{R,m}) = O(e^{-\alpha t_m}),$
    where $\alpha = \frac{\lfloor n/2\rfloor}{4n(n^2-1)}>0$, and
    $y_{R,m}$
    is stabilized by a subgroup $\exp(\Lambda_{R, m})\subseteq A$ where $\Lambda_{R, m}\subseteq \RR^{n-1}_0$ is generated by vectors $(v^{(i)}_{R,m})_{i=1}^{n-1}$ such that $\left|\left( v_{R,m} ^ {(i)} \right)_j - (1 - n\delta_{ij}) t_m\right| = O(1)$
    Choose $R_m = \frac{\alpha t_m}{n-1} - \sqrt{t_m}$ so that all $v\in S_{R_mw_0}$ we have $d_{X_n}(\exp(v)x_{R_m}, \exp(v)y_{R_m,m}) = o_m(1)$. 
    This implies that 
    \begin{equation}\label{eq: chunks in Ay close to mu_i}
		\lim_{m\rightarrow \infty}\frac{1}{\on{vol}(S_{R_m,i})}\left(v\mapsto \exp\left(v\right).y_{R_m,m}\right)_*m_{\RR^{n-1}_0}\mid_{S_{R_m,i}}=\nu_i,
	\end{equation}
    where $S_{R_m,i}$ are as in Corollary \ref{cor: boxes approximation}.
    The choice of $R_m$ guarantees that $S_{Rw_0}$ injects into $\RR^{n-1}_0/\Lambda_{R_m,m}$ and hence Eq. \eqref{eq: chunks in Ay close to mu_i} implies that 
    \begin{align*}
        \lim_{m\to \infty} \mu_{A y_{R_m,m}}\ge
        \mu \cdot \lim_{m\to \infty}\frac{\on{vol}(S_{R_m,i})}{\cov{\Lambda_{R_m,m}}},
    \end{align*}
    and explicit computation shows that 
    \[
        \lim_{m\to \infty}\frac{\on{vol}(S_{R_m,i})}{\cov{\Lambda_{R_m,m}}} = \left(\frac{\lfloor n/2\rfloor}{4n^2(n-1)^2(n+1)}\right)^{n-1} c_i^{n-1}.
    \]
\end{proof}

\section{Entire Mass Approximations}
\label{sec: entire mass aprox}
First, we prove Corollary \ref{cor: tailoredEscape} of Theorem \ref{thm: ergodicDecomp} and \cite[Theorem 1.1]{ShapiraEscape}.
\begin{proof}[Proof of Corollary \ref{cor: tailoredEscape}]
	Let $\nu_1,\dots,\nu_k,\ldots\in \einv$. By ~\cite[Theorem 1.1]{ShapiraEscape} we can find 
    $(\rho_m)_m\subset \cinv$ such that $\rho_m\rightarrow 0$ weakly. 
    Let $c_0, c_1,c_2,\ldots$ be a sequence of positive real numbers with $\sum_{i=0}^\infty c_i = 1$.
    By Remark \ref{rem: uniformRN} of Theorem \ref{thm: ergodicDecomp}, for any $m$ there exists $\mu^{(m)}\in \overline \cinv$ such that $\mu^{(m)} \ge \Theta(c_0^{n-1})\rho_m, \Theta(c_i^{n-1})\nu_i$ for each $i\ge 1$.
    Any weak limit $\mu^{(\infty)}$ of $(\mu^{(m)})_m$ will belong to $\overline \cinv$ and satisfy, since $\rho_m\rightarrow 0$, that at least $\Theta(c_0^{n-1})$ of the mass of $\mu^{(m)}$ escapes, namely $\mu^{(\infty)}(X_n)\leq 1-\Theta(c_0^{n-1})$. On the other hand, for all $i\ge 1$, $\nu_i$ will still appear in the ergodic decomposition of $\mu^{(\infty)}$ with coefficient bounded below by $\Theta(c_i^{n-1})$ as desired.
\end{proof}
The remainder of this section will be dedicated to the proof of Theorem \ref{thm: haarApproximation}. The proof of this theorem will follow similar lines as the proof of Theorem \ref{thm: ergodicDecomp}: We first introduce the desired results on Hecke orbits, then apply it to the special number fields from \cite{ShapiraEscape}
 to complete the proof.
\subsection{Distribution of Hecke neighbors of points close to the cusp}

The following corollary analyzes the distribution properties of Hecke operators where we begin with a lattice high up in the cusp.
For that, we will use the qualitative version of Theorem \ref{thm: equidistributionOfHecke} (See also \cite[Th{\'e}or{\`e}me 1.2]{clozel2004equidistribution}). 
\begin{corollary}\label{cor: equidistribution Of Hecke weak}
    Fix $x\in X_n$. For a prime $p$, $k_1\le k_2\le \dots\le k_n, a_{p; k_1, k_2, \dots, k_n}\in \on{SL}_n(\RR)$ as in Definition \ref{def: Hecke} we have:
    \begin{equation}
            T_{a_{p; k_1, k_2, \dots, k_n}}^{\rm M}(x)\xrightarrow{(k_n-k_1)\log p\to\infty} m_{X_n}. 
    \end{equation}
\end{corollary}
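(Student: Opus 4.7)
The plan is to establish weak-$*$ convergence against test functions. For any $f\in C_c(X_n)$ we have $\int f\, dT^{\rm M}_a(x) = T^{\rm F}_a(f)(x)$, so it suffices to show $T^{\rm F}_a(f)(x)\to\int f\, dm_{X_n}$ as $k_n-k_1\to\infty$. Writing $f=\bar f+f_0$ with $\bar f=\int f\, dm_{X_n}$ and $f_0\in L^2_0(X_n)$, and noting that $T^{\rm F}_a$ fixes constants and preserves $L^2_0(X_n)$, the task reduces to $T^{\rm F}_a(f_0)(x)\to 0$. By Theorem \ref{thm: equidistributionOfHecke}, the summand $k_n-k_1$ alone inside $\sum_{i\le n/2}(k_{n+1-i}-k_i)$ forces the exponential factor $p^{-(k_n-k_1)/2}$ to dominate the polynomial factor $C(k_1,\dots,k_n)$, so $\|T^{\rm F}_a|_{L^2_0(X_n)}\|\to 0$.

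The main obstacle is to upgrade this $L^2$-operator bound to pointwise control at the specific point $x$. The plan is to smear $\delta_x$ over a small ball $B(x,r)$ with $r<\on{inj}(x)$, relying on two observations: (i) the Hecke correspondence satisfies $T_a(gy)=g\cdot T_a(y)$ for $g\in\on{SL}_n(\RR)$, so with respect to the right-invariant metric on $X_n$ the operator $T^{\rm F}_a$ cannot increase the Lipschitz constant of a function (match the two Hecke orbits bijectively via $x'\mapsto gx'$, and observe that each pair $(gx',x')$ has distance $d(g,I)$); and (ii) the centered indicator $\chi^0 := \chi_{B(x,r)}-\on{vol}(B(x,r))$ lies in $L^2_0(X_n)$ with $L^2$-norm at most $\sqrt{\on{vol}(B(x,r))}$. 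By density of Lipschitz functions in $C_c(X_n)$ in uniform norm, together with $\|T^{\rm F}_a f\|_\infty\le\|f\|_\infty$, one may restrict to Lipschitz $f_0\in L^2_0(X_n)$ with Lipschitz constant $L$.

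For such $f_0$, observation (i) gives
\[
\left|T^{\rm F}_a(f_0)(x)-\frac{1}{\on{vol}(B(x,r))}\int_{B(x,r)}T^{\rm F}_a(f_0)\,dm_{X_n}\right|=O(Lr),
\]
while observation (ii) and Cauchy--Schwarz applied to $T^{\rm F}_a(f_0)\in L^2_0(X_n)$ yield
\[
\left|\int_{B(x,r)}T^{\rm F}_a(f_0)\,dm_{X_n}\right|=\bigl|\langle\chi^0,T^{\rm F}_a(f_0)\rangle\bigr|\le\sqrt{\on{vol}(B(x,r))}\cdot\|T^{\rm F}_a|_{L^2_0(X_n)}\|\cdot\|f_0\|_{L^2}.
\]
Choosing $r=\|T^{\rm F}_a|_{L^2_0(X_n)}\|^\alpha$ for a fixed $\alpha\in(0,2/(n^2-1))$ drives both error terms to $0$, since $\on{vol}(B(x,r))$ is of order $r^{n^2-1}$ for small $r$ and eventually $r<\on{inj}(x)$. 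This gives the desired pointwise convergence and hence the weak-$*$ convergence $T^{\rm M}_a(x)\to m_{X_n}$.
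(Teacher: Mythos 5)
Your proposal is correct, and it supplies an argument the paper does not actually write out: the authors simply assert that the corollary ``follows from Theorem \ref{thm: equidistributionOfHecke}'' and cite Clozel--Ullmo for the original statement. Your deduction is the standard one, and notably it uses exactly the same smearing device as the paper's own proof of Lemma \ref{lem: heckeApproximation} (test against the centered indicator of a small ball of radius $r<\on{inj}(x)$, apply Cauchy--Schwarz with the $L^2_0$ operator norm, and balance $r$ against $\on{vol}(B(x,r))\asymp r^{n^2-1}$); your Lipschitz-contraction observation via $T_a(gy)=gT_a(y)$ is the clean way to convert the ball average back to the value at $x$. Two small points worth tightening. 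First, to see that $\|T^{\rm F}_a|_{L^2_0}\|\to 0$ as $k_n-k_1\to\infty$ you should quote the \emph{product} form of the bound in Theorem \ref{thm: equidistributionOfHecke}: each factor $p^{-(k_{n+1-i}-k_i)/2}\bigl((k_{n+1-i}-k_i)(p-1)+(p+1)\bigr)/(p+1)$ is at most $1$, and the $i=1$ factor tends to $0$ since the exponential in $k_n-k_1$ beats the linear term; the cruder form with ``$C(k_1,\dots,k_n)$ polynomial in the $k_i$'' does not by itself rule out the other $k_i$ growing independently (though in fact $C$ depends only on the differences $k_{n+1-i}-k_i\le k_n-k_1$). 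Second, in the Cauchy--Schwarz step you implicitly use that $T^{\rm F}_a$ preserves $L^2_0(X_n)$, so that $\langle\chi_{B(x,r)},T^{\rm F}_a(f_0)\rangle=\langle\chi^0,T^{\rm F}_a(f_0)\rangle$; this is true (the Hecke operator preserves the Haar measure and fixes constants) and is already asserted in Definition \ref{def: Hecke}, but it deserves a word.
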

\begin{remark}
    The convergence is uniform on compact sets. This follows from the fact that $T_a(gx) = gT_a(x)$ for all $g\in \on{SL}_n(\RR)$, and the result for the particular case $x=\ZZ^n\in X_n$. 
\end{remark}
Recall the definition of Minkowski's successive minima $\lambda_i(x)$ for $x\in X_n, i=1,\dots,n$ from \cite[Chapter VIII]{cassels2012introduction}. 
They satisfy that a closed set $U\subseteq X_n$ is compact if and only if $\inf_{x\in U}\lambda_1(x)>0$ if and only if $\sup_{x\in U} \lambda_n(x) < \infty$.

\begin{theorem}\label{thm: Hecke in the cusp}
    For every prime number $p$, denote $a_p = a_{p; 0,1,1,\dots,1}$ as in Definition \ref{def: Hecke}. 
    For every point $x\in X_n$ and $x'\in T_{a_p}(x)$ we have
    \begin{align}\label{eq: stays in the cusp}
        p^{1/n}\lambda_{1}(x) \ge \lambda_{1}(x')
    \end{align}
    In addition,
    for every $\delta>0$ there exists a compact set $\cK(\delta)$ such that the following holds.
    Let $(x_i)_{i=1}^\infty$ be a sequence of lattices and let $p_i\xrightarrow{i\to \infty}\infty$ be a prime sequence. 
    Assume that 
    \begin{align}\label{eq: condition of nondivergence}
      p_i^{\frac{1}{n}}\lambda_1(x_i) \ge 1.
    \end{align}
    Then 
    \begin{align}\label{eq: nondivergence of T_{k_i}(x_i)}
        \liminf_{i\to \infty} T^{\rm M}_{a_{p_i}}(x_i)(\cK(\delta)) > 1-\delta.
    \end{align}
\end{theorem}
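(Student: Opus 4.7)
The plan is to use the explicit combinatorial description of the Hecke orbit under $a_p$ and then bound the proportion of neighbors lying in the cusp. Observe that $a_p$ corresponds in Definition \ref{def: Hecke} to $k_1=0,\,k_2=\dots=k_n=1$, so the neighbors of $x$ are precisely $p^{-(n-1)/n}\pi^{-1}(L)$, where $\pi:x\to x/px$ is the reduction map and $L$ ranges over the one-dimensional $\FF_p$-subspaces of $x/px$; the total number of such lines is $(p^n-1)/(p-1)\sim p^{n-1}$. The first inequality is then immediate: if $v\in x$ realizes $\lambda_1(x)$, then $pv\in px\subseteq \pi^{-1}(L)$ for every line $L$, so after normalization $p^{1/n}v$ lies in every neighbor $x'$, whence $\lambda_1(x')\le p^{1/n}\lambda_1(x)$.

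For the second part, fix $\delta>0$ and choose $\epsilon\in(0,1)$ to be determined later. Set $C(\delta):=\{y\in X_n:\lambda_1(y)\ge\epsilon\}$, which is compact in $X_n$ by Mahler's criterion. A neighbor $x'=p_i^{-(n-1)/n}\pi^{-1}(L)$ fails to lie in $C(\delta)$ if and only if there is a nonzero $w\in\pi^{-1}(L)$ with $\|w\|<r:=\epsilon p_i^{(n-1)/n}$. Under the hypothesis $\lambda_1(x_i)\ge p_i^{-1/n}$, such a witness $w$ cannot lie in $p_ix_i$: writing $w=p_iw_0$ with $w_0\in x_i$ nonzero would force $\|w_0\|<\epsilon p_i^{-1/n}\le\epsilon\lambda_1(x_i)$, contradicting $\epsilon<1$. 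Hence $\pi(w)\ne 0$ and $L=\FF_{p_i}\cdot\pi(w)$ is determined by $w$; consequently, the number of bad lines is at most $|B_i|$, where $B_i:=\{w\in x_i:0<\|w\|<r\}$.

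It remains to count $|B_i|$. The standard lattice-point bound gives $|B_i|\le C_n r^k/(\lambda_1(x_i)\cdots\lambda_k(x_i))$, where $k$ is the number of successive minima of $x_i$ not exceeding $r$. Combining Minkowski's second theorem $\lambda_1(x_i)\cdots\lambda_n(x_i)\asymp 1$ with the lower bound $\lambda_1(x_i)\ge p_i^{-1/n}$, we deduce $\lambda_j(x_i)\le p_i^{(n-1)/n}$ for every $j$. Substituting into the counting bound, a short computation yields $|B_i|\lesssim \epsilon^k p_i^{n-1}\le C_n'\epsilon\, p_i^{n-1}$ for $k\ge 1$ (and $B_i=\emptyset$ when $k=0$). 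Dividing by the number of lines, the $T^{\rm M}_{a_{p_i}}(x_i)$-mass of the complement of $C(\delta)$ is at most $C_n''\epsilon$, and we finish by setting $\epsilon=\delta/(2C_n'')$.

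The principal technical point is the lattice-point estimate in the last paragraph: one must carefully balance the successive-minima counting bound $r^k/(\lambda_1\cdots\lambda_k)$ against the two constraints $\lambda_1(x_i)\cdots\lambda_n(x_i)\asymp 1$ and $\lambda_1(x_i)\ge p_i^{-1/n}$ to extract a bound linear in $\epsilon$. Everything else is essentially soft, coming from the observation that short vectors in $x_i$ are scarce and each forces at most one line in $\FF_{p_i}^n$ to be bad.
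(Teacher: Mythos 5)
Your proposal is correct and follows essentially the same route as the paper: the first inequality comes from the inclusion $p^{1/n}x\subseteq x'$, and the nondivergence statement comes from a union/first-moment bound in which each bad line is witnessed by a short vector of $x_i$ outside $p_ix_i$, counted via the successive-minima lattice-point estimate together with Minkowski's second theorem. Your phrasing (counting bad lines directly and dividing by $(p^n-1)/(p-1)$) is an equivalent reformulation of the paper's expectation computation $\mathbb{E}(\#(x'\cap B(r)\setminus\{0\}))$, so there is nothing substantive to add.
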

\begin{remark}\label{rem: can do equidistribution}
    Although we state no-escape-of-mass in Eq. \eqref{eq: nondivergence of T_{k_i}(x_i)}, one can prove an equidistribution result:
    If we replace Eq. \eqref{eq: condition of nondivergence} by 
    \begin{align}\label{eq: condition of nondivergence2}
      p_i^{\frac{1-\varepsilon}{n}}\lambda_1(x_i) \ge 1.
    \end{align}
    for some $\varepsilon>0$ we get that
    \begin{align*}
        T^{\rm M}_{a_{p_i}}(x_i)\xrightarrow{i\to \infty} m_{X_n}.
    \end{align*}
    This result could simplify the proof of Theorem \ref{thm: haarApproximation}, but its proof is too complicated and can be avoided.
\end{remark}
\begin{remark}\label{rem: extension for other hecke}
  With Eq. \eqref{eq: condition of nondivergence2} instead of \eqref{eq: condition of nondivergence}, Theorem \ref{thm: Hecke in the cusp} could be extended for the Hecke operators $T_{a_{p_i}^{k_i}}$, with $p_i^{k_i}$ instead of $p_i$. 
  For different Hecke operators, there are other thresholds, stated in terms of different Minkowski successive minima. The equidistribution result holds as well.
\end{remark}
\begin{proof}[Proof of Theorem \ref{thm: Hecke in the cusp}]
  The bound on $\lambda_1$ for every Hecke neighbor follows from the definition of Hecke operators.
  Since $x\subset p^{-1/n}x'$ we get $\lambda_{1}(x') \le p^{1/n}\lambda_{1}(x)$.

  Let $x\in X_n$ be a lattice with $\lambda_1(x) \ge p^{-\frac{1}{n}}$ and $x'$ be a random point in $T_{a_p}(x)$. 
  We show that for every $\delta>0$ there is a compact set $\cK\subset X_n$ depending only on $\delta$ such that 
  $\PP(x'\in \cK) \ge 1-\delta$ provided that $p$ is sufficiently large as a function of $\delta$. 

  Let $\cK_r = \{y\in X_n:\lambda_1(y)\ge r\}$ for some $1>r>0$ which will be chosen later as a function of $\delta$.
  To bound $\PP(x'\notin \cK_r)$, we will bound: 
  \begin{align*}
      \EE(\# (x'\cap B(r)\setminus \{0\}))\ge \PP(x'\notin \cK_r). 
  \end{align*}
  Here $B(r)$ is the radius $r$ ball in $\RR^n$.
  Since $p^{(n-1)/n}x' \subseteq x$, we have:
  \begin{align}\label{eq: additivity of expectation}
      \EE(\#(x'\cap B(r)\setminus \{0\})) = \sum_{v\in B(r)\cap p^{-(n-1)/n}x\setminus \{0\}}\PP(v\in x').
  \end{align}
  To analyze the probability $\PP(v\in x')$ note that  $px \subseteq p^{(n-1)/n}x'$ and satisfies
  $p^{(n-1)/n}x'/px = \ZZ/p$. 
  In particular, if $v\in p^{1/n}x$ then $\PP(v\in x') = 1$. 
  Otherwise, if $v\in (p^{-(n-1)/n}x\setminus p^{1/n}x)\cap x'$ then $v$ determines $x'$ by the formula $p^{(n-1)/n}x' = px + w\ZZ$, and hence 
  \[\PP(v\in x') = \frac{1}{\#T_{a_p}(x)} = \frac{p-1}{p^n-1}<\frac{1}{p^{n-1}}.\]
  Using these estimates we get:
  \begin{align}\label{eq: prob bound by division by p}
    \sum_{v\in B(r)\cap p^{-(n-1)/n}x\setminus \{0\}}\PP(v\in x') 
    \le \sum_{v\in B(r)\cap p^{-(n-1)/n}x\setminus p^{1/n} x}\frac{1}{p^{n-1}} + \sum_{v\in B(r)\cap p^{1/n}x\setminus \{0\}} 1\\
    \nonumber
    \le \frac{\#(B(p^{(n-1)/n}r)\cap x)}{p^{n-1}} + \#(B(p^{-1/n}r)\cap x)
    .
  \end{align}
  The assumption on $\lambda_1(x)$ implies that the second term in the right-hand side of Eq. \eqref{eq: prob bound by division by p} vanishes.
  To bound the first term, we need to analyze $\#B(p^{(n-1)/n}r) \cap x$. 
  The following claim is a different wording of \cite[Lemma 3.5]{LSST}. 
  \begin{claim}
    For every lattice $x$, $R > \lambda_1(x)$ we have 
    \[\#\left( B(R)\cap x\setminus 0 \right)  \asymp \max_{i=1}^n \frac{R^i}{\lambda_1(x)\cdots \lambda_i(x)}\]
  \end{claim}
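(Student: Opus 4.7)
The plan is to use a basis of $x$ adapted to the successive minima and count directly for the lower bound, then argue inductively for the upper bound. I would first invoke the standard result (going back to Mahler) that $x$ admits a $\ZZ$-basis $v_1,\ldots,v_n$ with $\|v_i\|\asymp_n \lambda_i(x)$ for every $i$. I then identify the index achieving the maximum: let $k\in\{1,\ldots,n\}$ be the largest index with $\lambda_k(x)\le R$, which exists by the hypothesis $R>\lambda_1(x)$. The quantity $R^i/(\lambda_1(x)\cdots\lambda_i(x))$ is non-decreasing in $i$ for $i\le k$ and non-increasing for $i\ge k$, so this $k$ realizes the claimed maximum, namely $R^k/(\lambda_1(x)\cdots\lambda_k(x))$.

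For the lower bound, I would count integer combinations $w=\sum_{i=1}^k c_iv_i$ with $c_i\in\ZZ$ and $|c_i|\le \lfloor R/(n\|v_i\|)\rfloor$; by the triangle inequality each such $w$ lies in $B(R)$, and the number of nonzero such $w$ is $\gg \prod_{i=1}^k R/\lambda_i(x) = R^k/(\lambda_1(x)\cdots\lambda_k(x))$, as required.

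For the upper bound, I would induct on $n$, with the case $n=1$ immediate. For the induction step, write any $w\in x\cap B(R)$ uniquely as $w=\sum_{i=1}^n c_iv_i$ and consider the projection $\pi$ of $\RR^n$ onto the quotient by $\mathrm{span}(v_1,\ldots,v_{n-1})$. Minkowski reducedness forces $\|\pi(v_n)\|\gg \lambda_n(x)$, so the number of possible values of $c_n$ is $O(1+R/\lambda_n(x))$. For each admissible $c_n$, the remaining vector $w-c_nv_n$ lies in a translate of the sublattice $x_{n-1}:=\mathrm{span}_\ZZ(v_1,\ldots,v_{n-1})$ intersected with a ball of radius $O(R)$ in $\mathrm{span}(v_1,\ldots,v_{n-1})$. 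Applying the induction hypothesis to each such coset gives the count per coset as $O(\max_{i\le n-1} R^i/(\lambda_1(x)\cdots\lambda_i(x)))$.

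The main obstacle is ensuring that the total sum over the $O(1+R/\lambda_n(x))$ cosets does not overshoot the claimed maximum. Here I would split into two cases: if $R<\lambda_n(x)$, then $|c_n|=0$ is the only option, so the induction step immediately yields the maximum over $i\le n-1$, which coincides with the full maximum since the $i=n$ term is then subdominant; if $R\ge\lambda_n(x)$, then $R^n/(\lambda_1(x)\cdots\lambda_n(x))$ is the dominant term, and multiplying the inductive bound $R^{n-1}/(\lambda_1(x)\cdots\lambda_{n-1}(x))$ by $O(R/\lambda_n(x))$ gives precisely this, absorbing the coset sum. Combining the two cases completes the upper bound and the proof.
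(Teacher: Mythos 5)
Your argument is essentially correct, but it is worth noting that the paper does not prove this claim at all: it is dispatched by citing \cite[Lemma 3.5]{LSST}, so you have supplied a self-contained elementary proof where the paper relies on an external reference. Your route --- a reduced basis $v_1,\dots,v_n$ with $\|v_i\|\asymp_n\lambda_i(x)$, a box-counting lower bound over the first $k$ basis vectors where $k$ is the last index with $\lambda_k(x)\le R$, and an upper bound by induction on the rank via fibering over the coefficient $c_n$ --- is the standard way to prove such counting estimates, and it correctly identifies that the maximum is attained at $i=k$ because the ratio of consecutive terms is $R/\lambda_{i+1}$. Two small imprecisions should be patched. First, in the lower bound the boxes $|c_i|\le\lfloor R/(n\|v_i\|)\rfloor$ can be empty of nonzero points when $R/\lambda_i=O_n(1)$ for all $i\le k$; in that regime the right-hand side is itself $O_n(1)$ and the bound follows from the single nonzero vector of norm $\lambda_1(x)<R$, so you should state the count as $\prod_{i\le k}\max\left(1,\,cR/\lambda_i(x)\right)\gg_n R^k/(\lambda_1(x)\cdots\lambda_k(x))$ and handle the subtraction of the zero vector separately. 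Second, when $R<\lambda_n(x)$ it is not literally true that $c_n=0$ is forced, since the bound $\|\pi(v_n)\|\gg\lambda_n(x)$ carries a constant; what is true, and all you need, is that the number of admissible values of $c_n$ is $O(1+R/\lambda_n(x))=O(1)$, which still yields $O\bigl(\max_{i\le n-1}R^i/(\lambda_1(x)\cdots\lambda_i(x))\bigr)$ after summing over the cosets. With these adjustments the proof is complete, and it has the advantage over the paper's citation of making the dependence on $n$ and on the choice of norm explicit.
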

  By Minkowski's theorem (\cite[Chapter VIII, Theorem V]{cassels2012introduction}), $\lambda_1(x)\cdots \lambda_n(x) \asymp \cov{x}$.
  In particular, $\lambda_n(x) \ll p^{(n-1)/n}$. 
  Thus, 
  \begin{align}\label{eq: ball estimate}
    \#B(p^{(n-1)/n}r) \cap x &\le 
    \max_{i=1}^n \frac{\left( p^{(n-1)/n}r \right)^i}{\lambda_1(x)\cdots \lambda_i(x)} = 
    \prod_{i=1}^n \max\left( \frac{p^{(n-1)/n}r}{\lambda_i(x)}, 1\right)\\\nonumber&
    \asymp
    \prod_{i=1}^n \max\left( p^{(n-1)/n}r, \lambda_i(x)\right)
    \ll 
    p^{(n-1)/n}r \cdot (p^{(n-1)/n})^{n-1} = rp^{n-1},
  \end{align}
  provided that $r > \frac{C}{p^{(n-1)/n}}$ where $C = \max_{y\in X_n}\lambda_1(y)$.

  Altogether Eqs. \eqref{eq: additivity of expectation}, \eqref{eq: prob bound by division by p} and \eqref{eq: ball estimate} imply that
  \begin{align*}
    \PP(x'\notin \cK_r) \ll r.
  \end{align*} 
  For every $\delta > 0$ choose $r(\delta)>0$ sufficiently small so that we have $\PP(x'\in \cK_{r(\delta)})>1-\delta$ for all $p\ge \delta^{-n/(n-1)}$. 
\end{proof}

\begin{definition}[Shapira's orbit]\label{def: shapira's orbit}
Let $0 < \eta < \frac{1}{2n}$ fixed and $M>0$ be a size parameter. 
Let $0\le a_1< \dots< a_n \le M$ be integers with $a_{i+1}-a_i\ge\eta M$, say, $a_i = \lfloor iM/n \rfloor$. 
Let $P(z) = (z-a_1)(z-a_2)\cdots(z-a_n) - 1$ and $K = \QQ[\alpha]$ where $P(\alpha) = 0$. 
Let $x_{\ZZ[\alpha]}$ as in Definition \ref{def: compact orbit}. 	
\end{definition}
The theorem we state is an accumulation of results and computations given in Shapira \cite{ShapiraEscape}.
\begin{definition}
  For every $\pi\in S_n$ (the permutation group), denote by $F_\pi\subseteq \RR_{0}^{n-1}$ the set of vectors 
  $F_\pi = \{v\in \RR^{n-1}_0: v_{\pi(i)}\ge v_{\pi(i+1)} - 1: i=1,\dots,n\}$. 
  Here we use cyclic index notations and $\pi(n+1) = \pi(1)$. 
  The set $F_\pi$ is compact. 
  Note that $F_\pi$ is well defined for $\pi\in S_n/C_n$ where $C_n$ is the cyclic group of rotations. 
\end{definition}
We recall a result by Shapira, which analyzes a family of compact orbits and shows that they approximate in most of their volume parts of the orbit $A\ZZ^n$. 
\begin{theorem}[Shapira \cite{ShapiraEscape}]\label{thm: Shapira escape}
  In this theorem the constant in the $O$-notation depends on $\eta$. 
  There is a bound $M_0(\eta)$ such that for all $M>M_0(\eta)$ the following occurs.
  Let $x_{\ZZ[\alpha]}$ as constructed above. It is stabilized by $\exp\Lambda$ where $\Lambda$ is generated by $v_1,\dots,v_{n-1}, v_n$ satisfying 
  $v_1+\dots+v_n = 0$ and 
  \[\left|\left( v^{(i)} \right)_j - (2 - 2n\delta_{ij})\log M\right| = O(1).\]
  Moreover there is a finite collection of points $P_0 \subseteq \RR^{n-1}_0/\Lambda$ and a map $\pi: P_0\to S_n / C_n$ such that
  \[\frac{\on{vol}\left(\bigsqcup_{p\in P_0}p + (1-o_M(1))\log (M)F_{\pi(p)}\right)}{\on{vol}(\RR^{n-1}_0 / \Lambda)} = 1-o_M(1),\]
  and for all $p\in P_0, v\in (1-o_M(1))\log (M)F_{\pi}$ one has 
  \[d_{X_n}(\exp(p+v)x_{\ZZ[\alpha]}, \exp(v)\ZZ^n) = o_M(1).\]
\end{theorem}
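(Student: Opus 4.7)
The plan is to follow Shapira's original construction \cite{ShapiraEscape} closely. First, I locate the real roots of $P(z)=\prod_{i}(z-a_i)-1$: since $|P(a_i)|=1\ll |P'(a_i)|=\prod_{j\ne i}|a_i-a_j|=\Theta(M^{n-1})$ by the spacing assumption $a_{i+1}-a_i\ge \eta M$, a Taylor expansion at each $a_i$, exactly as in the proof of Theorem~\ref{thm: special field}, produces real roots $x_1<\cdots<x_n$ with $|x_i-a_i|=O(M^{-(n-1)})$. Thus $K=\QQ[\alpha]$ is totally real with real embeddings $\sigma_i:\alpha\mapsto x_i$. The identity $\prod_i(\alpha-a_i)=1$ shows that each $\alpha-a_i\in\ZZ[\alpha]^\times$, so $u_i:=(\alpha-a_i)^2$ are totally positive units with $\prod_i u_i=1$. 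For $j\ne i$ the estimate $\eta M\le|a_j-a_i|\le M$ together with $x_j=a_j+O(M^{-(n-1)})$ gives $\log|\sigma_j(u_i)|=2\log M+O(1)$, and the case $j=i$ then follows from $\sum_j\log|\sigma_j(u_i)|=0$, yielding $\log|\sigma_i(u_i)|=-2(n-1)\log M+O(1)$. These values match $(2-2n\delta_{ij})\log M+O(1)$ exactly, so the vectors $v^{(i)}=(\log|\sigma_j(u_i)|)_{j=1}^n$ generate a subgroup of $\log\Stab_A(x_{\ZZ[\alpha]})$ with the claimed sizes, establishing the first assertion.

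For the second assertion, the combinatorial data that records which basis of $\sigma(\ZZ[\alpha])$ becomes reduced at $\exp(v)x_{\ZZ[\alpha]}$ is indexed by $\pi\in S_n/C_n$. For each such $\pi$ I take the basis of $\ZZ[\alpha]$ given by $w_0^\pi=1$ and $w_k^\pi=\prod_{i=1}^k(\alpha-a_{\pi(i)})$ for $k=1,\ldots,n-1$. The $j$th coordinate $\sigma_j(w_k^\pi)$ equals $\prod_{i=1}^k(x_j-a_{\pi(i)})$: if $j\in\{\pi(1),\ldots,\pi(k)\}$ then one factor is $O(M^{-(n-1)})$ and the coordinate is negligible, otherwise all factors satisfy $|x_j-a_{\pi(i)}|=\Theta(M)$, giving a coordinate of size $\Theta(M^k)$. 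For $v\in F_\pi$ (after a shift $p$ corresponding to the unit chosen to realize this particular product ordering within $\ZZ[\alpha]$), the action of $\exp(v)$ renormalizes $\sigma(w_k^\pi)$ so that all coordinates have comparable size, and the matrix with these columns becomes a bounded $\on{SL}_n(\RR)$-perturbation of the scaled identity; equivalently, $\exp(p+v)x_{\ZZ[\alpha]}$ is $o_M(1)$-close to $\exp(v)\ZZ^n$ in $X_n$. The finite set $P_0\subseteq\RR^{n-1}_0/\Lambda$ collects these distinct product orderings modulo the $C_n$-symmetry generated by the cyclic shift $\pi\mapsto\pi\circ\text{shift}$, and an explicit volume computation shows that the scaled cones $(1-o_M(1))\log(M)F_{\pi(p)}$ placed at these base points tile the fundamental domain of $\Lambda$ up to a boundary region of relative measure $o_M(1)$.

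The main obstacle is the control of the pointwise closeness together with the boundary regions between adjacent cones. Near a face of $F_\pi$, a coordinate that was negligible begins to be comparable to the others and the approximation of $\exp(v)x_{\ZZ[\alpha]}$ by $\exp(v)\ZZ^n$ degrades. To obtain the volume bound I must show that the set of such problematic $v$ has relative measure $o_M(1)$: this uses the balanced structure of $\Lambda$ from the first assertion, since the fundamental domain has diameter $\Theta(\log M)$ while the boundary strips have width $o(\log M)$, controlled by the ratio between successive factors $x_j-a_{\pi(i)}$ along an edge of the cone. The pointwise bound on $d_{X_n}(\exp(p+v)x_{\ZZ[\alpha]},\exp(v)\ZZ^n)$ reduces to showing that $\exp(v)B\exp(-v)-I$ is small in operator norm, where $B$ is the change-of-basis matrix from the Vandermonde-type basis $\{\sigma(w_k^\pi)\}$ (suitably normalized) to the standard basis of $\ZZ^n$; the off-diagonal entries of $B$ are suppressed by explicit powers of $M$, and under the constraint $v\in(1-o_M(1))\log(M)F_\pi$ these powers dominate the $e^{v_i-v_j}$ coming from conjugation. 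This is the routine but lengthy bookkeeping step that constitutes the bulk of the original calculation in \cite{ShapiraEscape}.
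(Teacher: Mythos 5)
The paper does not actually prove this theorem---it is stated as ``an accumulation of results and computations given in Shapira \cite{ShapiraEscape}''---and your reconstruction follows exactly the construction of that source: the Cassels-type polynomial with well-spaced roots, the units $\alpha-a_i$ with $\prod_i(\alpha-a_i)=1$, and the decomposition of the fundamental domain into cells indexed by cyclic orderings of the factors. Your treatment of the first assertion (the stabilizer lattice and the estimates $\log|\sigma_j(u_i)|=(2-2n\delta_{ij})\log M+O(1)$) is complete and correct, modulo the unstated verification that $P$ is irreducible, which follows from the same norm-of-a-unit argument used in the paper's proof of Theorem \ref{thm: special field}. The second assertion is only outlined: the decisive quantitative step---showing that $\exp(v)B\exp(-v)-I$ is small in operator norm precisely for $v$ in $(1-o_M(1))\log(M)F_{\pi}$, and that these cells, suitably based at points of $P_0$, tile $\RR^{n-1}_0/\Lambda$ up to relative measure $o_M(1)$---is described but deferred as bookkeeping. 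Since the paper itself defers the entire statement to \cite{ShapiraEscape}, this is an acceptable level of detail for the present purpose, but be aware that this deferred computation, not the setup, is where the actual content of the theorem lives.
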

Denote by $\on{min-co}:\RR^{n-1}_0\to (-\infty, 0]$ the minimum of the coordinates function, and note that $\min_{F_\pi}(\on{min-co}) = -\frac{n-1}{2}$. 
This function is important since 
\begin{align}\label{eq: lambda n to max_coef}
    \lambda_1(\exp(v)\ZZ^n) = \exp(\on{min-co}(v))\quad\text{for all}\quad v\in \RR^{n-1}_0. 
\end{align}
Let $m_{F_\pi}$ be the uniform probability measure on $F_\pi$ for all $\pi\in S_n$. 
Note that $(\on{min-co})_*m_{F_\pi}$ is independent of $\pi\in S_n$. 
We use the following corollary of Shapira's result, which follows from Eq. \eqref{eq: lambda n to max_coef}:
\begin{corollary}\label{cor: quantitive lambda n escape}
    In the setting of Theorem \ref{thm: Shapira escape}, and for any $I=[a,b]\subset (-\infty,0]$
\begin{align*}
	\int_{X_n} \delta_{\frac{1}{\log M}\log\lambda_1(x')\in I}\bd\mu_{\ZZ[\alpha]}(x')=(\on{min-co})_*m_{F_1}(I)+o_M(1).
\end{align*}
\end{corollary}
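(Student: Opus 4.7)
The plan is to decompose $\mu_{\ZZ[\alpha]}$ via the structural description in Theorem \ref{thm: Shapira escape} and transfer the computation of $\log\lambda_1$ from points on the compact orbit to translates of $\ZZ^n$, where Eq.~\eqref{eq: lambda n to max_coef} applies directly. First, I would identify $\mu_{\ZZ[\alpha]}$ with the normalized Lebesgue measure on a fundamental domain of $\RR^{n-1}_0/\Lambda$ via the parametrization $v\mapsto \exp(v)x_{\ZZ[\alpha]}$. By Theorem \ref{thm: Shapira escape}, a $(1-o_M(1))$ proportion of this fundamental domain is covered by the disjoint translates $p + (1-o_M(1))\log(M)\,F_{\pi(p)}$ for $p\in P_0$; everything outside this union contributes at most $o_M(1)$ mass to any pushforward measure.

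On each such box, the theorem gives $d_{X_n}(\exp(p+v)x_{\ZZ[\alpha]},\exp(v)\ZZ^n) = o_M(1)$. A direct computation from the definition of $d_{X_n}$ via the operator norm shows that $|\log\lambda_1(y_1)-\log\lambda_1(y_2)| \ll d_{X_n}(y_1,y_2)$ whenever the distance is small, since a matrix of operator norm close to $I$ distorts the shortest-vector length multiplicatively by a bounded factor. Combining with Eq.~\eqref{eq: lambda n to max_coef}, I get
\[
    \log\lambda_1\bigl(\exp(p+v)x_{\ZZ[\alpha]}\bigr) = \on{min-co}(v) + o_M(1).
\]
Dividing by $\log M$ and using the positive homogeneity of $\on{min-co}$, this becomes $\on{min-co}(v/\log M) + o_M(1)$. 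After the change of variable $\tilde v = v/\log M$, the rescaled uniform measure on each box is supported on $(1-o_M(1))F_{\pi(p)}$, and its pushforward under $\on{min-co}$ converges as $M\to\infty$ to $(\on{min-co})_* m_{F_{\pi(p)}}$.

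Finally, each $F_\pi$ is obtained from $F_1$ by a coordinate permutation, and $\on{min-co}$ is invariant under coordinate permutations; hence $(\on{min-co})_* m_{F_\pi} = (\on{min-co})_* m_{F_1}$ for every $\pi\in S_n/C_n$. The boxes all carry equal mass (since the $F_\pi$ have the same volume), so averaging over $p\in P_0$ yields the limit $(\on{min-co})_* m_{F_1}$, with the $o_M(1)$ residual mass absorbed in the metric on measures. The main obstacle is in transferring the $d_{X_n}$-closeness to control of $\log\lambda_1$: the lattices involved are deep in the cusp, with $\lambda_1$ of order $M^{-\Theta(1)}$, so one must verify that the $o_M(1)$ error in $d_{X_n}$ induces only an $o_M(1)$ \emph{additive} error in $\log\lambda_1$ independent of cusp depth. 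Fortunately, the multiplicative distortion of $\lambda_1$ depends only on the operator norm of the conjugating $\on{SL}_n(\RR)$ element, so a distance $\varepsilon$ yields an $O(\varepsilon)$ additive error in $\log\lambda_1$, and the subsequent division by $\log M$ makes this vanish in the limit.
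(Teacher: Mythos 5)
Your argument is correct and is exactly the deduction the paper intends: the paper offers no written proof beyond asserting that the corollary follows from Theorem \ref{thm: Shapira escape} together with Eq.~\eqref{eq: lambda n to max_coef}, and your proposal fills in precisely those steps (box decomposition, uniform additive control of $\log\lambda_1$ under small $d_{X_n}$-perturbation, rescaling by $\log M$, and permutation-invariance of $\on{min\text{-}co}$ to reduce all $F_\pi$ to $F_1$). Your explicit attention to the fact that the multiplicative distortion of $\lambda_1$ is independent of cusp depth is the one point genuinely worth checking, and you handle it correctly.
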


\begin{proof}[Proof of Theorem \ref{thm: haarApproximation}]
  Let $c\in (0,1]$ and let $M>0$ be large enough. From now on, every measure in our construction will depend on $M$.
  Since $m_{F_1}$ is absolutely continuous with respect to Lebesgue, there is $\xi > 0$ such that $m_{F_1}(\on{min-co}^{-1}[-\xi, 0]) = c$. 
  Let $p$ be the first prime number $p>M^{n\xi}$. 
  By the Prime Number Theorem, $p = M^{\xi n + O(1/\log M)}$. 
  Let $q$ be the first prime number with $q>\log M$. Again by the Prime Number Theorem , $q = M^{O(\log \log M/\log M)}$. 
  Altogether,   
  \begin{align}\label{eq: pq size}
    pq  = M^{\xi n + O(\log \log M/\log M)}.
  \end{align}
  Let $a_p, a_q$ as in Theorem \ref{thm: Hecke in the cusp} constracted for $p,q$ respectively. 
  Let $x_{\ZZ[\alpha]}=x_{\ZZ[\alpha]}(M)$ be as in Definition \ref{def: shapira's orbit}.
  The measure \[\nu_0 = T^{\rm M}_{a_q}(T^{\rm M}_{a_p}(\mu_{Ax_{\ZZ[\alpha]}})),\]
  is supported on several compact orbits which we will now describe.  
  Consider the set 
  \[B = T_{a_q}(T_{a_p}(x_{\ZZ[\alpha]}))\subset X_n. \] 
  Define a partition
  \begin{equation}\label{eq: b-partition}
  		B = B_1\sqcup B_2 \sqcup \dots\sqcup B_r
  \end{equation}
  by saying that two points $y_0, y_1\in B$ are in the same set $B_i$ if $Ay_0=Ay_1$. 
  Note that $B$ is invariant under the $\stab_{A}(x_{\ZZ[\alpha]})$, and hence so are $B_i$.
  For every $x = ax_{\ZZ[\alpha]}\in Ax_{\ZZ[\alpha]}$ define $T_{B_i}(x) = aB_i$. 
  The element $a$ is well defined up to $\stab_{A}(x_{\ZZ[\alpha]})$, and since $B_i$ is invariant to this ambiguity, $T_{B_i}(x)$ is well defined.

  Define $T_{B_i}^{\rm M}(x) = \frac{1}{\#T_{B_i}(x)}\sum_{y\in T_{B_i}(x)}\delta_y$. 
  We deduce that \[\nu_0 = \sum_{i=1}^r \beta_i T_{B_i}^{\rm M}(\mu_{x_{\ZZ[\alpha]}}),\]
  for some $\beta_i>0$ with $\sum_{i=1}^r\beta_i = 1$ (see Remark \ref{rem: beta i} for further discussion on $(\beta_i)_{i=1} ^ r$). 
  We will now analyze $\nu_0$ and $T_{B_i}^{\rm M}(\mu_{x_{\ZZ[\alpha]}})$. 


  \textbf{Analysis of Hecke-like operators on different parts of the compact orbit:}
  Fix $\varepsilon>0$ to be chosen later.
  Distinguish two parts of $Ax_{\ZZ[\alpha]}$: 
  \begin{align}
      \label{eq: ax-def}
      (Ax_{\ZZ[\alpha]})^{-} = \{x'\in Ax_{\ZZ[\alpha]}: \lambda_1(x') \le \exp((-1 - \varepsilon) \xi\log M)\}\\
      \label{eq: ax+def}
          (Ax_{\ZZ[\alpha]})^{+} = \{x'\in Ax_{\ZZ[\alpha]}: \lambda_1(x') \ge \exp((-1 + \varepsilon) \xi\log M)\}.
  \end{align}
  Then by Corollary \ref{cor: quantitive lambda n escape} we get that $\mu_{\ZZ[\alpha]}((Ax_{\ZZ[\alpha]})^{-})  = 1-c + O(\varepsilon) + o_M(1)$ and 
  $\mu_{\ZZ[\alpha]}((Ax_{\ZZ[\alpha]})^{+}) = c + O(\varepsilon) + o_M(1)$.
  Consequently, we get that 
  \begin{align}\label{eq: everyting else is negligable}
      \mu_{\ZZ[\alpha]}(Ax_{\ZZ[\alpha]} \setminus ((Ax_{\ZZ[\alpha]})^{+}\cup (Ax_{\ZZ[\alpha]})^{-})) = O(\varepsilon) + o_M(1). 
  \end{align}

  \textbf{Analysis on $(Ax_{\ZZ[\alpha]})^{-}$:}
  We will now analyze the escape of mass for the measures $T_{B_i}^{\rm M}(\mu_{x_{\ZZ[\alpha]}})$. 
  For all $x'\in (Ax_{\ZZ[\alpha]})^{-}$, Theorem \ref{thm: Hecke in the cusp} implies that $(pq)^{1/n}\lambda_1(x') \ge \lambda_1(y')$ for all $y'\in T_{a_q}(T_{a_p}(x'))$. 
  By Eqs. \eqref{eq: ax-def} and \eqref{eq: pq size}, we get an upper bound of 
  \begin{align*}
    \lambda_1(y') \le M^{-\xi\varepsilon + O(\log\log M / \log M)}.
  \end{align*}
  Hence if $\varepsilon$ has a sufficiently slow decay rate as a function of $M$, say, $\varepsilon = \frac{1}{\log\log M}$, we get that for all $i=1,\dots,r$,
  \begin{align}\label{eq: - behavior}
    \int_{(Ax_{\ZZ[\alpha]})^-}\int\chi_{\mathcal{K}_\varepsilon}\bd(T^{\rm M}_{B_i}(x')) \bd\mu_{\ZZ[\alpha]}(x')
    =
    \int_{(Ax_{\ZZ[\alpha]})^-}\int\chi_{\mathcal{K}_\varepsilon}\bd T^{\rm M}_{a_q}(T^{\rm M}_{a_p}(x')) \bd\mu_{\ZZ[\alpha]}(x')
    = 0.
  \end{align}
  Here $\mathcal{K}_\varepsilon$ is defined as in Definition \ref{def: space of measures}.

  \textbf{Analysis on $(Ax_{\ZZ[\alpha]})^{+}$:}
  Let $x'\in (Ax_{\ZZ[\alpha]})^{+}$. By Eqs. \eqref{eq: ax+def} and the definition of $p$ we get that 
  \[\lambda_1(x') \ge p^{-(1-\varepsilon)/n}.\]
  By Theorem \ref{thm: Hecke in the cusp}, we get that $T^{\rm M}_{a_p}(x')$ has no escape of mass as $M \to \infty$.
  Note that here $x'$ may vary as a funciton of $M$.
  By Corollary \ref{cor: equidistribution Of Hecke weak}, we get that 
  \begin{align}\label{eq: equidistribution Taq Tapx'}
    T^{\rm M}_{a_q}(T^{\rm M}_{a_p}(x'))\xrightarrow{M\to \infty}m_{X_n}.
  \end{align}

  \begin{definition}[Spaces of measures and ergodic decomposition]
    Let $X_n^* = X_n\sqcup \{*\}$ denote the one point compactification of $X_n$. This is a compact metrizable space. Hence, by Prohorov's Theorem (See \cite{pinsky1969convergence}), the space of probability measures on $X_n^*$, namely $\mathcal{M}(X_n^*)$, is again a compact metrizable space. Denote the metric on it by $d_{\mathcal{M}(X_n^*)}$. 
    The space of $A$-invariant probability measures, denoted $\mathcal{M}(X_n^*)^A$ is a closed subset, hence again a compact metrizable space. 
    Again, by Prohorov's Theorem, the space of probability measures on $\mathcal{M}(X_n^*)^A$, namely $\mathcal{M}(\mathcal{M}(X_n^*)^A)$, is again a compact metrizable space.
    We interpret $m_{X_n}, \nu_0, T_{B_i}^{\rm M}(\mu_{Ax_{\ZZ[\alpha]}})$ as points in $\mathcal{M}(X_n^*)^A$. 
    Let $\omega = \sum_{i=1}^r \beta_i \delta_{T_{B_i}^{\rm M}(\mu_{Ax_{\ZZ[\alpha]}})} \in \mathcal{M}(\mathcal{M}(X_n^*)^A)$.
    This is the ergodic decomposition of $\nu_0$, and it satisfies $\int \bd\omega = \nu_0$.   
  \end{definition}

  In these terms, we will analyze the limiting behavior we got earlier. 
  Denote 
  \[
    \lim_{M\to \infty}\nu_0 = \vec \nu_0,\qquad \lim_{M\to \infty}\omega = \vec \omega. 
  \]
  Eq. \eqref{eq: - behavior} implies that $\omega$ is supported on measures giving mass at most $c+o_M(1)$ to $\mathcal{K}_\varepsilon$. 
  Taking $M$ to infinity we get that $\vec \omega$ is supported on measures giving mass at most $c$ to $X_n$, that is, giving mass at least $1-c$ to the additional point $*$. In a formula,
  \begin{align}\label{eq: escape of mass in limit}
    \vec \omega\left( \left\{ \mu \in \mathcal{M}(X_n^*):\mu(\{*\}) \ge 1-c \right\} \right) = 1.
  \end{align}

  Hence 
  \begin{align}\label{eq: int nu0 ge}
    (1-c)\delta_{*}\le \int \bd\vec \omega = \lim_{M\to \infty}\int \bd\omega = \vec \nu_0.
  \end{align}

  On the other hand, Eq \eqref{eq: equidistribution Taq Tapx'} implies that the part of $\nu_0$ coming from the application of $T_{a_q}^{\rm M}\circ T_{a_p}^{\rm M}$ to $(Ax_{\ZZ[\alpha]})^{+}$ equidistributes as $M\to \infty$. This implies that $\vec \nu_0 \ge cm_{X_n}$. 
  Together with \eqref{eq: int nu0 ge} we deduce that 
  \begin{align}
    \vec \nu_0 = cm_{X_n} + (1-c)\delta_{*}.
  \end{align}
  This is the ergodic decomposition of $\vec \nu_0$, which implies that $\vec \omega$, whose integral is $\vec \nu_0$, is supported on measures of the form $c'm_{X_n} + (1-c')\delta_{*}$ for some $c'$. By Eq. \eqref{eq: escape of mass in limit} we deduce that almost surely $c' \ge c$.
  Since the integral $\int \bd\omega =cm_{X_n} + (1-c)\delta_{*} $, we deduce that $\vec \omega = \delta_{cm_{X_n} + (1-c)\delta_{*}}$. 
  In particular, 
  \[\min_{i=1}^rd_{\mathcal{M}(X^*_n)}(T_{B_i}^{\rm M}(\mu_{Ax_{\ZZ[\alpha]}}), cm_{X_n} + (1-c)\delta_{*}) = \min_{\mu \in \supp(\omega)}d_{\mathcal{M}(X^*_n)}(\mu, \supp(\vec \omega)) \xrightarrow{M\to \infty} 0,\]
  as desired.
\end{proof}
\begin{remark}\label{rem: beta i}
  The constants $\beta_i$ can be computed to be $\frac{\#B_i}{\#B}$. However, this computation requires a disjointness result, namely, $T_{a_q}(y)\cap T_{a_q}(y') = \emptyset$ for all $y\neq y'\in T_p(x_{\ZZ[\alpha]})$. Nonetheless, this is not needed in the proof.
\end{remark}
\begin{remark}[Variants on the proof]\label{rem: one Hecke suffices and polynomial dependence in discriminant}
    \textbf{Using one Hecke operator:}
    In the proof above we use the composition of the Hecke operators $T_{a_q}^{\rm M} \circ T_{a_p}^{\rm M}$. Had we proved the stronger version of Theorem \ref{thm: Hecke in the cusp} as suggested in Remark \ref{rem: can do equidistribution}, we could use $T_{a_p}^{\rm M}$ directly.
    
    \textbf{Obtaining a lower bound on the volume:}
	We sketch a way to perturb the above proof to obtain a lower bound on the volume of the orbits which is polynomial in the discriminant. This lower bound will come from showing that many of the $B_i$'s defined in Eq. \eqref{eq: b-partition} can be made of polynomial size in the discriminant. 
    The $B_i$'s correspond to orbits of action of $\cO_K^{\times}$ on $\PP((\ZZ/pq\ZZ)^n)$. To see that the orbits can be made large, one can use the number field \href{https://en.wikipedia.org/wiki/Artin%27s_conjecture_on_primitive_roots}{Artin's Conjecture},
    which implies that typically one of the $B_i$'s is very large. However, we can sketch a rigorous argument ensuring that most of the $B_i$'s are large. For that, let $r$ be the smallest prime such that $r=1\mod 2n$  and choose the $a_i$'s in Definition \ref{def: shapira's orbit} such that $a_i\mod r$ are the roots of $x^n+1\mod r$. Moreover, choose the $a_i$'s such that $a_1\cdot a_2\cdots a_n\not\equiv(-1)^{n}\mod r^2$.
    This choice of $a_i$ ensures that the corresponding number field $K$ is totally ramified at $r$. This can be used to show that the units in $\cO_K^{\times}$ generate a group of size $\Theta(r^{nm})$ inside $(\cO_K/r^m\cO_K)^{\times}$ for all $m$. Now we replace the Hecke operators $T_{a_{p}}$ in the proof by $T_{a_{r}^m}$. Then, the lower bound on the orbit sizes implies the lower bound on the size of many $B_i$'s and therefore also the bound on the volumes.
\end{remark}
\bibliographystyle{plain}
\bibliography{BibErg}{}
\end{document}